\documentclass{amsart}
\usepackage{amscd}
\numberwithin{equation}{section}
%
%
\setlength{\unitlength}{1cm}

\usepackage{amsmath}
\usepackage{latexsym}
\usepackage{graphicx}
\usepackage{subfigure}
\usepackage{color}
\usepackage{psfrag}
\usepackage{amssymb,amsfonts,amstext,graphicx}
\usepackage[curve]{xypic}
\usepackage[all]{xy}


%
%

%
%

\let\cal\mathcal

\newcommand{\cO}{\mathcal{O}}
%
%

\def\id{\text{id}}

\def\Sym{\mathop{\text{\upshape{Sym}}}\nolimits}

\def\diag{\operatorname {diag}}
\def\Ext{\operatorname {Ext}}

\def\gkdim{\operatorname {GK dim}}
\def\gldim{\operatorname {gl\,dim}}

\DeclareMathOperator{\Aut}{Aut}

\newcommand{\asX}{\langle X \rangle}

\newtheorem{lemma}{Lemma}[section]
\newtheorem{proposition}[lemma]{Proposition}
\newtheorem{theorem}[lemma]{Theorem}

\newtheorem{question}[lemma]{Question}
\newtheorem{corollary}[lemma]{Corollary}

\newtheorem{lemmas}{Lemma}[subsection]

\newtheorem{fact}[lemma]{Fact}

\newtheorem{convention}[lemma]{Convention}

\theoremstyle{definition}

\newtheorem{example}[lemma]{Example}
\newtheorem{definition}[lemma]{Definition}

\newtheorem{notation}[lemma]{Notation}

\newtheorem{problem}[lemmas]{Problem}

{

\newtheorem*{sketchofproof}{Sketch of the proof}

}

\theoremstyle{remark}

\newtheorem{remark}[lemma]{Remark}

\newcommand{\Dcal}{\mbox{$\cal D$}}

\newcommand{\Fcal}{\mbox{$\cal F$}}

\newcommand{\Lcal}{\mbox{$\cal L$}}

\newcommand{\Ncal}{\mbox{$\cal N$}}

\newcommand{\Rcal}{\mbox{$\cal R$}}

\newcommand{\stu}{\mathbin{\natural}}

\newcommand{\la}{{\triangleright}}

\newdimen\uboxsep \uboxsep=1ex
\def\uboxn#1{\vtop to 0pt{\hrule height 0pt depth 0pt\vskip\uboxsep
\hbox to 0pt{\hss #1\hss}\vss}}

\def\uboxs#1{\vbox to 0pt{\vss\hbox to 0pt{\hss #1\hss}
\vskip\uboxsep\hrule height 0pt depth 0pt}}

\title[Noninvolutive YBE]{A combinatorial approach to noninvolutive set-theoretic solutions of the Yang-Baxter equation
} \keywords{Yang-Baxter, Braided sets, Quadratic sets, Quadratic
algebras} \subjclass{Primary 81R50, 16W50, 16S36,
16S37}
\thanks{The author was partially supported
by The Max Planck Institute for Mathematics (MPIM), Bonn, by ICTP (Trieste), and by Grant KP-06 N 32/1 of 07.12.2019
of the Bulgarian National Science Fund}
\author{Tatiana Gateva-Ivanova}
\address{American University in
Bulgaria, 2700 Blagoevgrad, Bulgaria, and Max Planck Institute for Mathematics, Vivatsgasse 7, 53111 Bonn, Germany} \email{tatyana@aubg.edu}


\begin{document}

\date{\today}
\begin{abstract}
We study noninvolutive set-theoretic solutions $(X,r)$ of the
Yang-Baxter equations in terms of the properties of the
canonically associated braided monoid $S(X,r)$, the quadratic
Yang-Baxter algebra $A= A(\textbf{k}, X, r)$ over a field
$\textbf{k}$ and its Koszul dual, $A^{!}$. More generally, we
continue our systematic study of \emph{nondegenerate quadratic sets}
$(X,r)$ \emph{and their associated algebraic objects}. Next we investigate
the class of (noninvolutive) square-free solutions $(X,r)$. It
contains the self distributive solutions (quandles). We make a
detailed characterization in terms of various algebraic and
combinatorial properties each of which shows the contrast between
involutive and noninvolutive square-free solutions. We introduce
and study a class of finite square-free braided sets $(X,r)$ of
order $n\geq 3$ which satisfy "\emph{the minimality condition}",
that is $\dim_{\textbf{k}} A_2 =2n-1$. Examples are some simple
racks of prime order $p$. Finally, we discuss general extensions
of solutions and introduce the notion of \emph{a generalized
strong twisted union of braided sets}. We prove that if $(Z,r)$ is
a non-degenerate 2-cancellative braided set splitting as a
generalized strong twisted union $Z = X\stu^{\ast} Y$, of
$r$-invariant subsets, then its braided monoid $S_Z$ is a
generalized strong twisted union $S_Z= S_X\stu^{\ast} S_Y$ of the
braided monoids $S_X$ and $S_Y$. We propose a construction of a generalized strong twisted
union $Z = X\stu^{\ast} Y$ of braided sets $(X,r_X)$, and $(Y,
r_Y)$, where the map $r$ has a high, explicitly prescribed order.
\end{abstract}
 \maketitle

\setcounter{tocdepth}{1}
\tableofcontents

\section{Introduction}
\label{Intro} It was established in the last three decades that
solutions of the linear braid or Yang-Baxter equations (YBE) on
a vector space of the form $V^{\otimes 3}$ lead to remarkable
algebraic structures. Here $r : V\otimes V \longrightarrow
V\otimes V$ , $r^{12}= r\otimes \id$, $r^{23} = id\otimes r$ is a
notation and structures include coquasitriangular bialgebras
$A(r)$, their quantum group (Hopf algebra) quotients, quantum
planes and associated objects, at least in the case of specific
standard solutions, see \cite{MajidQG, RTF}. On the other hand,
the variety of all solutions on vector spaces of a given dimension
has remained rather elusive in any degree of generality. It was
proposed by V.G. Drinfeld \cite{D}, to consider the same equations
in the category of sets, and in this setting numerous  results
were found. It is clear that a set-theoretic solution extends to a
linear one, but more important than this is that set-theoretic
solutions lead to their own remarkable algebraic and combinatoric
structures, only somewhat analogous to quantum group
constructions. In the present paper we continue our systematic
study of set-theoretic solutions based on the associated quadratic
algebras and  monoids that they generate.

More generally, we study quadratic sets and their algebraic
objects. The notions of a quadratic set $(X,r)$ and its related
algebraic objects were introduced by the author and studied first
in \cite{GI04}, see also \cite{GIM08, GI11} for more results on
quadratic sets $(X,r)$. We shall use the terminology,  notation
and some results from \cite{GI04, GI11, GIC, GI18, GIM08}.

\begin{definition}\cite{GI04}
\label{def:quadraticsets_All} Let $X$ be a nonempty set (possibly
infinite) and let $r: X\times X \longrightarrow X\times X$ be a
bijective map. In this case we use notation $(X, r)$ and refer to
it as \emph{a quadratic set}. The image of $(x,y)$ under $r$ is
presented as
\[
r(x,y)=({}^xy,x^{y}).
\]
This formula defines a ``left action'' $\Lcal: X\times X
\longrightarrow X,$ and a ``right action'' $\Rcal: X\times X
\longrightarrow X,$ on $X$ as: $\Lcal_x(y)={}^xy$, $\Rcal_y(x)=
x^{y}$, for all $x, y \in X$. (i) $(X, r)$ is \emph{non-degenerate}, if
the maps $\Lcal_x$ and $\Rcal_x$ are bijective for each $x\in X$.
(ii) $(X, r)$ is \emph{involutive} if $r^2 = id_{X\times X}$. (iii)
$(X,r)$ is \emph{square-free} if $r(x,x)=(x,x)$ for all $x\in X.$
(iv) $(X,r)$ is \emph{quantum binomial} if it is nondegenerate,
square-free, and involutive. (v) $(X, r)$ is \emph{a set-theoretic
solution of the Yang--Baxter equation} (YBE) if  the braid
relation
\[r^{12}r^{23}r^{12} = r^{23}r^{12}r^{23}\]
holds in $X\times X\times X,$  where  $r^{12} = r\times\id_X$, and
$r^{23}=\id_X\times r$. In this case we  refer to  $(X,r)$ also as
\emph{a braided set}. A braided set $(X,r)$ with $r$ involutive is
called \emph{a symmetric set}.
\end{definition}

In this paper we always assume that $r$ is nondegenerate.
As a notational tool, we  shall often identify the sets $X^{\times
m}$ of ordered $m$-tuples, $m \geq 2,$  and $X^m,$ the set of all
monomials of length $m$ in the free monoid $\asX$.

As in our previous works (\cite{GI04, GI11, GIC, GI18, GIM08}) to
each quadratic set $(X, r)$  we associate canonically several
algebraic objects (see Definition \ref{def:algobjects}) generated
by $X$ and with quadratic defining relations naturally determined
as
\[
xy=y^{\prime} x^{\prime}\in \Re(r)\; \text{iff}\;
 r(x,y) = (y^{\prime}, x^{\prime})\; \text{and} \;
 (x,y) \neq (y^{\prime}, x^{\prime})\;\text{hold in}\;X \times
 X.\]
  Note that
in the case when X is finite, the set $\Re(r)$  of defining
relations is also finite, therefore the associated algebraic
objects are finitely presented.

We continue our systematic study of the close
relations between the combinatorial properties of the defining
relations, i.e. of the map $r$, and the structural properties of
the associated algebraic objects.

\begin{remark}
\label{rem:changes}
We thank Magdalena Wiertel  and Arne van Antwerpen who communicated the author an example of a finite non involutive square-free solution whose Yang-Baxter algebra is PBW and therefore Koszul, see Example \ref{ex:Arne} in Sec 3.
which demonstrates that \cite[Proposition 3.12, parts (1) and (2)]{GI21} is not true. 

The example inspired the corrections and improvements we made in the present most recent version of the paper \cite{GI21}.
 
A brief description of the changes follow.
Section 3 of the original version is revised. In particular, we replaced \cite[Remark 3.9]{GI21} with Subsection 3.3. which gives a (possibly new) set of reduced relations of the Yang-Baxter algebra $A= A(\textbf{k}, X, r)$ and  a set of correct explicit relations of the Koszul dual algebra $A^{!}$. We remark that parts (1) and (2) of \cite[Proposition 3.12]{GI21}  are not correct and erase this proposition. We give corrected and improved versions of the three results in the original paper 
\cite{GI21} whose proofs involved \cite[Proposition 3.12]{GI21}: these are Theorem 3.17 which is a corrected version \cite[Theorem 3.16]{GI21}, 
a corrected version of \cite[Theorem 5.5]{GI21} and its proof, see the present Theorem 5.5., and a corrected version of \cite[Corollary 5.6]{GI21}.
\end{remark}

In the first half of the paper we investigate non-degenerate
quadratic sets $(X,r)$ of finite order, their quadratic graded
algebras $A$, and the monoid $S(X,r)$. Section \ref{Preliminaries}
contains preliminary material on quadratic sets. In Section \ref{sec:quadratic} we study
non-degenerate quadratic sets $(X,r)$,  with 2-cancellation.
Proposition \ref{prop:mainthlemma1} provides upper and  lower
bounds for the dimension $\dim A_2$, and shows that the upper
bound is attained, whenever $r$ is involutive. The main result of
the section is Theorem \ref{thm:new_theorem}. It implies, in
particular, that a square-free non-degenerate quadratic set
$(X,r)$, with $|X|= n$ is a symmetric set if and only if its
quadratic algebra $A$ has Hilbert series $H_A(z)=
\frac{1}{(1-z)^n}$. The theorem improves an old result of the
author, see \cite{GI12}, Theorem 2. In Section
\ref{sec:cycliccondition} we pay special attention to square-free
quadratic sets with cyclic conditions. We find some new
combinatorial results, see Theorem \ref{thm:proposition1AS} and
use them to show that, surprisingly, a square-free quadratic set
$(X,r)$ of finite order $|X|=n$ which satisfies  the cyclic
conditions is a symmetric set if and only if
 $\dim_{\textbf{k}} A^{!}_3 = \binom{n}{3},$ see Proposition \ref{prop:A!_and_YBE}.
In Section \ref{sec:contrast} we study square-free braided sets
and the contrast between the involutive and noninvolutive cases.
We show that every square-free braided set (of arbitrary
cardinality) satisfies the cyclic conditions.  Theorem
\ref{mainth} characterizes the involutive braided sets $(X,r)$ in
terms of various equivalent properties of the algebra $A$, its
Koszul dual $A^{!}$ and the monoid $S(X,r)$. Corollary
\ref{mainthCor} provides a characterization of
\emph{noninvolutive} square-free braided sets. In Section
\ref{sec:minimalitycond} we introduce quadratic sets $(X,r)$ which
satisfy \emph{"the minimality condition"}, \textbf{M}, that is
$\dim_{\textbf{\textbf{k}}} A_2 =2n-1$, see Definition
 \ref{minimality_def}.
We first investigate (general) square-free 2-cancellative
quadratic sets $(X,r)$ with minimality condition and prove
Proposition \ref{prop:minimalityprop}. We make some initial steps
in the study of braided sets, and in particular, quandles
 with minimality condition, \textbf{M}.  Corollary
\ref{Mcorollary} implies that every square-free self distributive
solution,  $(X,r)$ (see Definition \ref{SDdef}), corresponding to
a dihedral quandle of prime order $|X|= p>2$ satisfies the
minimality condition \textbf{M}.
 In Sec. \ref{sec:specialext} we propose a construction
which generates noninvolutive extensions $(Z,r)$ of braided (or
symmetric) sets, where the map $r$ has high, explicitly prescribed
order, see Theorem \ref{theor:irregularext}. In Section
\ref{BraidedMonoidSec} are studied braided monoids $S(X,r)$ and
extensions of solutions. We consider "general" extensions of
braided sets.  In Subsec. \ref{subsec:StuBraidedsets} we introduce
"generalized strong twisted unions $Z = X\stu^{\ast} Y$ of
non-degenerate braided sets", see Definition \ref{STUgendef}. The
main result of the section is Theorem \ref{stuthm}.
Finally, in Section \ref{sec:questions} we give a list
of questions and problems. Some of these are still open questions, other were posed in earlier versions of our work and have
stimulated recent results of other authors.

\section{Preliminaries}
\label{Preliminaries}

During the last two decades the study of
set-theoretic solutions of the Yang-Baxter equation and related
structures notably intensified, a relevant selection of works for the
interested reader
is \cite{D, GIVB, ESS, LYZ, GI04, Carter, Catino, Rump, Takeuchi, GIM08, GI12, GIC, GI18, CJO14, V16,
GuaVendramin17,  BCV, LeVendramin18, SmokVendr18, Soloviev,  Kazhdan}, et all, and the references there in.
In this section we recall basic notions and
results which will be used in the paper.
 We shall use the terminology,  notation and some results from
\cite{GI04, GI11, GIC, GI18, GIM08}.

\begin{remark}
\label{rem_Invol-Sqfree}
Let $(X,r)$  be a  quadratic set, and let ${}^x\bullet$, and ${\bullet}^x$ be the associated left and right
actions. Then
\begin{enumerate}
\item
The map  $r$ is involutive   \emph{iff} the actions satisfy:
\begin{equation}
\label{involeq}
{}^{{}^uv}{(u^v)}= u, \; \text{and} \; ({}^uv)^{u^v} = v, \; \forall u, v \in X.
\end{equation}
\item $r$ is square-free if and only if $\; {}^xx=x$, and $x^x =
x, \; \forall\; x \in X$.
 \item If $r$ is nondegenerate and
square-free, then
\begin{equation}
\label{rightactioneq1}
\begin{array}{lllll}
{}^zt={}^zu &\Longrightarrow &t = u &\Longleftarrow & t^z= u^z\\
{}^zt = z&\Longleftrightarrow &t = z &\Longleftrightarrow &t^z =
z.
\end{array}
\end{equation}
\end{enumerate}
\end{remark}

\begin{remark} \label{ybe}
\cite{ESS}
Let $(X,r)$ be quadratic set.
Then $r$ obeys the YBE,
that is $(X,r)$ is a braided set {\em iff} the following conditions
hold for all $x,y,z \in X$:
\[
\begin{array}{lclc}
 {\bf l1:}\quad& {}^x{({}^yz)}={}^{{}^xy}{({}^{x^y}{z})},
 \quad\quad\quad
 & {\bf r1:}\quad&
{(x^y)}^z=(x^{{}^yz})^{y^z},
\end{array}\]
 \[ {\rm\bf lr3:} \quad
{({}^xy)}^{({}^{x^y}{z})} \ = \ {}^{(x^{{}^yz})}{(y^z)}.\]
\end{remark}

\begin{convention}
\label{conv:convention1} In this paper by "\emph{a solution}" we
mean "\emph{a non-degenerate braided set}" $(X,r)$, where $X$  is
a set of arbitrary cardinality. We shall also refer to it as
"\emph{a braided set}", keeping the convention that we consider
only non-degenerate braided sets. "\emph{An involutive solution}"
means "\emph{a non-degenerate symmetric set}". In most cases we
shall also assume that $r$ is 2-cancellative but this will be
indicated explicitly.
\end{convention}

\subsection{Quadratic sets and their algebraic objects}

Let $X$ be a non-empty set, and let $\textbf{k}$ be a field.
We denote by $\asX$,  and ${}_{gr}\asX$,
respectively the free monoid, and the free group
generated by $X,$ and by $\textbf{k}\asX$-
 the free associative $\textbf{k}$-algebra
generated by $X$. For a set $F
\subseteq \textbf{k}\asX$, $(F)$ denotes the two sided ideal
of $\textbf{k}\asX$, generated by $F$.

 For $m \geq 1$, the length of a monomial $u = x_1\cdots x_m \in X^m$
will be denoted by $|u|= m$.

As in our works \cite{GI04, GI04s,  GI11, GIM08, GIM11, GI18}, we
use the following.
\begin{definition}
\label{def:algobjects} To each quadratic set $(X,r)$ we associate
canonically algebraic objects generated by $X$ and with quadratic
relations $\Re =\Re(r)$ naturally determined as
\[
xy=y^{\prime} x^{\prime}\in \Re(r)\; \text{iff}\;
 r(x,y) = (y^{\prime}, x^{\prime})\; \text{and} \;
 (x,y) \neq (y^{\prime}, x^{\prime})\;\text{hold in}\;X \times X.
\]
 The monoid
$S =S(X, r) = \langle X ; \; \Re(r) \rangle$
 with a set of generators $X$ and a set of defining relations $ \Re(r)$ is
called \emph{the monoid associated with $(X, r)$}.
 The \emph{group $G=G(X, r)= G_X$ associated with} $(X, r)$ is
defined analogously.

For an arbitrary fixed field $\textbf{k}$,
\emph{the} \textbf{k}-\emph{algebra associated with} $(X ,r)$ is
defined as
\[\begin{array}{c}
A = A(\textbf{k},X,r) = \textbf{k}\langle X  \rangle /(\Re_0)
\simeq \textbf{k}\langle X ; \;\Re(r)
\rangle ,\;\;\text{where}\\
\Re_0 = \Re_0(r)
 = \{xy-y^{\prime}x^{\prime}\mid xy=y^{\prime}x^{\prime}\in \Re
(r) \}.
\end{array}
\]
Clearly, the quadratic algebra $A$ generated by $X$ and
 with defining relations  $\Re_0(r)$
is isomorphic to the monoid algebra $\textbf{k}S(X, r)$.
\end{definition}

\begin{definition}
\label{injectivedef}
We shall call a quadratic set  $(X,r)$ \emph{injective} if the set $X$ is embedded in $G(X,r)$.
\end{definition}

Recall that when $(X,r)$ is a braided set its monoid $S=S(X,r)$ is a graded braided monoid, \cite{GIM08}, and the group $G(X,r)$ is a braided group, \cite{LYZ}, see details
in section \ref{BraidedMonoidSec}.
Moreover, the associated quadratic algebra A = A(\textbf{k},X,r) is also called \emph{an Yang-Baxter algebra}, see \cite{Maninpreprint}.

\begin{remark}
\label{numberofrelationsrem}
\cite{GI11}, Proposition 2.3., If
$(X,r)$ is a nondegenerate and involutive quadratic set of finite order $|X| =n$
then the set $\Re(r)$
consists of precisely $\binom{n}{2}$ quadratic relations.
Clearly, in in this  case the associated algebra $A= A(\textbf{k}, X, r)$ satisfies
\[\dim A_2 = \binom{n+1}{2}.\]
Various equivalent conditions are given in Proposition \ref{prop:mainthlemma1}.
\end{remark}

\begin{remark}
\label{gradingRemark}
Suppose $(X,r)$ is a finite quadratic set.
Then $A$ is a quadratic algebra, generated by $X$ and
 with quadratic defining relations  $\Re(r).$
Clearly, $A$ is \emph{a connected
graded} $\textbf{k}$-algebra (naturally graded by length),
 $A=\bigoplus_{i\ge0}A_i$, where
$A_0=\textbf{k}$, $A$ is generated by $A_1=Span_{\textbf{k}}X,$ so
each graded component $A_i$ is finite dimensional.
Moreover, the associated monoid $S= S(X,r)$ \emph{is naturally graded by length}:
\[
 S = \bigsqcup_{m \geq 0}  S_{m}; \;\; \text{where}\;\;
S_0 = 1,\; S_1 = X,\; S_m = \{u \in S \mid\;   |u|= m \}, \; S_m.S_t \subseteq S_{m+t}. \]
In the sequel, by "\emph{a graded monoid}, $S$", we shall
mean that $S$ is generated  by $S_1=X$ and graded by length.
The grading of $S$ induces a canonical grading of its monoid algebra $\textbf{k}S(X, r).$
The isomorphism $A\cong \textbf{k}S(X, r)$ agrees with the canonical gradings, so there is an isomorphism of vector spaces $A_m \cong Span_{\textbf{k}}S_m$.
\end{remark}

\begin{remark}
\label{orbitsinG}
\cite{GI04s, GIM24} 
Let $(X,r)$ be a finite quadratic set. We define an equivalence relation on $X^2$ by
 \[xy \sim zt\; \text{if and only if}\; r^k(xy) = r^m(zt) \; \text{for some}\; m, k\geq 0.\]
 We define an $r$-orbit $\cO$ as an equivalence class with respect to this. Here $r^0$ denotes the identity map.

Clearly $X^2$ is a  disjoint union of $r$-orbits and we denote by 
$\cO(xy)$ the $r$-orbit containing $xy\in X^2$.  Also note that  the equality $xy=zt$ holds in $S=S(X,r)$ if and only if $xy\sim zt$, so each $r$-orbit corresponds to a distinct element of $S$ of grade 2.

 The relation
$\sim$ can be extended in a natural way to an equivalence relation on the free monoid $\asX$, such that
 $\sim$  is a congruence on $\asX$ which (by definition) satisfies:
 \[\text{if} \; u, v, a, b \in \asX, \text{ and}\;  u \sim v, \;\text{then}\;  aub \sim avb.\]
 It then follows from the definition that $S$ is isomorphic (as a monoid) to the quotient monoid modulo the congruence,
 $\asX /\sim$.
  In particular two words $w_1, w_2 \in \asX$ are equal in $S$ \emph{iff} $w_1$, and $w_2$ belong to the same congruence
  class in $\asX$.
\begin{enumerate}
\item
 There is a practical way given $w_1, w_2 \in \asX$, how to determine if $w_1 = w_2$ in $S$. Namely,
 $w_1, w_2 \in \asX$ are equal
 in $S$
  if they have equal lengths  $|w_1| = |w_2|\geq 2$ and there exists a monomial $w_0$ such that each  $w_i, i=1,2$ can
  be
  transformed to
  $w_0$ by a finite sequence of
  replacements (they are also called \emph{reductions} in the literature) each of the form
  \[a(xy)b \longrightarrow a(zt)b,\]
where $xy=zt$ is an equality in $S$, $xy>zt$  in $X^2$ and $a,b \in \asX$.  Clearly, every such replacement preserves monomial length, which therefore descends to $S(X,r)$. Furthermore, replacements coming from the defining relations are possible only
on monomials of length $\geq 2$, hence $X \subset S(X,r)$ is an inclusion.


\item When $r$ is involutive it is  convenient for each $m \geq 2$ to refer to the subgroup $D_m$ of the symmetric group $\Sym (X^m)$
generated concretely by the maps
\begin{equation}
\label{eq:r{ii+1}}
r^{ii+1}: X^m \longrightarrow X^m, \; r^{ii+1} =\id_{X^{i-1}}\times r\times \id_{X^{m-i-1}}, \; i = 1, \cdots, m-1.
\end{equation}
In the general case, we consider the free groups
\[\Dcal_m (r)=  \: _{\rm{gr}} \langle r^{i i+1}\mid i = 1, \cdots, m-1 \rangle,\]
where the
$r^{ii+1}$ are treated as abstract symbols, as well as various quotients depending on the further type of $r$ of interest.
 In particular, $\Dcal_2(r) = \langle r \rangle \subset \Sym (X^2)$
 is the cyclic group generated by $r$.
 It follows straightforwardly from part (1) that $w, w^{\prime} \in \langle X\rangle$ are equal as words in $S(X,r)$
  \emph{iff} they have the same length, say $m$, and belong to the same orbit of $\Dcal_m(r)$ in $X^m$. Clearly, in this case the equality $w=w^{\prime}$ holds in the group $G(X,r)$ and in the algebra $A(\textbf{k}, X, r)$.

An effective part of our combinatorial approach is the exploration of the actions of the groups
$\Dcal_2(r) = \langle r\rangle$ on $X^2$, the group $\Dcal_3 (r)=  \: _{\rm{gr}} \langle r^{12}, r^{23}\rangle$ on $X^3$,
and in particular, the properties of the corresponding orbits.  In the literature a $\Dcal_2(r)$-orbit $\cO$ in $X^2$ is often called
"\emph{an $r$-orbit}" and we shall use this terminology.

If  $r$ is involutive, the bijective maps $r^{12}$
and $r^{23}$ are involutive as well, so in this case $\Dcal_3(r)$ is
\emph{the infinite dihedral group},
\[
\Dcal_3(r)= \Dcal(r)=\: _{\rm{gr}} \langle r^{12}, r^{23}\mid (r^{12})^2= e,\quad  (r^{23})^2=e \rangle.
\]
\end{enumerate}
\end{remark}
\begin{remark}
 \label{dimA2rem}
 In notation and assumption as above, let $(X,r)$ be a finite quadratic set, $S=S(X,r)$ graded by length.
 Then the order of $S_2$ equals the number of $r$-orbits in $X^2$.
 \end{remark}
For positive integers $i<n$, the maps $r^{ii+1}: X^n \longrightarrow X^n$ are defined by (\ref{eq:r{ii+1}}).
Recall that the braid group $B_n$ is generated by elements $b_i,1 = i = n-1$, with defining relations
\begin{equation}
 \label{braidgrouprel}
 b_ib_j =b_jb_i, |i-j|> 1,\quad b_ib_{i+1}b_i = b_{i+1}b_ib_{i+1},
\end{equation}
 and the symmetric group $S_n$ is the quotient of $B_n$ by the relations $b_i^2 =1$.
 It is well known (and straightforward)  that for every $n \geq 3$ the following hold.
(i) The assignment $b_i \mapsto r^{ii+1}$ extends to a (left) action of $B_n$ on $X^n$ if and only if $(X,r)$ is a braided set.
(ii) The assignment $b_i \mapsto r^{ii+1}$ extends to an action of $S_n$ on $X^n$ if and only if $(X,r)$ is a symmetric set.

\section{Nondegenerate quadratic sets with 2-cancellation and their quadratic algebras}
\label{sec:quadratic}

\subsection{Basics on quadratic algebras}
 Our main reference for this subsection is \cite{PP}.

A quadratic  algebra is an associative graded algebra
 $A=\bigoplus_{i\ge 0}A_i$ over a ground field
 $\textbf{k}$  determined by a vector space of generators $V = A_1$ and a
subspace of homogeneous quadratic relations $R= R(A) \subset V
\otimes V.$ We assume that $A$ is finitely generated, so $\dim A_1 <
\infty$. Thus $ A=T(V)/( R)$ inherits its grading from the tensor
algebra $T(V)$. The Koszul dual algebra of $A$, denoted by $A^{!}$
is the quadratic algebra $T(V^{*})/( R^{\bot})$, see
\cite{Maninpaper, Maninpreprint}. The algebra  $A^{!}$ is also referred to as \emph{the quadratic dual algebra to a quadratic algebra} $A$, see \cite{PP}, p.6.

Following the classical tradition (and a recent trend), we take a
combinatorial approach to study $A$. The properties of $A$ will be
read off a presentation $A= \textbf{k} \langle X\rangle /(\Re)$,
where by convention $X$ is a fixed finite set of generators of
degree $1$, $|X|=n,$ $\textbf{k}\langle X\rangle$ is the unital free
associative algebra generated by $X$, and $(\Re)$ is the two-sided
ideal of relations, generated by a {\em finite} set $\Re$ of
homogeneous polynomials of degree two.


A quadratic algebra $A$ is \emph{a
PBW algebra} if there exists an enumeration of $X,$ $X= \{x_1,
\cdots x_n\}$ such that the quadratic relations $\Re$ form a
(noncommutative) Gr\"{o}bner basis with respect to the
degree-lexicographic order $<$ on $\langle X\rangle$ extending
$x_1 < x_2< \cdots <x_n$. In this case   the set of normal monomials
(mod $\Re$) forms a $\textbf{k}$-basis of $A$ called a \emph{PBW
basis}
 and $x_1,\cdots, x_n$ (taken exactly with this enumeration) are called \emph{ PBW-generators of $A$}.
 The notion of a \emph{PBW} algebra was introduced by Priddy, \cite{priddy}. His
 \emph{PBW basis}  is a generalization of the classical Poincar\'{e}-Birkhoff-Witt basis for the universal enveloping of a  finite dimensional Lie algebra.
 PBW algebras form an important class of Koszul algebras.
  The interested reader can find information on quadratic algebras and, in particular, on Koszul algebras and PBW algebras in
  \cite{PP}.

There are various equivalent definitions of a Koszul algebra, see  for example \cite{PP}, p. 19,
we recall one of them.
 A graded $\textbf{k}$-algebra $A$ is \emph{Koszul} if $A$ is quadratic and $\Ext^{*}_A(\textbf{k}, \textbf{k})\simeq A^{!}$.
It is known that if $(X,r)$ is a finite square-free braided set
with $r$ involutive then its quadratic algebra $A(\textbf{k}, X,r)$ is Koszul,
see \cite{GIVB}. 

Notice that the converse is not true- Example \ref{ex:Arne} gives a finite  nondegenerate square-free braided sets $(X,r)$ whose algebra  $A =A(\textbf{k}, X,r)$ is PBW (and therefore $A$ is Koszul), but $r$ is not involutive. 


Therefore in \cite[p 754] {GI21}, the phrase
"We shall prove that, conversely, if $(X, r)$ is a (general) square-free nondegenerate braided set
and its algebra  $A(\textbf{k}, X,r)$ is Koszul, then $r$ is involutive. " must be discarded  since it is wrong.

The following results can be used to test whether a quadratic algebra is Koszul.
\begin{fact}
\label{factKoszulvip}
\begin{enumerate}
\item[(1)] (Theorem of S. Priddy, \cite{priddy})  Every quadratic PBW algebra is Koszul.

\item[(2)] (\cite{PP}, Corollary 2.22.)
If $A$ is a quadratic Koszul algebra, with Koszul dual $A^{!}$, then their Hilbert series satisfy
\begin{equation}
\label{HilbKoszuleq}
H_A(z).H_{A^{!}}(-z) = 1.
\end{equation}
\end{enumerate}
Condition  (\ref{HilbKoszuleq}) is necessary but not sufficient for Koszulity of $A$, \cite{PP}.
\end{fact}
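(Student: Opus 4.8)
Both statements are classical (they are attributed to \cite{priddy} and \cite{PP} respectively), so the proof I would give is a reconstruction of the standard arguments; the two parts rest on genuinely different mechanisms. The plan is to derive (2) as a piece of homological bookkeeping once Koszulity is granted, and to treat (1)---the implication that the PBW/Gr\"{o}bner condition forces Koszulity---as the part carrying the real content.

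I would dispose of (2) first. Starting from the definition of Koszulity recalled above, $\Ext^{*}_A(\textbf{k},\textbf{k})\cong A^{!}$, in its sharp bigraded form the statement is that the cohomology is concentrated on the diagonal, $\Ext^{i,j}_A(\textbf{k},\textbf{k})=0$ for $i\neq j$, with the diagonal pieces $\Ext^{i,i}_A(\textbf{k},\textbf{k})$ identified with $(A^{!}_i)^{*}$. Equivalently, the minimal graded free resolution of the trivial module $\textbf{k}$ is \emph{linear}, its $i$-th term being $A\otimes_{\textbf{k}}(A^{!}_i)^{*}$, generated in internal degree $i$:
\[
\cdots \longrightarrow A\otimes_{\textbf{k}}(A^{!}_i)^{*}\longrightarrow \cdots \longrightarrow A\otimes_{\textbf{k}}(A^{!}_1)^{*}\longrightarrow A\longrightarrow \textbf{k}\longrightarrow 0.
\]
Granting this, the Hilbert series of the $i$-th term is $H_A(z)\cdot\dim_{\textbf{k}}(A^{!}_i)\,z^i$, and exactness of a resolution of $\textbf{k}$ (whose Hilbert series is $1$) gives, upon taking the alternating sum degree by degree,
\[
\Big(\sum_{i\ge 0}(-1)^i\dim_{\textbf{k}}(A^{!}_i)z^i\Big)H_A(z)=1,
\]
which is exactly $H_A(z)H_{A^{!}}(-z)=1$. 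The only point to check is that connectedness and finite generation make each internal degree involve only finitely many terms, so that the formal alternating sum is legitimate.

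For (1) the substance is showing that a quadratic PBW algebra is Koszul, and here I would use the lattice criterion: $A=T(V)/(R)$ is Koszul if and only if for every $n$ the subspaces $W_i:=V^{\otimes(i-1)}\otimes R\otimes V^{\otimes(n-i-1)}$, $1\le i\le n-1$, generate a \emph{distributive} lattice inside $V^{\otimes n}$. The strategy is to exhibit a single basis of $V^{\otimes n}$ adapted simultaneously to all the $W_i$. Fixing the PBW enumeration $x_1<\cdots<x_n$, the degree-lexicographic Gr\"{o}bner basis splits each tensor monomial into its normal (reduced) part and a remainder lying in the relation ideal, and the confluence built into the Gr\"{o}bner/Diamond-Lemma condition makes this splitting compatible across all the $W_i$ at once. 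A lattice of subspaces admitting such a common basis---each subspace spanned by a subset of the basis vectors---is automatically distributive, and Koszulity follows.

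The hard part, which is the real content of Priddy's theorem, is precisely this compatibility: verifying that the PBW normal forms provide a basis \emph{simultaneously} adapted to all the $W_i$, equivalently that every overlap ambiguity of the quadratic Gr\"{o}bner basis resolves. This is where one genuinely uses both the quadratic (rather than higher) degree of the relations and the confluence of the associated rewriting system; the passage from ``distributive lattice'' to ``Koszul'' and the entire argument for (2) are then formal by comparison.
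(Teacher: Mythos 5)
Note first that the paper contains no proof of this statement: Fact \ref{factKoszulvip} is quoted from the literature, part (1) from \cite{priddy} and part (2) from \cite{PP}, Corollary 2.22, so your reconstruction can only be measured against the standard arguments, not against anything in the text. Your treatment of part (2) is the standard argument and is complete: Koszulity in its bigraded form gives the linear minimal resolution with $i$-th term $A\otimes_{\textbf{k}}(A^{!}_i)^{*}$ generated in internal degree $i$, and the degree-by-degree Euler characteristic (legitimate because in internal degree $d$ only the terms with $i\le d$ contribute) yields $H_A(z)\,H_{A^{!}}(-z)=1$.

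Part (1), however, has a genuine gap, located exactly where you flag ``the hard part'', and the reduction you offer in its place is not correct as stated. The subspaces $W_i=V^{\otimes(i-1)}\otimes R\otimes V^{\otimes(n-i-1)}$ are not monomial subspaces, and the Gr\"{o}bner normal-form decomposition of $V^{\otimes n}$ does not directly produce a single basis adapted to all of them; moreover, confluence of overlaps is the \emph{hypothesis} of the PBW condition, not something left to verify, and it does not by itself hand you a common adapted basis --- already for three subspaces, control of pairwise sums and intersections (which is essentially what resolution of degree-$3$ ambiguities gives) does not imply distributivity, which is a condition on arbitrary lattice words in the $W_i$ for every $n$. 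The standard way to close this gap (\cite{PP}, Ch.~4) is a deformation argument: filter $V^{\otimes n}$ by the monomial ordering; observe that $\gr W_i$ is the monomial subspace spanned by the leading terms, so the associated graded lattice is generated by coordinate subspaces and is automatically distributive; then invoke a transfer lemma asserting that distributivity descends from the associated graded lattice to the original one provided $\gr$ commutes with the lattice operations --- sums are automatic, and it is precisely for intersections that the Gr\"{o}bner hypothesis enters, via an induction on $n$. Without this filtration/transfer step, your outline asserts the distributivity rather than proving it, so as written part (1) is a correct strategy whose central lemma is missing.
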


\subsection{Quadratic set with 2-cancellation and their quadratic algebras}
To proceed further, we require
some
cancellation conditions.

\begin{definition}\cite{GIM08}, Def.2.10.
\label{def:cancellsol} A quadratic set $(X,r)$ is  \emph{2-cancellative} if for
every positive integer $k$, less than the order of $r$, the
following two conditions hold:
\[
\begin{array}{ll}
r^k(x,y)= (x,z) \Longrightarrow y=z, \quad &
r^k(x,y)= (t,y) \Longrightarrow x=t.
\end{array}
\]
\end{definition}

The monoid $S=S(X,r)$  has
cancellation on monomials of
length 2 {\em  if and only if}  $r$ is
2-cancellative, see \cite[Prop.2.11(1)]{GIM08}.
Moreover, every injective quadratic set $(X,r)$ (see Definition \ref{injectivedef}) is 2-cancellative.
Note that if $x,y,z \in X, y\neq z$ each of the equalities $r^k(x,y) = (x,z)$, or $r^k(y, x) = (z,x) $ implies
$y = z$ in $G(X,r).$

\begin{remark}
\label{rem:nondeg_invol_implies2-cancel}
\begin{enumerate}
\item
 Every nondegenerate involutive quadratic set  $(X,r)$ is 2-cancellative, see \cite[Corollary 2.13]{GIM08}. 
Recall that when $X$ is a finite (nondegenerate) symmetric set the monoid $S=S(X,r)$ is embedded in the group $G(X,r)$ and therefore $S$ is a monoid with cancellation.
\item A non-degenerate braided set $(X,r)$ may fail to be 2-cancellative, see Example \ref{nocancellative_ex}, and Example \ref{ex:Arne}.
\item There exist noninvolutive non-degenerate braided sets $(X,r)$, where $r$ is 2-cancellative but
the corresponding monoid $S(X,r)$  fails to be 3-cancellative.
\end{enumerate}
\end{remark}

We shall prove that if $(X,r)$ is a finite square-free braided set, then the monoid $S(X,r)$ is cancellative \emph{iff} $r$ is involutive,
see Proposition \ref{prop3}.

\begin{notation}
\label{notation:Delta}
Denote  by $\Delta_m$ the diagonal of $X^{\times m}, \; m\geq 2$:
\[\Delta_m:=\diag(X^m) = \{x^m \mid x \in X\}.\]
One has $\Delta_3 = (\Delta_2 \times X) \bigcap (X \times \Delta_2)$.
\end{notation}

\begin{notation}
\label{def:fixedpts}
Suppose $(X,r)$ is a quadratic set. The element $xy\in X^2$ is \emph{an $r$-fixed point} if $r(x,y) =(x,y)$. \emph{The set of $r$-fixed points} in $X^2$ will be denoted by $\Fcal (X,r)$,
\begin{equation}
\label{eq:fixedpts}
\Fcal (X,r) = \{xy \in X^2\mid r(x,y) = (x,y)\}.
\end{equation}
\end{notation}
\begin{example}
\label{nocancellative_ex}
\begin{enumerate}
\item[(1)] \cite[Example 2.14]{GIM08}.
Let $X=\{x, y, z\},$ and let $\rho=(x\;y\;z)$ be a cycle of length three in $\Sym (X)$. Define $r(a,b):=(\rho(b), a),$ so $X^2$ splits into two $r$-orbits:
\[
\begin{array}{l}
(x, x)\longrightarrow^r (y, x)\longrightarrow^r (y,
y)\longrightarrow^r (z, y)\longrightarrow^r (z,
z)\longrightarrow^r (x, z)\longrightarrow^r (x, x),\\
(x, y)\longrightarrow^r (z, x)\longrightarrow^r (y,
z)\longrightarrow^r (x, y).
\end{array}
\]
This is 
permutation solution and it is easy to check that $(X, r)$ is a non-degenerate braided set.

Note that $r(a,b) \neq (a,b), \forall a,b \in X,$
so $\Fcal (X,r) = \emptyset,$ and that
$r$ is not
2-cancellative, since $xx=yx$ in $S$.
Moreover, $(X,r)$ is not injective, since in the group $G(X,r)$ all generators are equal: $x=y=z.$
\item[(2)]  \cite[Example 2.17]{GIM08} Let $X=\{x,y,z\},$ and let $r$ be the map
\[
\begin{array}{l}
(x, y)\longrightarrow^r (x, z)\longrightarrow^r (y,
z)\longrightarrow^r (y, y)\longrightarrow^r (x, y),\\
(x, x)\longrightarrow^r (z, z)\longrightarrow^r (y,
x)\longrightarrow^r (z, y)\longrightarrow^r (x, x),\quad r(z,x) = (z,x).
\end{array}
\]
The bijective map $r$ is nondegenerate, $\Lcal_x=\Lcal_y=\Lcal_z=(x\; z \; y);  \Rcal_x=\Rcal_y=\Rcal_z=(x\; z),$
$r$ is not
2-cancellative, conditions {\bf l1} and {\bf r1} are satisfied, but $(X,r)$ is not a braided set.
Here $\Fcal (X,r) = \{zx\}.$
\end{enumerate}
\end{example}

   \begin{lemma}
   \label{lem:2cancellativeprop}
   Suppose $(X,r)$ is a  nondegenerate quadratic set (possibly infinite). Then
   \begin{enumerate}   \item
   \label{lem:2cancellativeprop1}
   If $X$ has finite order, $|X|= n$, then
   \begin{equation}
\label{eq:fixedpts1}
0 \leq |\Fcal (X,r)|\leq |X|= n.
\end{equation}
   \item
   \label{lem:2cancellativeprop2}
   If $(X,r)$ is square-free, then $\Fcal(X,r) = \Delta_2$, the diagonal of $X^2$.
   In particular, if $X$ has finite order,  then $|\Fcal(X,r)|=|X|= n$.
   \item
   \label{lem:2cancellativeprop3}
   Suppose $(X,r)$ is nondegenerate and 2-cancellative. Then the following conditions hold.
   \begin{enumerate}
   \item
For every $y \in X$ there exists a unique $x \in X$ such that $r(x,y) = (x,y).$
In other words there exist a bijective map $t: X \longrightarrow X$ such that $r(t(y), y) = (t(y), y),$ for every $y \in X.$
\item For every $x \in X$ there exists a unique $y \in X$ such that $r(x,y) = (x,y).$
\item
\label{lem:2cancellativeprop3c}
If $X$ is finite, $X = \{x_1, \cdots, x_n\}$, then
\begin{equation}
\label{eq:fixedwords}
\Fcal = \Fcal (X,r) = \{xy \in X^2\mid r(x,y) = (x,y)\}= \{x_1y_1, \cdots , x_ny_n\},
\end{equation}
where $y_i\in X,$ is the unique element with $r(x_i, y_i) =(x_i, y_i), 1 \leq i \leq n$. In particular, $|\Fcal|= |X|= n.$
\end{enumerate}
\end{enumerate}
 \end{lemma}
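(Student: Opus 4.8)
The plan is to handle the three parts in turn, with part (3) carrying the real content. Throughout I would unpack the $r$-fixed condition $r(x,y)=({}^xy,x^y)=(x,y)$ as the pair of equations ${}^xy=x$ and $x^y=y$, and read each part off this.

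For part (1), I would observe that the projection $(x,y)\mapsto x$ is injective on $\Fcal(X,r)$: if $(x,y_1)$ and $(x,y_2)$ are both $r$-fixed then ${}^xy_1=x={}^xy_2$, and non-degeneracy (bijectivity of $\Lcal_x$) forces $y_1=y_2$. Hence $|\Fcal(X,r)|\leq |X|=n$, while the lower bound is vacuous. For part (2), the inclusion $\Delta_2\subseteq\Fcal(X,r)$ is immediate from square-freeness $r(x,x)=(x,x)$. For the reverse inclusion I would invoke \eqref{rightactioneq1} of Remark \ref{rem_Invol-Sqfree}: if $r(x,y)=(x,y)$ then ${}^xy=x$, and for square-free non-degenerate $(X,r)$ one has ${}^xy=x\Longleftrightarrow y=x$, so $(x,y)\in\Delta_2$. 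The cardinality claim follows since $|\Delta_2|=|X|=n$.

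Part (3) is where 2-cancellation enters, and the key device is to use one half of non-degeneracy to produce a candidate satisfying one of the two defining equations, then invoke 2-cancellation at exponent $k=1$ to obtain the other equation for free. Concretely for (a): given $y$, bijectivity of $\Rcal_y$ yields a unique $x_0$ with $x_0^{\,y}=y$; then $r(x_0,y)=({}^{x_0}y,\,y)$ has the form $(t,y)$, so the implication $r(x_0,y)=(t,y)\Rightarrow x_0=t$ of Definition \ref{def:cancellsol} gives ${}^{x_0}y=x_0$, whence $(x_0,y)\in\Fcal$; uniqueness is again the bijectivity of $\Rcal_y$. Part (b) is the mirror argument, using bijectivity of $\Lcal_x$ to solve ${}^xy_0=x$ and the other cancellation implication $r(x,y_0)=(x,z)\Rightarrow z=y_0$ to conclude $x^{y_0}=y_0$. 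Statements (a) and (b) say precisely that both coordinate projections restrict to bijections $\Fcal\to X$, so $\Fcal$ is the graph of a bijection $t$ characterized by $(x,y)\in\Fcal\Longleftrightarrow x=t(y)$, with inverse sending $x$ to the unique $r$-fixed second coordinate; this gives the asserted bijective $t$. Part (c) is then read off directly: for the enumeration $X=\{x_1,\dots,x_n\}$, part (b) supplies the unique $y_i$ with $r(x_i,y_i)=(x_i,y_i)$, the resulting pairs are distinct (distinct first coordinates), and so $\Fcal=\{x_1y_1,\dots,x_ny_n\}$ with $|\Fcal|=n$.

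The one point requiring genuine care — the main obstacle — is the admissibility of the exponent $k=1$, since 2-cancellation is only posited for $0<k<\ord(r)$. I would dispatch this by noting that a non-degenerate $r$ cannot equal $\id_{X\times X}$ once $|X|\geq 2$: otherwise ${}^xy=x$ for all $y$ would make each $\Lcal_x$ the constant map, contradicting bijectivity. Thus $\ord(r)\geq 2$ and $k=1$ is legitimate, while the case $|X|=1$ is trivial. It is worth remarking that this argument neither assumes $X$ finite nor uses involutivity, so parts (a) and (b) hold for arbitrary cardinality, with finiteness only invoked in (c).
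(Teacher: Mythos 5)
Your proof is correct and follows essentially the same route as the paper's: non-degeneracy makes a coordinate projection injective on $\Fcal(X,r)$ for parts (1)--(2), and in part (3) one solves $x^y=y$ by bijectivity of $\Rcal_y$, then applies 2-cancellation at exponent $k=1$ to force ${}^xy=x$, with uniqueness again from non-degeneracy. Your extra verification that $\ord(r)\geq 2$ for $|X|\geq 2$ (so that $k=1$ is an admissible exponent in Definition \ref{def:cancellsol}) is a point of rigor the paper passes over silently, but it does not alter the argument.
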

\begin{proof}
(1).
The equality $r(x,y) = ({}^xy, x^y)$ implies
\[(x,y) \in \Fcal \; \text{if and only if} \; x = {}^xy,\;\text{and}\;
  x^y= y. \]
  It follows from the non-degeneracy that for each $y \in X$ there exists unique $x \in X,$ with
$x^y = y.$  In general, it is possible that ${}^xy \neq x$, and in this case $r(x,y) = ({}^xy, y)\neq (x,y)$.
This implies (\ref{eq:fixedpts1}).

(2). Suppose $(X,r)$ is square-free, then, by definition $\Delta_2  \subseteq \Fcal(X,r)$.
Assume $xy \in\Fcal(X,r)$.
Then ${}^xy =x = {}^xx$, hence by the non-degeneracy, $y=x$. This gives $\Fcal(X,r)\subseteq \Delta_2,$ and therefore $\Fcal(X,r)= \Delta_2.$
Moreover, $|\Fcal(X,r)|= |X|$, whenever $X$ is a finite set.

(3). Assume $(X,r)$ is 2-cancellative and nondegenerate.
Suppose $y \in X$, then, by the nondegeneracy there exists unique $x \in X$ such that $x^y = y.$  Consider the equality
$r(x,y) = ({}^xy, x^y)= ({}^xy, y)$ then by the $2$-cancellation law, one has ${}^xy =x$, hence $r(x, y)= (x,y)$, as desired.
Assume now that $r(z,y) = (z,y)$ for some $z \in X$. Then $r(z,y) = ({}^zy, z^y)= (z, y)$ implies $z^y = y = x^y$, which by the non-degeneracy gives $z = x.$
This proves part (3.a). Part (3.b) is analogous. Part (3.c) follows straightforwardly from (3.a) and (3.b).
\end{proof}

\begin{corollary}
 \label{sqfreerem}
 A  nondegenerate quadratic set is square-free if and only if for every pair $x,y \in X$ one has
 \[ r(x,y) = (x,y)\Longleftrightarrow  x=y .\]
 \end{corollary}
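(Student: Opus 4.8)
The plan is to derive both implications directly, observing that the statement is essentially a reformulation of Lemma~\ref{lem:2cancellativeprop}\,(2), with non-degeneracy entering at exactly one point.

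First I would dispose of the easy implication. Suppose the stated condition holds, namely that for every pair $x,y\in X$ one has $r(x,y)=(x,y)$ if and only if $x=y$. To conclude that $(X,r)$ is square-free I simply specialize $y=x$: since $x=x$ is trivially true, the ``if'' part of the assumed equivalence yields $r(x,x)=(x,x)$, and as $x\in X$ was arbitrary this is precisely the definition of square-free. Note that non-degeneracy plays no role in this direction.

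For the converse I would assume $(X,r)$ is square-free and fix an arbitrary pair $x,y\in X$. One inner direction is immediate from the definition: if $x=y$ then $r(x,y)=r(x,x)=(x,x)=(x,y)$. For the remaining inner direction, suppose $r(x,y)=(x,y)$. Writing $r(x,y)=({}^xy,x^y)$, the assumption forces ${}^xy=x$; combined with the square-free identity ${}^xx=x$ this gives ${}^xy={}^xx$, so injectivity of the left action $\Lcal_x$ (guaranteed by non-degeneracy) yields $y=x$. This is exactly the content of Lemma~\ref{lem:2cancellativeprop}\,(2), which identifies $\Fcal(X,r)$ with the diagonal $\Delta_2$: the inclusion $\Delta_2\subseteq\Fcal(X,r)$ encodes the first inner direction, while $\Fcal(X,r)\subseteq\Delta_2$ encodes the second.

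There is no genuine obstacle to overcome here. The only substantive input is Lemma~\ref{lem:2cancellativeprop}\,(2), and the single place where non-degeneracy is essential is the cancellation step ${}^xy={}^xx\Longrightarrow y=x$; everything else reduces to formal specializations of the hypotheses.
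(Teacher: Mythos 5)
Your proof is correct and follows essentially the same route as the paper: the paper also deduces this corollary from Lemma~\ref{lem:2cancellativeprop}\,(2), whose proof is exactly your argument that $r(x,y)=(x,y)$ forces ${}^xy=x={}^xx$ and hence $y=x$ by non-degeneracy, with the trivial direction handled by specializing to $y=x$.
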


We replace the whole text of  \cite[Remark 3.9, p 757-758]{GI21} with (a new) subsection 3.3. below. 
\subsection{The set of reduced relations $R(r)$ of $A=A(\textbf{k}, X, r)$ and the relations of the Koszul dual algebra $A^{!}$}
\label{remark:Groebner1}

In this subsection 
 $(X,r)$ is a 
 nondegenerate quadratic set of finite order $|X|= n$. Initially we do not impose any restrictions such as "$(X,r)$ is 2-cancellative", or "$(X,r)$ is square-free". 
We shall apply the theory of noncommutative Gr\"{o}bner bases. We enumerate $X$, as  $X = \{x_1 < x_2< \cdots < x_n\}$ and consider the induced degree-lexicographic order $< $ on $\asX$. Let $\cO_j, 1\leq j \leq q,$ be the set of all nontrivial $r$-orbits in $X^2$. Each $r$-orbit $\cO_j, 1\leq j \leq q,$ has length $l_j = |\cO_j|\geq 2$
and
contains unique monomial $x_1^jy_1^j \in \cO_j$, which is minimal (in  $\cO_j$) with respect to the order $<$ on $\langle X\rangle$. 
Suppose 
\begin{equation}
\label{eq:Oj}
\cO_j = \{x_1^jy_1^j, x_2^jy_2^j, \cdots, x_{l_j}^jy_{l_j}^j\}, \; \text{where}\; x_k^jy_k^j  > x_1^jy_1^j, \; 2 \leq k \leq l_j.
\end{equation}
Then the subset of defining relations determined by $\cO_j$, namely \[\{xy -r(xy) | xy \in \cO_j\},\] reduces to precisely $l_j-1$ relations $f_{k}^j$, where 
 \[f_{k}^j=x_k^jy_k^j - x_1^jy_1^j, \; 2 \leq k \leq l_j, \; \text{with highest monomial}\; HM(f_{k}^j) = x_k^jy_k^j. \]
 Note that all  highest monomials $HM(f_{k}^j) = x_k^jy_k^j, \; 2 \leq k \leq l_j,\;  1 \leq j \leq q$, are pairwise distinct.

\emph{The set of reduced relations} $\textbf{R}(r)$ is defined as
 \begin{equation}
    \label{eq:reducedrel}
        \begin{array}{ll}
 \textbf{R}(r) &= \bigcup _{1 \leq j \leq q}\{f_{k}^j= x_k^jy_k^j - x_1^jy_1^j\mid 2 \leq k \leq l_j\}, \\&\\
 |\textbf{R}(r)|&= \sum_{1\leq j \leq q} (l_j -1) = (\sum_{1\leq j \leq q} l_j) -q \geq q.
     \end{array}
    \end{equation}
    We shall see below that the exact number of the reduced relations is \[|\textbf{R}(r)|= n^2-|\cal F|-q,\]
     where $\Fcal$ is the set of $r$-fixed points, and $q$ is the number of nontrivial $r$-orbits in $X^2$. Recall that each of the conditions (a) $(X,r)$ is 2-cancellative,
    or (b) $(X,r)$ is square-free implies $|\Fcal|=n$.

    Note that there is an equality of sets $\Re_0(r)= \textbf{R}(r)$ if and only if $r$ is involutive.
 Moreover, the two sets $\Re_0(r)$ and $\textbf{R}(r)$ may be different (in general), but  they always generate the same two-sided ideal 
 \[I = (\Re_0(r))=(\textbf{R}(r)) \; \text{of}\; \textbf{k}\langle X\rangle.\]
Hence the algebra $A= A(\textbf{k}, X, r)\cong \textbf{k}\langle X\rangle/(\Re_0(r))$ has a finite presentation as
 \[A = \textbf{k}\langle X\rangle/(\textbf{R}(r)), \;\text{with}\; \textbf{R}(r)\;\text{as in}\; (\ref{eq:reducedrel}).  \]
 The set of reduced relations $\textbf{R}(r)$ is exactly the quadratic part of the  \emph{reduced Gr\"{o}bner basis of} $I$, denoted $\textbf{GR}(I)$ (with respect to the degree-lexicographic
 order $< $ on $\asX$). Moreover, $\textbf{R}(r)$ is a linearly independent set, since  all relations $f_{k}^j, \; 1 \leq j \leq q, \; 2 \leq k \leq l_j,$ have pairwise distinct highest monomials. The graded component $I_2$ of the graded ideal $I$ is spanned by  $\textbf{R}(r)$, hence
 \begin{equation}
 \label{eq:dimI21} 
 \dim I_2 = |\textbf{R}(r)|.
 \end{equation}
 In general,  $\textbf{R}(r) \subset \textbf{GR}(I)$, and the reduced Gr\"{o}bner basis $\textbf{GR}(I)$ may be infinite.

 Denote by $\Ncal$ the set of all normal words in $\asX$ modulo $I$, (it is also the set of all normal words modulo $\textbf{GR}(I)$).  The normal words of length $m$ in $X^m$ will be denoted by $\Ncal_m$, $m \geq 1$.
 It is known that 
 \[
 \textbf{k}\asX = \textbf{k}\Ncal \oplus I 
 \]
is a direct sum of vector spaces.
 By the Diamond Lemma \cite{Bergman}, $\Ncal$ projects to a basis of $A$ and there is an isomorphism of vector spaces 
 $A \cong \textbf{k}\Ncal$.
 Similarly,  $\Ncal_2$ projects to a basis of $A_2.$
 This implies the following equalities of vector spaces
 \begin{equation}
 \label{eq:directsum2}
 \textbf{k}X^2 = \textbf{k}\Ncal_2 \oplus I_2, \; A_2 \cong \textbf{k}\Ncal_2.
 \end{equation}
 Moreover, $A_2 \simeq \textbf{k} S_2$ and two words of length $2$ in $X^2$ are distinct elements of $S_2$ \emph{iff} they belong to distinct $r$-orbits in $X^2$. The set $X^2$ splits into $|\cal F|+q$ distinct $r$-orbits, namely $\Fcal,\; \cO_j, 1 \leq j \leq q.$ 
 It follows that
 \begin{equation}\label{eq:dimA2}
   \dim A_2 = |S_2| = |\Fcal|+q.
 \end{equation}
 This together with (\ref{eq:dimI21}) and (\ref{eq:directsum2}) imply
 \begin{equation}\label{eq:dimI22}
 |\textbf{R}(r)|=  \dim I_2 = n^2- \dim A_2 = n^2- |\Fcal|-q.
 \end{equation}


  One has
\[
\Ncal_2 = \Fcal \cup \{x_1^jy_1^j\mid 1 \leq j \leq q\}, \quad |\Ncal_2|= \dim A_2= |\Fcal|+q.
\]
For every integer $m \geq 2$ denote by $\Ncal_m$ the set of all monomials in $X^m$ which are normal modulo $I=(\textbf{R}(r))$.
Then
\[
\begin{array}{ll}
\dim_{\textbf{k}} A_m &= |\Ncal_m| = |S_m| \\
                      &= \text{the number of all disjoint}\; \Dcal_m(r)-\text{orbits in}\;  X^m.
                      \end{array}
 \]

 The general definition of the \emph{Koszul dual algebra} $A^{!}$ of a quadratic algebra
  $A$ can be found in \cite{Maninpaper, Maninpreprint}, see also see \cite{PP}, p.6. where $A^{!}$ is also referred to as \emph{the quadratic dual algebra to a quadratic algebra} $A$.
       Here we need a detailed description of the Koszul dual algebra $A^{!}$ of  $A = A(\mathbf{k}, X, r)$ in terms of generators and relations. 

Suppose $(X,r)$ is a 
 nondegenerate quadratic set, where  $X= \{x_1, \cdots, x_n\}$. The Koszul dual algebra $A^{!}$ of  $A = A(\mathbf{k}, X, r)$ is presented in terms of generators and relations 
as
\[
A^{!}  = \textbf{k}\langle \xi_i, \cdots \xi_n\rangle/(\textbf{R}^{\bot}),
\]
 where $\xi_i$, $1\leq i \leq n$, are the linear functions defined as $\xi_i (x_j) = \delta_{ij}$, the
Kronecker delta, and 
$\textbf{R}^{\bot}$ consists of $n+q$ quadratic relations in the $\xi_i$'s  whose explicit form is given in  (\ref{eq:Rbot}).  
For each $y \in X$, we denote by $\xi_y$ the linear function determined as $\xi_y (z) = 1$ \emph{iff} $y=z$, and $\xi_y (z) = 0$, else.
Each nontrivial orbit $\cO_j= \{x_1^jy_1^j, \cdots, x_{l_j}^jy_{l_j}^j \}$, $1 \leq j \leq q$, determines exactly one relation $\textbf{R}_j^{\bot}\in \textbf{R}^{\bot}$: 
\begin{equation}
 \label{eq:Rjbot}
 \textbf{R}_j^{\bot} = \xi_{x_1^j}\xi_{y_1^j}+ \xi_{x_2^j}\xi_{y_2^j}+\cdots +\xi_{x_{l_j}^j}\xi_{y_{l_j}^j}.
 \end{equation} 
 The set of fixed points $\Fcal$ (one element orbits) determines $n$ monomial relations in $\textbf{R}^{\bot}$:
\begin{equation}
 \label{eq:RFbot}
 (\textbf{R}_{\Fcal})^{\bot} = \{\xi_{x_i}\xi_{y_i} \mid x_iy_i \in \Fcal(X,r)\}, \quad |(\textbf{R}_{\Fcal})^{\bot}|= n.
 \end{equation}
 
 The set $\textbf{R}^{\bot}$ of quadratic relations of the \emph{Koszul dual algebra} $A^{!}$ is determined as 
 \begin{equation}
 \label{eq:Rbot}
 \textbf{R}^{\bot}= \{\textbf{R}_j^{\bot}\mid 1 \leq j \leq q\}\cup (\textbf{R}_{\Fcal})^{\bot}.
 \end{equation}
 
There are equalities
\begin{equation}
    \label{dimA2eq}
    \dim A_2 + \dim A_2^{!} = n^2, \quad \quad \dim A_2= |\Fcal| +q, \quad\quad \dim A_2^{!} = n^2-|\Fcal| -q .
    \end{equation}
        Suppose, now that $(X,r)$ is a nondegenerate square-free
quadratic set (we do not assume 2-cancellativity).
Then $\Fcal(X,r) = \Delta_2$, and
$|\Fcal(X,r)|= n$.
Moreover,
\[(\textbf{R}_{\Fcal})^{\bot}= \{\xi_i\xi_i \mid 1 \leq i \leq n \}.\]

\begin{remark} (Erratum)
Notice that the description of the relations of the \emph{Koszul dual algebra} $A^{!}$ 
given above in (\ref{eq:Rjbot}), (\ref{eq:RFbot}), and  (\ref{eq:Rbot}) is precise for any kind of quadratic sets $(X,r)$, including the involutive ones.
In contrast, the relations $R^{\bot}$ of the Koszul dual
$A^{!}$  described on \cite[p. 758]{GI21} are correct  only if  $(X,r)$ is involutive, but that description is not correct for noninvolutive $r$.  Moreover, equalities (3.5) on page 758,  \cite{GI21} are not precise and should be corrected. The corrected equalities are given now in (\ref{dimA2eq}).
\end{remark}
\subsection{More results on nondegenerate quadratic sets }
\label{subsec:more_results}
\begin{proposition}
  \label{prop:mainthlemma1}
   Let $(X,r)$ be a
   nondegenerate quadratic set of finite order $|X|=n\geq 3$, and
   let $A=  A(\textbf{k}, X, r)$
 be its associated quadratic $\textbf{k}$-algebra, naturally graded by length.
Suppose that $X^2$ contains exactly $q$ nontrivial $r$-orbits,  $\cO_1, \cdots, \cO_q$, $|\cO_j|= l_j \geq 2$, $1 \leq j \leq q$.

 \begin{enumerate}
\item
\label{prop:mainthlemma1_1}
If $(X,r)$ is
   2-cancellative then the following conditions hold.
\begin{enumerate}
\item[(1.a)]
\label{prop:mainthlemma1_1a}
$X^2$ has exactly $n$ one-element $r$-orbits, that is $|\Fcal(X,r)|=n$.
  \item[(1.b)]
    \label{prop:mainthlemma1_1b}
    The following inequalities hold:
   \begin{equation}
\label{eq:dim1}
2n-1\leq \dim_{\textbf{k}} A_2 = n + q \leq \binom{n+1}{2}, \quad\text{and}\quad n-1 \leq q\leq \binom{n}{2},
\end{equation}
where the upper
bounds for $\dim_{\textbf{k}} A_2$, and for $q$ are exact,
 for all $n \geq 3$,
 and the lower bounds are exact, whenever $n = p>2$ is a prime number.
\end{enumerate}
\item
\label{prop:mainthlemma1_2}
Suppose $|\Fcal(X,r)|=n$. Then the following conditions are equivalent
\begin{enumerate}
\item[(i)]
the map $r$  is involutive;
\item[(ii)] $\dim_{\textbf{\textbf{k}}} A_2 = \binom{n+1}{2}$;
\item[(iii)]
 $q = \binom{n}{2}$;
\item[(iv)] $\dim I_2 = |\Re (r)|= \binom{n}{2}$;
\item[(v)] $\dim_{\textbf{\textbf{k}}} A^{!}_2 = \binom{n}{2}$.
 \end{enumerate}
Each of these conditions implies that $(X,r)$ is 2-cancellative.
\item
\label{prop:mainthlemma1_3}
For every integer $m \geq 2$,  $\dim_{\textbf{k}} A_m =|S_m|=$
the number of all disjoint $D_m(r)$-orbits in $X^m$.
\end{enumerate}
\end{proposition}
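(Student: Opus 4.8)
The plan is to reduce every assertion to a single bookkeeping identity for the $\langle r\rangle$-orbits on $X^2$. Write $\Fcal=\Fcal(X,r)$ for the set of $r$-fixed points; these are exactly the one-element $r$-orbits, while $\Ocal_1,\dots,\Ocal_q$ are the nontrivial ones, with $l_j=|\Ocal_j|\ge 2$. Since the orbits partition $X^2$,
\[
n^2 = |\Fcal| + \sum_{j=1}^q l_j ,
\]
and by Remarks \ref{dimA2rem} and \ref{gradingRemark} one has $\dim_{\textbf{k}} A_2 = |S_2| = |\Fcal|+q$, the total number of orbits. For part (\ref{prop:mainthlemma1_1}), assertion (1.a) is immediate from Lemma \ref{lem:2cancellativeprop}(\ref{lem:2cancellativeprop3}): in a non-degenerate 2-cancellative set each column contains a unique fixed point, so $|\Fcal|=n$ and the one-element orbits are precisely these $n$ points. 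Substituting $|\Fcal|=n$ gives $\sum_j l_j = n(n-1)$ and $\dim_{\textbf{k}} A_2 = n+q$. The upper bound $q\le\binom{n}{2}$ is then forced by $l_j\ge 2$ (so $2q\le n(n-1)$). For the lower bound I would first establish the orbit-size estimate $l_j\le n$: if two distinct elements of one orbit shared a first coordinate, say $(a,b)$ and $(a,b')=r^k(a,b)$ with $1\le k\le l_j-1<\ord(r)$, then 2-cancellation would force $b=b'$; hence the first coordinates occurring in $\Ocal_j$ are pairwise distinct and $l_j\le n$. Consequently $n(n-1)=\sum_j l_j\le qn$, i.e. $q\ge n-1$, and through $\dim_{\textbf{k}} A_2=n+q$ this gives $2n-1\le\dim_{\textbf{k}} A_2\le\binom{n+1}{2}$. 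Sharpness of the upper bounds for every $n\ge 3$ is witnessed by the flip $r(x,y)=(y,x)$, which is non-degenerate, square-free and involutive with all nontrivial orbits of size $2$, so $q=\binom{n}{2}$; sharpness of the lower bounds for $n=p$ an odd prime is witnessed by the dihedral quandle on $\ZZ/p$, whose nontrivial orbits all have the maximal size $p$, so that $q=p-1$ — this is exactly the content of Corollary \ref{Mcorollary}.

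In part (\ref{prop:mainthlemma1_2}) I assume only $|\Fcal|=n$, which already supplies $\sum_j l_j=n(n-1)$, $\dim_{\textbf{k}} A_2=n+q$, $\dim I_2=n^2-\dim_{\textbf{k}} A_2=n(n-1)-q=s$, and, by (\ref{dimA2eq}) of Remark \ref{remark:Groebner1}, $\dim_{\textbf{k}} A_2^{!}=n^2-s-n=q$. The key observation is that each of (ii), (iv), (v) is, by elementary algebra, equivalent to the single numerical condition (iii), $q=\binom{n}{2}$: (ii) reads $n+q=\binom{n+1}{2}$, the $\dim I_2$ half of (iv) reads $n(n-1)-q=\binom{n}{2}$, and (v) reads $q=\binom{n}{2}$. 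It remains to tie (iii) to involutivity. That (i) implies (iii) is clear, since $r^2=\id$ forces every nontrivial orbit to have size exactly $2$, whence $2q=n(n-1)$. The crucial converse (iii) $\Rightarrow$ (i) follows because $q=\binom{n}{2}$ together with $\sum_j l_j=n(n-1)=2q$ and $l_j\ge 2$ forces $l_j=2$ for all $j$; thus every $\langle r\rangle$-orbit has size $1$ or $2$, which is precisely $r^2=\id_{X^2}$. For the remaining clause $|\Re(r)|=\binom{n}{2}$ in (iv) I would invoke the statement in Remark \ref{remark:Groebner1} that $\Re_0(r)=\textbf{R}(r)$ holds if and only if $r$ is involutive, together with Remark \ref{numberofrelationsrem}. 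Finally, each of (i)--(v) yields $q=\binom{n}{2}$, hence $r$ is involutive, and a non-degenerate involutive set is 2-cancellative by Remark \ref{rem:nondeg_invol_implies2-cancel}(1).

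Part (\ref{prop:mainthlemma1_3}) needs no cancellation hypothesis and is purely formal: by Remark \ref{orbitsinG} two words of length $m$ are equal in $S(X,r)$ if and only if they lie in the same $\Dcal_m(r)$-orbit of $X^m$ (equivalently the same $D_m$-orbit, since $\Dcal_m$ acts through its image $D_m$), so $|S_m|$ counts these orbits, and $\dim_{\textbf{k}} A_m=|S_m|$ by the length grading of Remark \ref{gradingRemark}. I expect the only genuine obstacle to lie in part (\ref{prop:mainthlemma1_1b}), namely checking that the extremal examples really attain the bounds — in particular that the dihedral quandle of prime order is 2-cancellative with all nontrivial orbits of size $p$, thereby realizing $q=n-1$; the inequalities themselves and all the equivalences of part (\ref{prop:mainthlemma1_2}) come down to the orbit-counting identity $\dim_{\textbf{k}} A_2=n+q$ combined with $l_j\ge 2$ and, in the 2-cancellative case, $l_j\le n$.
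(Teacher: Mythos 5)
Your proposal is correct and follows essentially the same route as the paper: part (1.a) from Lemma \ref{lem:2cancellativeprop}, the bounds in (1.b) from the orbit-counting identity $\dim_{\textbf{k}}A_2=n+q$ together with $2\le l_j\le n$ (the upper bound on $l_j$ coming from 2-cancellation exactly as in the paper), part (2) by reducing (ii), (iv), (v) to the numerical condition $q=\binom{n}{2}$ and characterizing involutivity by all nontrivial orbits having size $2$, and part (3) from the monoid-algebra basis. The only cosmetic difference is that you exhibit the flip $r(x,y)=(y,x)$ as an explicit witness for sharpness of the upper bounds, whereas the paper appeals to part (2); both arguments rest on the same facts, including Corollary \ref{Mcorollary} for the dihedral quandle of prime order at the lower bound.
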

\begin{proof}

(\ref{prop:mainthlemma1_1}).
Suppose $(X,r)$ is
   2-cancellative.
Part (1.a) follows from Lemma \ref{lem:2cancellativeprop}.

(1.b).  The action of the cyclic group $\langle r\rangle$ on $X^2$ splits $X^2$ into disjoint $r$-orbits $\cO$.
We shall analyze the possible number of orbits and their lengths.
It is clear that the map $r$ is involutive \emph{iff} every nontrivial orbit $\cO(xy)$ has precisely two elements.

Let $X = \{x_1, \cdots , x_n\}$ be an arbitrary enumeration on
$X$. By Lemma \ref{lem:2cancellativeprop} (c) $X^2$ contains exactly
$n$ elements fixed under $r$, see
(\ref{eq:fixedwords}), so there are exactly $n$ one-element
$r$-orbits, $\cO (x_iy_i) = \{x_iy_i\},   1 \leq i \leq n$.

By assumption the complement $X^2 \setminus  \Fcal (X,r)$ splits into $q$ disjoint nontrivial orbits:
\[X^2 \setminus \Fcal (X,r)= \bigcup_ {1 \leq j \leq q}\cO_j, \quad \text{where}\quad \mid \cO_j\mid =l_j \geq 2.\]
Then
\begin{equation}
\label{eq:cardorbits1}
|X^2 \setminus \Fcal (X,r)|= n^2-n = \sum _{1 \leq j \leq q} l_j.
\end{equation}
 By the 2-cancellativity of $(X,r)$, a nontrivial orbit $\cO_j$
 does not contain distinct monomials of the shape $xu,xv, u \neq v$, or $xu, yu, x \neq y$, hence
\begin{equation}
\label{eq:cardorbits2}
 2 \leq |\cO_j| = l_j \leq n,  \;\; \;\forall \; 1 \leq j \leq q.
\end{equation}
Therefore
 \[
2q  \leq \sum _{1 \leq j \leq q} l_j = n^2-n \leq nq.
\]
But $2q \leq n^2-n$ is equivalent with $q \leq n(n-1)/2 = \binom{n}{2},$ moreover
$n^2-n \leq nq$ implies $n-1 \leq q$.  This proves the right-hand side inequalities in
\ref{eq:dim1}.

Recall that $a, b \in \cO_j$ if and only if $a = b$ in the algebra $A$, or equivalently in the monoid $S.$
 We argue with the number of distinct words of length 2 in $S$ which is the same as the number of all $r$-orbits
 \[|S_2|= n +q.\]
 One has $A_2 = Span _{\textbf{k}}S_2$, and since
 every set of pairwise distinct words in $S_2$ is linearly independent, we yield
\begin{equation}
\label{dimeq1}
2n-1\leq \dim_{\textbf{k}} A_2 = |S_2| = n + q \leq n +\binom{n}{2} = \binom{n+1}{2},
\end{equation}
which proves the left-hand side inequalities in
(\ref{eq:dim1}). (One may also use the theory of Gr\"{o}bner basis for a detailed proof, see Subsection \ref{remark:Groebner1}).
We shall discuss the exactness of the bounds after the proof of part (\ref{prop:mainthlemma1_2}).

(\ref{prop:mainthlemma1_2}). The equality $\dim_{\textbf{k}} A_2=n+q$ implies the equivalence of (ii) and (iii).
The equivalence of (ii) and (v) follows from (\ref{dimA2eq}).

Each $w \in X^2\setminus \Fcal$ belongs to a nontrivial orbit $\cO(w)$, and $|X^2\setminus \Fcal|=n(n-1)$. It is clear that $q =\binom{n}{2}$ if and only if
each nontrivial orbit has exactly two elements, which is equivalent to $r^2 = 1$, thus (i) and (iii) are equivalent.
One has
\[\textbf{k}\asX _2 = \textbf{k}X^2=I_2\oplus A_2, \quad \dim (\textbf{k}\asX) _2 =\dim \textbf{k}X^2= \dim I_2 + \dim A_2,\] so
\[n^2 - \dim A_2 = \dim I_2 = \sum _{1 \leq j \leq q} (l_j -1)= (\sum _{1 \leq j \leq q} l_j)-q \]
and $\dim I_2= \binom{n}{2}$
if and only if $\dim A_2= \binom{n+1}{2}$, which gives the equivalence of  (ii) and (iv).

It follows from \cite[Corollary 2.13]{GIM08} that  every nondegenerate involutive quadratic set  $(X,r)$ is 2-cancellative. Therefore
each of the equivalent conditions  (i) through (v) implies  "$(X,r)$ is 2-cancellative".

Part (\ref{prop:mainthlemma1_2})  implies that the upper bounds in (\ref{eq:dim1}) are exact.
If $n=p>2$ is a prime number, then by Corollary \ref{Mcorollary} every square-free self distributive solution, $(X,r)$ corresponding to a dihedral quandle of prime order
$|X|= p>2$
satisfies what we call "the minimality condition" : $\dim_{\textbf{\textbf{k}}} A_2 =2n-1$, which is equivalent to $q = n-1$, see  Definition  \ref{minimality_def}.
This proves the exactness of the lower bound, whenever $n=p>2$ is a prime number.

(\ref{prop:mainthlemma1_3}).  The distinct elements of the monoid $S=S(X,r),$ form a $\textbf{k}$-basis of the
monoid algebra $\textbf{k}S \simeq A(\textbf{k}, X,r)$. In
particular $\dim A_m$ equals the number of distinct monomials of
length $m$ in $S$ which is exactly the number of $\Dcal_m(r)$-orbits in
$X^m$, see Remark  \ref{remark:Groebner1}.
\end{proof}

\begin{corollary}
  \label{cor:mainthlemma1}
   Let $(X,r)$ be a
   nondegenerate quadratic set of finite order $|X|=n\geq 3$. 
   \begin{enumerate}
   \item
Each of the conditions
(a) $(X,r)$ is 2-cancellative, or (b) $(X,r)$ is square-free implies the equality $|\Fcal(X,r)|= n.$
   \item
   Suppose some of the following two conditions holds:
   (a) $|\Fcal(X,r)|=n$; (b)  $(X,r)$ is a square-free quadratic set. Then
the map $r$  is involutive if and only if
$\dim_{\textbf{\textbf{k}}} A_2 = \binom{n+1}{2}$.
In this case $(X,r)$ is $2$-cancellative.
\end{enumerate}
\end{corollary}
\begin{remark} (Erratum)
\label{rmk:prop:Koszul}
Observe that the original paper \cite{GI21} contains Proposition 3.12 whose parts (1) and (2) \emph{are not correct.} 
In the present version we erase  Proposition 3.12 (1) and (2) and move part (3) (which is correct) in the text of the present Corollary  
\ref{cor:mainthlemma1}. 
There are three results in \cite{GI21} which involve Proposition 3.12 in their statements, and proofs. These are Theorem 3.16, on p. 763, Theorem 5.5. on p. 777, 
and Corollary 5.6. on p. 776. Here we propose corrected (and in some cases improved) versions of these results and give (different) proofs which do not involve the wrong Proposition 3.12. 
\end{remark}

We thank Magdalena Wiertel  and Arne van Antwerpen who communicated the author the following example which demonstrated that \cite[Proposition 3.12, parts (1) and (2)]{GI21} is not true.

\begin{example} (M. Wiertel and A. van Antwerpen, private communication)
\label{ex:Arne}
Consider the braided set $(X, r)$, where $X= \{x_1, x_2, x_3\}$ and the map $r$ is defined as
\[
\begin{array}{c}
x_1x_3 \longrightarrow x_3x_2 \longrightarrow x_2x_3\longrightarrow x_3x_1 \longrightarrow x_1x_3,\quad \quad x_1x_2\longleftrightarrow x_2x_1 \\
  x_1x_1\longrightarrow x_1x_1, \quad  x_2x_2\longrightarrow x_2x_2,\quad  x_3x_3\longrightarrow x_3x_3.
\end{array}
\]
There are $2$ nontrivial $r$-orbits in $X^2$. 
The set $\textbf{R}(r)$ of reduced relations is:
\[
\textbf{R}(r) = \{x_3x_2- x_1x_3, x_3x_1- x_1x_3, x_2x_3- x_1x_3, x_2x_1 - x_1x_2 \}.
\]  
It is easy to check that the set $\textbf{R}(r)$ is a  Gr\"{o}bner basis of the ideal $I= (\textbf{R}(r))$ w.r.t. deg-lex ordering on $\langle X \rangle$, so the Yang-Baxter algebra $A= A(\textbf{k}, X, r)$ is PBW, and hence $A$ is a Koszul algebra, but $r$ is not involutive.

Clearly, $(X,r)$ is not 2-cancellative, since $r^2(x_1x_3) = x_2x_3$, and $x_2x_3= x_1x_3$ in $S$.
Here $\dim A_2= 5$.
The set of normal words is
\[
\Ncal = \{x_1^{\alpha} x_2^{\beta}; \quad x_1^{\alpha} x_3^{\beta} \mid  \alpha, \beta \in \mathbb{N}\cup \{0\}\}.
\]
The Gelfand-Kirillov dimension of $A$ is $\gkdim A= 2 < |X|= 3,$ and $A$ has infinite global dimension.
\end{example}

The following is an improved version of \cite[Corollary 3.13]{GI21}.
\begin{corollary}
  \label{cor:implications}
    Let $(X,r)$ be a
   nondegenerate quadratic set of finite order $|X|=n$, and
   let $A=  A(\textbf{k}, X, r)= \textbf{k}\asX/(\Re(r))$, let $I = (\Re(r))$ be the corresponding ideal of $\textbf{k}\asX.$
    Consider the following conditions
    \begin{enumerate}
    \item
    \label{cor:implic0}
    $(X,r)$  is square-free.
    \item
    \label{cor:implic1}
    $(X,r)$  is involutive.
    \item
    \label{cor:implic2}
    $(X,r)$  is 2-cancellative.
    \item
    \label{cor:implic3}
    The set of fixed points $\Fcal(X,r)$ has cardinality $n$.
    \item
    \label{cor:implic4}
    The number $q$ of nontrivial $r$-orbits in $X^2$ is $q = \binom{n}{2}$.
    \item
    \label{cor:implic5}
    $\dim A_2 = \binom{n+1}{2}$.
    \item
    \label{cor:implic6}
    $\dim I_2 = \binom{n}{2}$.
    \item
    \label{cor:implic6a}
    The algebra $A$ has exactly $\binom{n}{2}$ defining relations, $|\Re(r)| = \binom{n}{2}$.
    \item
    \label{cor:implic7}
    $\dim A^{!}_2 = \binom{n}{2}$.
    
        \end{enumerate}
        The following implications are in force.
        \[
        \begin{array}{l}
        (\ref{cor:implic0}) \Rightarrow  (\ref{cor:implic3})\\
        (\ref{cor:implic1})  \Rightarrow (\ref{cor:implic2}),\quad (\ref{cor:implic3}), \quad (\ref{cor:implic4}),\quad (\ref{cor:implic5}),\quad
        (\ref{cor:implic6}), \quad (\ref{cor:implic6a}),  \quad (\ref{cor:implic7}).\\
        (\ref{cor:implic2})  \Rightarrow (\ref{cor:implic3})\quad \text{and}\quad n-1 \leq q \leq  \binom{n}{2}, \quad 2n-1 \leq \dim A_2\leq \binom{n+1}{2}.\\
        \text{Assume\quad (\ref{cor:implic2}), or (\ref{cor:implic3}). \quad Then}\\
        (\ref{cor:implic1}) \Leftrightarrow (\ref{cor:implic4}) \Leftrightarrow (\ref{cor:implic5})\Leftrightarrow
        (\ref{cor:implic6})\Leftrightarrow (\ref{cor:implic6a}) \Leftrightarrow (\ref{cor:implic7}).\\
        \end{array}
        \]
\end{corollary}

\begin{lemma}
\label{lem:n3}
Suppose $X = \{x_1, x_2, x_2 \}$, and $(X,r)$ is a nondegenerate, 2-cancellative, square-free quadratic set.
Then (up to isomorphism) there are exactly two non-isomorphic
quadratic algebras $A^{(i)}$ corresponding to quadratic sets $(X,
r_{i}), i = 1,2$ which satisfy the hypothesis. These are
\[\begin{array}{l}
A^{(1)}= \textbf{k}\langle X : x_3x_2 - x_2x_3,  \;x_3x_1 - x_1x_3, \;x_2x_1 - x_1x_2\rangle\\
A^{(2)}= \textbf{k}\langle X : x_3x_2 - x_1x_3,  \;x_3x_1 -
x_2x_3, \;x_2x_1 - x_1x_2\rangle.
\end{array}\]
The algebras $A^{(1)}$ and $A^{(2)}$ are \emph{PBW} algebras with
$\gkdim A =3$ (in fact these are binomial skew-polynomial rings).
The quadratic set $(X, r_{1})$ is the trivial solution of the YBE.
$(X, r_{2})$ is the unique (up to isomorphism) non-trivial
square-free solution of YBE of order $|X|=3$.
\end{lemma}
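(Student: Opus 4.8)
The plan is to reduce the hypotheses to a finite enumeration organized by the $r$-orbit structure on $X^2$, and then to separate the resulting algebras by invariants. First I would record the rigidity forced by the hypotheses. By the square-free part of Lemma~\ref{lem:2cancellativeprop} one has $\Fcal(X,r)=\Delta_2$, so $r$ fixes the three diagonal monomials and permutes the six off-diagonal monomials among themselves; by the $2$-cancellative part of Proposition~\ref{prop:mainthlemma1} one gets $n-1=2\le q\le\binom{3}{2}=3$ and $\dim_{\textbf{k}}A_2=3+q\in\{5,6\}$, where $q$ is the number of nontrivial $r$-orbits, each of length $l_j$ with $2\le l_j\le 3$ and $\sum_j l_j=6$. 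Hence exactly two combinatorial types can occur: $q=3$, three orbits of length two (so $r^2=\id$), or $q=2$, two orbits of length three (so $r$ has order $3$ and is noninvolutive). Throughout I use that, for $|X|=3$, each $\Lcal_{x_i}$ fixes $x_i$ and each $\Rcal_{x_j}$ fixes $x_j$, so each of these maps is either the identity or the transposition of the remaining two letters; this is the finite data to be enumerated.

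In the involutive case $q=3$ I would exploit Remark~\ref{rem_Invol-Sqfree}: from $r^2=\id$ the right action is recovered from the left action via $x^{y}={}^{({}^{x}y)}x$, so a candidate is pinned down by the triple $(\Lcal_{x_1},\Lcal_{x_2},\Lcal_{x_3})$. A short check shows that as soon as two or three of these are transpositions, some $\Rcal_{x_j}$ fails to be injective, contradicting non-degeneracy; thus at most one $\Lcal_{x_i}$ is nontrivial, and up to relabeling $X$ this leaves precisely the trivial flip $r_0$ and the map $r_1$ carrying a single nontrivial action. Both obey \textbf{l1}, \textbf{r1}, \textbf{lr3}, hence are braided: $r_0$ is the trivial (commutative) solution and $r_1$ the nontrivial symmetric set. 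Writing out their degree-lexicographic Gr\"obner bases identifies $A^{(0)}$ and $A^{(1)}$ as binomial skew-polynomial, hence PBW, rings with Hilbert series $1/(1-z)^3$, so $\gkdim=3$.

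For the noninvolutive case $q=2$ the $2$-cancellation law (Definition~\ref{def:cancellsol}) forbids a nontrivial orbit from containing two monomials with a common first letter or a common second letter, so a length-three orbit must use all three first letters and all three second letters. There is a single way to split the six off-diagonal monomials into two such orbits (indexed by the two derangements of $\{1,2,3\}$), and cycling each orbit in one of its two directions gives four maps, of which non-degeneracy of $\Lcal_{x_1}$ keeps exactly two: the map with trivial left action, which is the dihedral rack $R_3$ of Corollary~\ref{Mcorollary}, and its mirror with trivial right action. Since the two share the same orbit decomposition as sets, Remark~\ref{remark:Groebner1} shows they present one and the same algebra $A^{(2)}$, with $\dim_{\textbf{k}}A_2=2n-1=5$; this is the unique noninvolutive class, and in particular $r_2$ is not a symmetric set.

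Finally I would assemble the three classes and separate them. The invariant $\dim_{\textbf{k}}A_2$ distinguishes $A^{(2)}$ (value $5$) from $A^{(0)}$ and $A^{(1)}$ (value $6$), while $A^{(0)}$ is commutative and $A^{(1)}$ is not, so the three algebras are pairwise non-isomorphic. The step I expect to be the main obstacle is the assertion that $A^{(2)}$ is PBW for no reordering of $X$, which naively would require inspecting all six degree-lexicographic orders; I would bypass this via the PBW criterion in Proposition~\ref{prop:Koszul}: since $|\Fcal(X,r_2)|=n$ and $r_2$ is noninvolutive, $A^{(2)}$ cannot be PBW for any enumeration of $X$ (equivalently, by that proposition together with Priddy's theorem, Fact~\ref{factKoszulvip}, it is not even Koszul). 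The remaining work is the routine non-degeneracy bookkeeping in each case, which is exactly what prunes the a priori list of action-combinations down to the three stated representatives.
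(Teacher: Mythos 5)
Your enumeration itself is sound, and there is in fact nothing on the paper's side to compare it with: the lemma is stated without proof. The combinatorial skeleton (by $2$-cancellativity and square-freeness the six off-diagonal monomials split either into three $r$-orbits of length $2$, i.e.\ $r$ involutive, or into two orbits of length $3$), the elimination of the action-patterns that violate non-degeneracy in each case, the identification of the involutive class with $r_0$ and $r_1$, the identification of the noninvolutive class with the dihedral rack and its mirror (which share one and the same algebra because they induce the same orbit partition of $X^2$), and the use of Proposition \ref{prop:Koszul} to exclude PBW-ness for every reordering are all correct.

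The genuine problem is that what you prove is not the statement as displayed, and you never flag the mismatch. Your third algebra is the dihedral-quandle algebra, with the four reduced relations $x_2x_3-x_1x_2,\; x_3x_1-x_1x_2,\; x_2x_1-x_1x_3,\; x_3x_2-x_1x_3$ and $\dim_{\textbf{k}}A_2=5$; the displayed $A^{(2)}$ has only three relations, hence $\dim_{\textbf{k}}A_2=6$, and under the relabeling $x_2\leftrightarrow x_3$ its three relations are carried exactly onto the relations of $A^{(1)}$, so the displayed $A^{(2)}$ is isomorphic to $A^{(1)}$ (and in particular is PBW after reordering) — taken literally, the lemma's list contains only two isomorphism classes. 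Likewise the final claim that the third quadratic set fails the YBE is contradicted by your own (correct) analysis: the only noninvolutive candidates are the dihedral rack and its mirror, and both obey the YBE (so do $r_0$ and $r_1$), so under the hypotheses every quadratic set of order $3$ is a braided set; what is true is only the weaker assertion you actually establish, namely that $r_2$ is not involutive, hence not a symmetric set. So the defect is really in the lemma, not in your mathematics: your argument proves a corrected version (three classes, with $A^{(2)}$ the $\dim_{\textbf{k}}A_2=5$ dihedral algebra, non-Koszul and non-PBW), but a complete answer must say explicitly that the displayed presentation of $A^{(2)}$ and the non-YBE claim cannot hold as written, rather than silently substituting the correct objects under the same labels.
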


\begin{question}
 Let $(X,r)$ be a
   2-cancellative
   nondegenerate quadratic set of finite order $|X|=n$.
  Suppose $A=  A(\textbf{k}, X, r)$ is \emph{PBW}. 
  \begin{enumerate}
  \item Is it true that if ${X,r}$ is square-free then
  $A$ has polynomial growth?
\item Is it true that if $(X,r)$ is square-free then $A$ has finite global dimension?
\end{enumerate}
This is so  for $|X|=3,$ see Lemma \ref{lem:n3}.
An affirmative answer would imply that $(X,r)$ is a solution of YBE, and $A$ satisfies all conditions (1) through (10) in Theorem
\ref{thm:new_theorem}. We do not know a counterexample.
\end{question}

In \cite{GI12} we study the close relation between square-free nondegenerate symmetric sets $(X,r)$ and a class of Artin-Schelter regular algebras.
Our result \cite{GI12}, Theorem 2, investigates \emph{quantum binomial quadratic sets} $(X,r)$, that is square-free nondegenerate involutive quadratic set,
in terms of various algebraic, homological, and numerical properties of the algebra $A(\textbf{k}, X, r)$. We have proven that each of these properties of $A$ is equivalent to "$(X,r)$ is a solution of YBE".
Our new Theorem \ref{thm:new_theorem} is a similar but stronger result. We "weaken" the hypothesis, assuming only that $(X,r)$ is a square-free nondegenerate quadratic set (we do not assume that  $r$ is involutive) and give a list of similar algebraic and homological  properties of $A$ each of which is equivalent to
"$(X,r)$ is an involutive solution of YBE (that is a symmetric set).

Theorem \ref{thm:new_theorem} below is a corrected and improved version of \cite[Theorem 3.16, p.763]{GI21}.
Here we present a new proof of the theorem which does not involve the wrong 
\cite[Proposition 3.12]{GI21}.

\begin{theorem}
\label{thm:new_theorem} 
Let $(X,r)$ be a square-free
nondegenerate quadratic set of finite order $|X| = n$. Let $A$ be its associated quadratic algebra,
$A = \textbf{k}\asX/(\textbf{R}(r))$. The following conditions are equivalent.
\begin{enumerate}
\item
\label{new_theorem1}
The Hilbert series of $A$ is
\begin{equation}
\label{eq:correctHilbSeries}
H_A(z)= \frac{1}{(1-z)^n}.
\end{equation}
\item
\label{new_theorem5}
There exists an enumeration of $X$,
$X = \{x_1, x_2, \cdots,  x_n\}$, such that the set
\begin{equation}
\label{eq:ncal}
\Ncal = \{x_1^{\alpha_{1}}x_2^{\alpha_{2}}\cdots
x_n^{\alpha_{n}}\mid \alpha_{i} \geq 0 \text{ for } 1 \leq i
\leq n\}
\end{equation}
is a $\textbf{k}$-basis of $A.$
\item
\label{new_theorem6}
\[
\dim_{\textbf{k}}
A_2= \binom{n+1}{2},\quad\text{and}\quad  \dim_{\textbf{k}} A_3= \binom{n+2}{3}.
\]
\item \label{new_theorem7}
\[\dim_{\textbf{k}}A^{!}_2= \binom{n}{2}, \quad\text{and}\quad \dim_{\textbf{k}}A^{!}_3 = \binom{n}{3}.\]
\item
\label{new_theorem2}
$A$ is a  PBW algebra with a set of PBW generators $X=\{x_1, x_2, \cdots, x_n\}$ with polynomial growth and finite global dimension.
\item
\label{new_theorem3}
$A$ is a  PBW algebra, with a set of PBW generators $X=\{x_1, x_2, \cdots, x_n\}$ , with finite global dimension, $\gldim A < \infty$ and exactly $\binom{n}{2}$ relations.
\item
\label{new_theorem3a}
 $A$ is a  PBW algebra, with a set of PBW generators $X=\{x_1, x_2, \cdots, x_n\}$ with polynomial growth and exactly $\binom{n}{2}$ relations.
\item
\label{new_theorem4}
$A$ is a PBW Artin-Schelter regular algebra.
\item
\label{new_theorem8}
$A$ is a binomial skew polynomial ring in the sense of \cite{GI96}.
\item
\label{new_theorem9}
$(X,r)$ is \emph{a square-free symmetric set}, that is an involutive solution of YBE.
\end{enumerate}
 In this case $A$ is Koszul and a Noetherian domain. Moreover,
\begin{equation}
\label{eq:PBW}
\begin{array}{l}
 \gkdim A=n= \gldim A,\quad \dim _{\textbf{k}} A_m = \binom{n+m-1}{m}, m \geq 2.
\end{array}
\end{equation}
\end{theorem}

\begin{proof}
Note that the hypothesis of the theorem implies $|\Fcal(X,r)|=n$.

We claim that each of conditions (1) through (10) implies that $(X,r)$ is involutive, i.e. $r^2 = id$.

By Proposition \ref{prop:mainthlemma1}
part
(\ref{prop:mainthlemma1_2}),
the following are equivalent 
\[r^2 = id \Longleftrightarrow  \dim_{\textbf{k}} A_2 = \binom{n+1}{2} \Longleftrightarrow \dim_{\textbf{k}}A^{!}_2= \binom{n}{2} \]
In particular,
\[(\ref{new_theorem6}) \Longrightarrow   r^2 = id\]
\[(\ref{new_theorem7}) \Longrightarrow   r^2 = id\]

Each of the conditions  (\ref{new_theorem1}) and  (\ref{new_theorem5}) implies that the algebra $A$ and $P^n=\textbf{k}[x_1, \cdots, x_n]$, the algebra of polynomials in $n$ variables, 
 have the same Hilbert series,  $H_A(z)= H_{P^n}(z)$, see (\ref{eq:correctHilbSeries}).
Therefore $A$ and $P^n$ have the same Hilbert functions
\begin{equation}
\label{eq:Hilb_Fun}
\dim A_m = h_A(m) = h_{P^n}(m) = \binom{n+m-1}{m}, m \geq 1.
\end{equation}

In particular, $\dim A_2 = \binom{n+1}{2}$, and $\dim A_3 = \binom{n+2}{2}$.

Therefore 
\[(\ref{new_theorem1})  \Longrightarrow (\ref{new_theorem6}) \Longrightarrow r^2 = id\]
 \[(\ref{new_theorem5})  \Longrightarrow (\ref{new_theorem6}) \Longrightarrow r^2 = id,\]

By definition every Artin-Shelter regular algebra has polynomial growth and finite global dimension hence
\[ (\ref{new_theorem4})  \Longrightarrow (\ref{new_theorem2})\]

 Our result \cite[Theorem 1.1]{GI12}
implies the equivalence of the three conditions
\[(\ref{new_theorem2}) \Longleftrightarrow (\ref{new_theorem3}) \Longleftrightarrow (\ref{new_theorem3a})\]

It follows that each of the conditions (\ref{new_theorem2}), (\ref{new_theorem3}), (\ref{new_theorem3a})
and (\ref{new_theorem4}) implies that $A$ has exactly $\binom{n}{2}$ relations, so $\dim A_2 = \binom{n+1}{2} $,
and by Proposition \ref{prop:mainthlemma1}
part (2), again, $r$ is involutive.

Obviously, each of the conditions (9) and (10) implies that $r$ is involutive.

We have shown that each of the conditions (1) through (10) implies that
 $(X,r)$ is a square-free nondegenerate involutive quadratic set, that is $(X,r)$ is \emph{a quantum binomial quadratic set}, see \cite[Definition 2.4 (4)]{GI12}.
Now our result
\cite[Theorem 1.2]{GI12}
implies straightforwardly the equivalence of conditions (1) through (10), the equalities (\ref{eq:PBW}) and the fact that $A$ is Koszul and a Noetherian domain.
\end{proof}

The 3-generated \emph{PBW} algebras from Lemma  \ref{lem:n3} are particular cases of the class of \emph{PBW} algebras described by Theorem \ref{thm:new_theorem}.

\section{Square-free quadratic sets with cyclic conditions}
\label{sec:cycliccondition}
In this section we continue the study
of square-free nondegenerate quadratic sets $(X,r)$, the
associated algebra $A(\textbf{k}, X,r)$, and the monoid $S(X,r)$.
In a series of works, see \cite{GI04, GIM08, GIC, CGISm18}, we
have shown that the combinatorial properties of a solution of YBE,
$(X,r)$, are closely related to the algebraic and combinatorial
properties of its associated structures. Solutions satisfying some
of the conditions defined below are of particular interest.
\begin{definition} \cite{GIM08, GI04}
\label{lri&cl}
Let $(X,r)$ be a quadratic set.
\begin{enumerate}
\item  The following are called \emph{cyclic conditions on}  $(X, r)$.
\[\begin{array}{lclc}
 {\rm\bf cl1:}\quad&{}^{(y^x)}x= {}^yx, \quad\text{for all}\; x,y \in
 X;
 \quad\quad&{\rm\bf cr1:}\quad &x^{({}^xy)}= x^y, \quad\text{for all}\; x,y
 \in
X;\\
 {\rm\bf cl2:}\quad&{}^{({}^xy)}x= {}^yx,
\quad\text{for all}\; x,y \in X; \quad\quad &{\rm\bf cr2:}\quad
&x^{(y^x)}= x^y, \quad\text{for all}\; x,y \in X.
\end{array}\]
\item Condition \textbf{lri} on $(X,r)$ is defined as
 \[\begin{array}{lclc}
 \textbf{lri:}\quad
& ({}^xy)^x= y={}^x{(y^x)}, &\text{for all} &
x,y \in X.\quad
\end{array}
\]
In other words \textbf{lri} holds if and only if
$(X,r)$ is nondegenerate, and \[\Rcal_x=\Lcal_x^{-1}, \quad \text{and}\quad\Lcal_x =
\Rcal_x^{-1}.\]
\end{enumerate}
\end{definition}
The cyclic conditions were introduced by the author in \cite{GI94, GI96},
in the context of binomial skew
polynomial algebras and were crucial for the proof that every binomial skew
polynomial algebra defines canonically (via its relations)
a set-theoretic solution of YBE, see \cite{GIVB}.
It is known that  \emph{every  square-free nondegenerate symmetric set $(X,r)$
satisfies the cyclic conditions \textbf{cc} and condition
\textbf{lri}, so the map $r$ is  uniquely determined by the left action:
$r(x,y) = (\Lcal _x(y), \Lcal^{-1}_y(x))$}, see \cite{GI04,
GIM08}.
We shall prove that every  square-free nondegenerate braided set  $(X,r)$ (not necessarily finite, or involutive)
satisfies the cyclic conditions \textbf{cl1} and \textbf{cr1}, see Proposition \ref{prop:cc}.
The main result of this section is  Theorem \ref{thm:proposition1AS}.

\subsection{Combinatorics in square-free quadratic sets with cyclic conditions}
We recall the following useful result.
\begin{fact}
\label{remarkvip}
\cite{GIM08}, Proposition 2.25.
Suppose $(X,r)$ is a quadratic set.  (1) Any two of the following conditions
 imply the remaining third
 condition: (i) $(X,r)$ is involutive;  (ii) $(X,r)$ is nondegenerate and
 cyclic; (iii) $(X,r)$ satisfies \textbf{lri}.
(2)  In particular, if $(X,r)$ satisfies \textbf{cl1}, and \textbf{cr1} then $(X,r)$ is involutive \emph{iff} condition \textbf{lri} holds.
 \end{fact}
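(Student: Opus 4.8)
The plan is to translate everything into the left and right actions $\Lcal_x(y)={}^xy$ and $\Rcal_x(y)=y^x$ (so $x^y=\Rcal_y(x)$), and to run all the implications off two small reusable computations. Throughout I use the standing non-degeneracy hypothesis, which makes each $\Lcal_x$ and each $\Rcal_x$ a bijection of $X$, and I record two reformulations. First, by Remark~\ref{rem_Invol-Sqfree}(1), involutivity of $r$ is equivalent to the pair of identities ${}^{({}^xy)}(x^y)=x$ (call it \textbf{(I)}) and $({}^xy)^{(x^y)}=y$ (call it \textbf{(II)}) for all $x,y\in X$; in action form these read $\Lcal_{{}^xy}(x^y)=x$ and $\Rcal_{x^y}({}^xy)=y$. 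Second, condition \textbf{lri} is precisely $\Rcal_x=\Lcal_x^{-1}$ for all $x$, so \textbf{lri} by itself already forces non-degeneracy. Note also that, in action form, \textbf{cr1} reads $\Rcal_{{}^xy}(x)=\Rcal_y(x)$ and \textbf{cl1} reads $\Lcal_{y^x}(x)=\Lcal_y(x)$.

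The first computation handles the two implications in which \textbf{lri} is available, namely (i)+(iii)$\Rightarrow$(ii) and (ii)+(iii)$\Rightarrow$(i). I claim that, \emph{under} \textbf{lri}, involutivity is equivalent to ``\textbf{cl1} and \textbf{cr1}''. Indeed, applying $\Lcal_{{}^xy}^{-1}=\Rcal_{{}^xy}$ to \textbf{(I)} gives $x^y=\Rcal_{{}^xy}(x)=x^{({}^xy)}$, which is exactly \textbf{cr1}, and the step is reversible; symmetrically, applying $\Rcal_{x^y}^{-1}=\Lcal_{x^y}$ to \textbf{(II)} gives ${}^xy=\Lcal_{x^y}(y)={}^{(x^y)}y$, which is \textbf{cl1} (with $x,y$ interchanged, hence the same condition since it holds for all pairs). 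To upgrade \textbf{cl1}, \textbf{cr1} to the full cyclic package I use a second computation: substituting $w=y^x$ into \textbf{cr1} (written for all $x,w$) and using ${}^x(y^x)=y$ from \textbf{lri} yields $x^y=x^{(y^x)}$, which is \textbf{cr2}; dually, substituting $w={}^xy$ into \textbf{cl1} and using $({}^xy)^x=y$ yields \textbf{cl2}. This gives (i)+(iii)$\Rightarrow$(ii), and reading the equivalence the other way gives (ii)+(iii)$\Rightarrow$(i).

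The remaining implication (i)+(ii)$\Rightarrow$(iii) is the crux, and I expect it to be the main obstacle, because here \textbf{lri} is the conclusion and so is not available to invert $\Lcal$ and $\Rcal$. Instead I feed the cyclic conditions into \textbf{(I)} and \textbf{(II)} and then use surjectivity. From \textbf{cr1}, $\Rcal_b(a)=a^b=a^{({}^ab)}=\Rcal_{{}^ab}(a)$, so \textbf{(I)} becomes $\Lcal_{{}^ab}\bigl(\Rcal_{{}^ab}(a)\bigr)=a$; writing $c={}^ab=\Lcal_a(b)$ this says $\Lcal_c\Rcal_c(a)=a$. For arbitrary $x,y$ I choose $a=y$ and $b=\Lcal_y^{-1}(x)$ (possible by non-degeneracy), so that $c=\Lcal_y(b)=x$, and conclude $\Lcal_x\Rcal_x(y)=y$, i.e.\ ${}^x(y^x)=y$. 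Symmetrically, \textbf{cl1} turns \textbf{(II)} into $\Rcal_d\Lcal_d(b)=b$ with $d=\Rcal_b(a)$, and varying $a$ (with $\Rcal_b$ surjective) gives $\Rcal_x\Lcal_x(y)=y$, i.e.\ $({}^xy)^x=y$. Together these two identities are exactly \textbf{lri}.

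Finally, part (2) falls out for free: both directions of the equivalence ``involutive $\Leftrightarrow$ \textbf{lri}'' used above rely only on \textbf{cl1} and \textbf{cr1} (the first computation gives \textbf{lri}$\Rightarrow$ involutive, and the surjectivity argument gives involutive $\Rightarrow$ \textbf{lri}), never on \textbf{cl2} or \textbf{cr2}. Hence assuming only \textbf{cl1} and \textbf{cr1}, the map $r$ is involutive if and only if \textbf{lri} holds. The only genuinely delicate point throughout is the change-of-variables-plus-surjectivity step in the third paragraph; everything else is a direct rewrite using $\Rcal_x=\Lcal_x^{-1}$ together with the standing bijectivity of the actions.
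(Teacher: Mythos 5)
Your proof is correct; every step checks out (the two reformulations, the equivalence under \textbf{lri} of involutivity with \textbf{cl1} and \textbf{cr1}, the derivation of \textbf{cl2}, \textbf{cr2}, and the change-of-variables-plus-surjectivity argument). For part (2) --- the only part the paper actually proves rather than cites --- your route is essentially the paper's: one direction rewrites the involutivity identities into instances of \textbf{lri} via \textbf{cr1}/\textbf{cl1}, and the other uses non-degeneracy to parametrize an arbitrary element and reduce \textbf{lri} to involutivity. Still, your write-up buys two things the paper's does not. First, the paper defers part (1) entirely to \cite{GIM08}, Proposition 2.25; you prove it, the key extra step being that \textbf{lri} together with \textbf{cl1}, \textbf{cr1} yields \textbf{cr2}, \textbf{cl2} (via the substitutions $w=y^x$ and $w={}^xy$), so the full cyclic package required in (ii) is recovered and not just the two assumed conditions. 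Second, in the direction ``involutive $\Rightarrow$ \textbf{lri}'' the paper's sketch writes $t={}^uv$ and uses this single substitution for both identities of \textbf{lri}; for the second one, $({}^tu)^t=u$, its displayed chain needs ${}^{({}^uv)}u={}^vu$, which is \textbf{cl2} (not among the assumed \textbf{cl1}, \textbf{cr1}), and its final equality $({}^vu)^{{}^uv}=u$ does not match the involutivity identity $({}^vu)^{v^u}=u$. Your dual parametrization --- $c=\Lcal_a(b)$ via left non-degeneracy for ${}^x(y^x)=y$, and $d=\Rcal_b(a)$ via right non-degeneracy (where \textbf{cl1} is exactly the condition that applies) for $({}^xy)^x=y$ --- is the correct repair, so on this point your blind argument is tighter than the paper's own sketch.
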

\begin{sketchofproof}
 For convenience of the reader we shall sketch the proof of (2).
Assume  \textbf{lri}. We shall prove that $r$ is involutive, or equivalently, (\ref{involeq}) holds.
We apply first \textbf{cr1} and \textbf{lri}, and next  \textbf{cl1} and \textbf{lri} to yield:
\[
{}^{{}^uv}{(u^v)}= {}^{{}^uv}{(u^{{}^uv})}= u,\quad ({}^uv)^{u^v} = ({}^{u^v} v)^{u^v} = v, \; \forall u, v \in X.
\]
Conversely, assume that $(X,r)$ is involutive. We shall prove that \textbf{lri} holds
Let $u,t \in X$, we have to show ${}^t{(u^t)} =u$, and $({}^tu)^t=u$.
By the nondegeneracy, there exist $v, w \in X,$ such that $t = {}^uv= w^u$. Then we use \textbf{cr1}, \textbf{cl1}, and (\ref{involeq})
to yield:
\[
{}^t{(u^t)} = {}^{{}^uv}{(u^{{}^uv})} = {}^{{}^uv}{(u^v)}=u,\quad ({}^tu)^t=  ({}^{w^u}u)^{w^u} = ({}^wu)^{w^u} = u.
\quad\quad\quad \Box\]
\end{sketchofproof}

Suppose $(X,r)$ is a finite nondegenerate quadratic set, $S = S(X,r)$.
As we  discussed in the preliminaries, for every integer $m \geq 2$
the group  $\Dcal_m(r)= {}_{gr} \langle
r^{ii+1}, 1 \leq i\leq m-1 \rangle$ acts on the left on $X^m$.
Each element $a \in S$ can be presented as a monomial
$a = \zeta_1\zeta_2 \cdots \zeta_n,\quad \zeta_i \in X$.
Two words $a, b\in \asX$ are equal in $S$ if they have the same length, say $a, b \in X^m,$ and belong to the same orbit of $\Dcal_m(r)= {}_{gr} \langle
r^{ii+1}, 1 \leq i\leq m-1 \rangle$.
 Clearly,  $(X,r)$ is square-free if and only if $\Dcal_m(r)$  acts trivially on $\Delta_m$, $m \geq 2$.

\begin{corollary}
 \label{diaglemma}
 Suppose $(X,r)$ is a square-free quadratic set. Let $x,y \in X$, let $m$ be an integer, $m\geq 2$. If $x^m = y^m$ is an equality in $S$, then $x = y.$
 \end{corollary}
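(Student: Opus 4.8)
The plan is to deduce the statement directly from the orbit description of equality in the monoid $S=S(X,r)$ together with square-freeness. First I would invoke Remark \ref{orbitsinG}: two words of $\asX$ are equal in $S$ if and only if they have the same length, say $m$, and lie in a common orbit of the group $\Dcal_m(r)$ acting on $X^m$. Applied to the hypothesis, $x^m=y^m$ in $S$ means exactly that the two diagonal words $x^m, y^m \in X^m$ belong to the same $\Dcal_m(r)$-orbit.

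Next I would use the observation recorded immediately before the statement, that square-freeness forces $\Dcal_m(r)$ to act trivially on the diagonal $\Delta_m$. Concretely, each generator $r^{ii+1}$ of $\Dcal_m(r)$ acts on $x^m = x\cdots x$ as $r$ on the two (equal) adjacent letters in positions $i$ and $i+1$, and $r(x,x)=(x,x)$ by square-freeness; hence $r^{ii+1}(x^m)=x^m$ for every $i$, so the whole group $\Dcal_m(r)$ fixes $x^m$. Thus the orbit of $x^m$ is the singleton $\{x^m\}$, and likewise that of $y^m$ is $\{y^m\}$.

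Combining the two steps, $y^m$ lies in the orbit $\{x^m\}$, so $y^m = x^m$ as monomials in $X^m$; comparing the first letters of these two length-$m$ words in the free monoid then yields $x=y$, as claimed. I expect no real obstacle here: the content is already packaged in the two recalled facts, and the only point that deserves care is that square-freeness makes every diagonal word a fixed point of the entire group $\Dcal_m(r)$, not merely of the single map $r$. It is also worth remarking that nondegeneracy of $r$ is nowhere used in this argument.
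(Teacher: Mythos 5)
Your proposal is correct and is exactly the argument the paper intends: the corollary follows from Remark \ref{orbitsinG} (equality in $S$ means lying in one $\Dcal_m(r)$-orbit) together with the observation stated immediately before it, that square-freeness makes $\Dcal_m(r)$ act trivially on the diagonal $\Delta_m$, so the orbit of $x^m$ is the singleton $\{x^m\}$ and $y^m=x^m$ forces $x=y$. Your added care in checking that every generator $r^{ii+1}$ (and hence the whole group, inverses included) fixes each diagonal word is precisely the content the paper leaves implicit.
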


 \begin{proposition}
 \label{prop:cc}
 Let $(X,r)$ be a square-free nondegenerate braided set of arbitrary cardinality. Then $(X,r)$ satisfies the cyclic conditions \textbf{cl1} and \textbf{cr1}.
Moreover, $(X,r)$ is involutive \emph{iff} condition \textbf{lri} is in force.
 \end{proposition}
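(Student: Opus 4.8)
The plan is to establish the cyclic conditions \textbf{cl1} and \textbf{cr1} as consequences of the braid relations \textbf{l1}, \textbf{r1}, and \textbf{lr3} from Remark \ref{ybe}, using the square-free hypothesis to feed diagonal elements into these identities. The key observation is that square-freeness gives us ${}^xx = x$ and $x^x = x$ for all $x$, together with the cancellation rules in \eqref{rightactioneq1} coming from nondegeneracy. My first step would be to prove \textbf{cl1}, namely ${}^{(y^x)}x = {}^yx$. I would apply condition \textbf{l1}, ${}^x({}^yz) = {}^{{}^xy}({}^{x^y}z)$, specializing the variables so that one of the inner actions collapses via square-freeness. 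A natural choice is to set $z = x$ in an appropriately permuted instance, or to substitute into \textbf{l1} a diagonal term: for example, taking the identity evaluated at arguments that force ${}^{x^y}z$ or ${}^xy$ to become a fixed point. The dual condition \textbf{cr1}, $x^{({}^xy)} = x^y$, should follow by the completely symmetric argument applied to \textbf{r1}, $(x^y)^z = (x^{{}^yz})^{y^z}$.

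Concretely, for \textbf{cr1} I would start from \textbf{r1} and set $z = y$, obtaining $(x^y)^y = (x^{{}^yy})^{y^y} = (x^y)^y$ trivially, so that substitution is not directly useful; instead the productive specialization is to choose the arguments so that the superscript becomes a repeated letter. The better route is to substitute into \textbf{r1} a value making ${}^yz = y$, which by the second line of \eqref{rightactioneq1} forces $z = y$ — again circular. So the real work is to find the substitution in which square-freeness genuinely simplifies one side. I expect the correct manipulation to set $z=x$ in \textbf{lr3}, ${({}^xy)}^{({}^{x^y}z)} = {}^{(x^{{}^yz})}(y^z)$, or to combine \textbf{l1} with the square-free evaluation ${}^xx = x$ after a preliminary relabeling; the square-free identity then annihilates one nested action and leaves precisely the cyclic identity. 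I would carry out this bookkeeping for \textbf{cl1} first, then transcribe it verbatim with left/right roles exchanged for \textbf{cr1}.

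Once \textbf{cl1} and \textbf{cr1} are in hand, the final clause of the proposition — that $(X,r)$ is involutive if and only if \textbf{lri} holds — is immediate from Fact \ref{remarkvip}(2), which states exactly this equivalence under the hypothesis that \textbf{cl1} and \textbf{cr1} are satisfied. Thus no further argument is needed for the last sentence beyond invoking that fact.

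The main obstacle I anticipate is identifying the precise specialization of the braid identities that makes the square-free collapse work cleanly, rather than producing a tautology. Several of the obvious substitutions (setting two arguments equal) degenerate into trivialities because square-freeness makes both sides reduce identically; the useful substitutions are the ones where a \emph{compound} expression such as ${}^xy$ or $x^y$ happens to equal a letter that is then acted upon trivially. Getting the variable matching right, and then deploying the nondegeneracy cancellation in \eqref{rightactioneq1} to strip the outer action, is the delicate part. I would verify each step against the explicit cancellation rules to ensure that every simplification is licensed by square-freeness plus nondegeneracy and not merely asserted.
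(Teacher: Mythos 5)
There is a genuine gap: you never actually derive \textbf{cl1} or \textbf{cr1}. What you have written is a search strategy --- ``find the substitution in which square-freeness genuinely simplifies one side'' --- and you explicitly defer that identification as an anticipated obstacle. The one substitution you carry out in full ($z=y$ in \textbf{r1}) is, as you yourself note, a tautology, and your other guesses ($z=x$ in \textbf{lr3}, or an unspecified ``preliminary relabeling'' of \textbf{l1}) are left unexecuted. Since the entire content of the proposition is precisely this derivation (the final clause about \textbf{lri} is indeed immediate from Fact \ref{remarkvip}(2), exactly as you say), the proof is missing its core step.

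The substitutions that work are the ones you did not find, and they sit in specific slots. For \textbf{cl1}: put $z=y$ in \textbf{l1} (equivalently, apply the YBE to a triple with the repeated letter in the \emph{last two} positions). Square-freeness gives ${}^yy=y$, so \textbf{l1} collapses to ${}^xy = {}^{{}^xy}{({}^{(x^y)}y)}$; writing $u={}^xy$ and $v={}^{(x^y)}y$, this reads ${}^uv=u$, and the second line of (\ref{rightactioneq1}) (nondegeneracy plus square-freeness) forces $v=u$, i.e. ${}^{(x^y)}y={}^xy$, which is \textbf{cl1}. Dually, for \textbf{cr1} the repeated letter must go in the \emph{first two} positions: put $y=x$ in \textbf{r1}, which collapses to $x^z = \bigl(x^{({}^xz)}\bigr)^{(x^z)}$, and the cancellation $t^z=z \Rightarrow t=z$ from (\ref{rightactioneq1}) gives $x^{({}^xz)}=x^z$. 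Note the asymmetry: your attempt $z=y$ in \textbf{r1} degenerates exactly because the repetition sits in the wrong slots for that identity. Once completed this way, the argument coincides with the paper's proof: the paper runs the Yang--Baxter diagram (\ref{ybediagram}) on $axx$ (resp.\ $xxa$), concludes that $({}^ax,\,{}^{(a^x)}x)$ (resp.\ $(x^{({}^xa)},\,x^a)$) is an $r$-fixed point, and invokes Corollary \ref{sqfreerem} --- which is the same nondegeneracy-plus-square-freeness cancellation packaged as ``fixed points are diagonal.''
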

 \begin{proof}
Let $a,x \in X$. Consider the "Yang-Baxter" diagram on monomials of length $3$ in $X^3$.

\begin{equation}
\label{ybediagram}
\begin{CD}
axx \quad\quad\quad @>r^{23}>> \quad\quad\quad axx\\
@V  r^{12} VV @VV r^{12} V\\
({}^ax) (a^x) x @. ({}^ax) (a^x) x \\
@V r^{23} VV @VV r^{23} V\\
({}^ax)  ({}^{(a^x)}x)(a^{xx}) \quad\quad\quad @>r^{12}>> \quad\quad\quad ({}^ax)  ({}^{(a^x)}x)(a^{xx}).
 \end{CD}
\end{equation}
It follows that $r({}^ax , {}^{(a^x)}x) = ({}^ax , {}^{(a^x)}x)$
 and therefore, by Corollary \ref{sqfreerem}, ${}^{(a^x)}x = {}^ax$, which proves \textbf{cl1}.

Similarly, a YB diagram starting with the monomial $xxa$ implies $r(x^{({}^xa)},x^a) = (x^{({}^xa)},x^a)$, hence $x^{({}^xa)}=x^a$, which proves \textbf{cr1}.
Now Fact \ref{remarkvip} implies straightforwardly that $(X,r)$ is involutive \emph{iff}  \textbf{lri} holds.
 \end{proof}

The action of the infinite dihedral group  $\Dcal$ on $X^3$ is of particular importance in this
section. Assuming that $(X,r)$ is a nondegenerate square-free quadratic set
we shall
find some counting formulae and inequalities involving the orders of
the $\Dcal$-orbits in $X^3$, and their number.
As usual, the orbit of a monomial $\omega \in X^3$ under the  action of $\Dcal$
will be denoted by  $\cO= \cO(\omega)$.

\begin{definition}
\label{squarefreeorbitdef}
We  call a $\Dcal$-orbit $\cO$ \emph{square-free} if
\[\cO\bigcap (\Delta_2 \times X \bigcup X \times \Delta_2) = \emptyset.\]
A monomial $\omega\in X^3$ is \emph{square-free} in $S$ if its
orbit $\cO(\omega)$ is square-free.
\end{definition}

\begin{notation}
\label{notation:E(O)}
Denote 
\[
E(\cO): = \cO\bigcap((\Delta_2 \times X\bigcup X \times \Delta_2)\backslash \Delta_3).
\]
\end{notation}

\begin{theorem}
\label{thm:proposition1AS}
Suppose $(X,r)$ is a nondegenerate square-free quadratic set, of finite order $|X|=n$.
  \begin{enumerate}
\item
\label{thm:proposition1AS1}
Let $\cO$ be a $\Dcal$-orbit in $X^3$. The following implications hold.
\[
\begin{array}{lllll}
\emph{\textbf{(i)}}&\quad \cO \bigcap \Delta_3 \neq \emptyset&\Leftrightarrow
& |\cO|=1.&\quad\quad\quad\quad\quad\quad\\
\emph{\textbf{(ii)}}&\quad E(\cO)
\neq \emptyset & \Longrightarrow& |\cO| \geq 3.&
\end{array}
 \]
In this case we say that \emph{$\cO$ is a $\Dcal$-orbit of type \textbf{(ii)}}.
\[
\begin{array}{llll}
\emph{\textbf{(iii)}}&\quad \cO
 \bigcap(\Delta_2 \times X\bigcup X \times \Delta_2) = \emptyset&
 \Longrightarrow & |\cO| \geq 6.
  \end{array}
\]
Recall that in this case $\cO$ is called \emph{a square-free orbit}, see Definition \ref{squarefreeorbitdef}.
\item
\label{thm:proposition1AS2}
The following two conditions are equivalent
\begin{enumerate}
\item $(X,r)$ is involutive and satisfies the cyclic conditions \textbf{cl1} and \textbf{cr1};
\item Every  orbit $\cO$ of type \textbf{(ii)}
contains exactly 3 distinct elements.
\end{enumerate}
\end{enumerate}
\end{theorem}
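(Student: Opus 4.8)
The plan is to handle the three implications of part (\ref{thm:proposition1AS1}) by direct analysis of the $\Dcal$-orbit generated by a convenient representative, and then to read off the equivalence of part (\ref{thm:proposition1AS2}) from the way such orbits close up. Throughout I use two facts about a square-free $(X,r)$: by Corollary \ref{sqfreerem}, for $p,q\in X$ one has $r(p,q)=(p,q)$ \emph{iff} $p=q$, so $r^{12}$ fixes a word $w=w_1w_2w_3\in X^3$ exactly when $w_1=w_2$ and $r^{23}$ fixes it exactly when $w_2=w_3$; and $r^{12}$ preserves the last letter of a word while $r^{23}$ preserves its first letter. For \textbf{(i)}, if $\Ocal\cap\Delta_3\neq\emptyset$ it contains some $xxx$, fixed by both $r^{12}$ and $r^{23}$, so $\Ocal=\{xxx\}$; conversely a one-element orbit is fixed by both, forcing $w_1=w_2=w_3$. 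For \textbf{(ii)}, up to the left--right symmetry take $\omega_0=xxc\in\Ocal$ with $c\neq x$. Then $r^{12}$ fixes $\omega_0$, while $\omega_1:=r^{23}\omega_0=x\,({}^xc)\,(x^c)$ and $\omega_2:=r^{12}\omega_1$ are distinct from $\omega_0$ and from each other: non-degeneracy of $\Lcal_x$ gives ${}^xc\neq x$, which separates $\omega_1$ from $\omega_0$ and forces the first two letters of $\omega_2$ to differ from both. Hence $|\Ocal|\geq 3$.

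For \textbf{(iii)} the key structural fact is that a square-free orbit has \emph{no two equal adjacent letters in any of its elements}, so both $r^{12}$ and $r^{23}$ act on $\Ocal$ without fixed points. Starting from $\omega_0=abc$ (so $a\neq b$, $b\neq c$) I would lay out a ``Yang--Baxter hexagon'': $\omega_1=r^{12}\omega_0$, $\omega_2=r^{23}\omega_1$ on one side, $\omega_{-1}=r^{23}\omega_0$, $\omega_{-2}=r^{12}\omega_{-1}$ on the other, plus one further step, and prove these six words are pairwise distinct. The bookkeeping is driven by the invariants above: $\omega_0,\omega_1$ share last letter $c$, while $\omega_{-1},\omega_{-2}$ share last letter $b^c$, which differs from $c$ since $b\neq c$ and $\Rcal_c$ is injective; within each $r^{23}$-step the first letter is frozen, and non-degeneracy rules out the remaining coincidences. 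I expect \emph{this distinctness bookkeeping to be the main obstacle}: because the braid relation is \emph{not} assumed, the two length-three paths need not reconverge, so one cannot invoke a closed hexagon, and a few coincidences (e.g. $a^b=b$) have to be excluded by short separate non-degeneracy arguments.

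For part (\ref{thm:proposition1AS2}), direction (a)$\Rightarrow$(b): assume $r$ involutive with \textbf{cl1}, \textbf{cr1}, and take a type \textbf{(ii)} orbit with representative $\omega_0=xxc$, $c\neq x$ (the case $axx$ is symmetric via \textbf{cl1}). With $\omega_1,\omega_2$ as above, \textbf{cr1} gives $x^{{}^xc}=x^c$, so $\omega_2$ has equal last two letters and is therefore $r^{23}$-fixed; involutivity of $r^{12},r^{23}$ then closes the orbit onto $\{\omega_0,\omega_1,\omega_2\}$, which are distinct by \textbf{(ii)}, so $|\Ocal|=3$.

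The substantive direction is (b)$\Rightarrow$(a). Assume every type \textbf{(ii)} orbit has exactly three elements, fix $x$ and $c\neq x$, and run the orbit of $\omega_0=xxc$, which is then $\{\omega_0,\omega_1,\omega_2\}$. Since $r^{12}$ fixes $\omega_0$, permutes $\Ocal$, and sends $\omega_1\mapsto\omega_2$, it acts as the transposition $(\omega_1\,\omega_2)$; hence $(r^{12})^2$ fixes $\omega_1$, i.e. $r^2(x,{}^xc)=(x,{}^xc)$. As $c$ ranges over $X\setminus\{x\}$, the element ${}^xc$ ranges bijectively over $X\setminus\{x\}$ (square-freeness gives ${}^xx=x$, non-degeneracy gives bijectivity of $\Lcal_x$), so $r^2(x,u)=(x,u)$ for all $u$ and all $x$; thus $r$ is \emph{involutive}. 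Next, $r^{23}$ permutes $\Ocal$ with $\omega_0\mapsto\omega_1$, and the $3$-cycle option is impossible because $r^{23}$ fixes first letters whereas the first letter ${}^x({}^xc)$ of $\omega_2$ is $\neq x$; hence $r^{23}=(\omega_0\,\omega_1)$ fixes $\omega_2$, which forces $x^{{}^xc}=x^c$, i.e. \textbf{cr1}, and the mirror orbits $axx$ yield \textbf{cl1}. This gives (a); one could alternatively finish via Fact \ref{remarkvip} once involutivity is known, but the orbit-closure argument produces \textbf{cl1} and \textbf{cr1} directly.
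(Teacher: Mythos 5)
Your proposal is correct, and for parts (1)(i)--(iii) and the implication (a)$\Rightarrow$(b) of part (2) it is essentially the paper's own proof: the same three explicit elements for a type \textbf{(ii)} orbit, the same six alternating images of $abc$ grouped into three pairs separated by their common tails $c$, $b^c$, $(a^b)^c$ (with exactly the exclusions $b^c\neq c$, $a^b\neq b$, $a^b\neq c$ that the paper makes, so your sketch of the bookkeeping completes without surprises), and the same closure of the orbit $\{\omega_0,\omega_1,\omega_2\}$ under $r^{12},r^{23}$ using \textbf{cr1} and involutivity. The one genuine divergence is how involutivity is extracted in (b)$\Rightarrow$(a). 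The paper argues by contradiction: if $r^2(xc)\neq xc$ for some pair, then $\Ocal(xxc)$ contains a fourth distinct element $x\,r^2(xc)$, violating (b). You instead read involutivity off the permutation structure: $r^{12}$ restricted to the three-element orbit fixes $\omega_0$ and moves $\omega_1$ to $\omega_2$, hence is the transposition $(\omega_1\,\omega_2)$, so $(r^{12})^2$ fixes $\omega_1$, i.e. $r^2(x,{}^xc)=(x,{}^xc)$; since $\Lcal_x$ restricts to a bijection of $X\setminus\{x\}$ (square-freeness plus non-degeneracy), this yields $r^2=\id$ on all of $X\times X$. Your route is slightly more economical (no auxiliary fourth element and no case analysis), at the price of invoking surjectivity of $\Lcal_x$ on $X\setminus\{x\}$; the subsequent extraction of \textbf{cr1} and \textbf{cl1}, from $r^{23}$ being forced to act as $(\omega_0\,\omega_1)$ and hence fix $\omega_2$, is then identical in substance to the paper's check that $r^{23}(v_3)$ can equal neither $v_1$ nor $v_2$.
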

\begin{proof}
Condition (\ref{thm:proposition1AS1}) \textbf{(i)} is clear.

(\ref{thm:proposition1AS1}) \textbf{(ii)}.  Assume that $E(\cO)
\neq \emptyset$. Then $\cO$
contains an element of the shape $\omega = xxy,$ or $\omega =
xyy$, where $x, y \in X, x \neq y.$ Without loss of generality we
can assume $\omega = xxy \in \cO.$
   We look at a fragment of the "Yang-Baxter" diagram starting with $\omega$:
   \begin{equation}
\label{orb2}
\omega=\omega_1 = xxy \longrightarrow^{r^{23}} \omega_2 = x ({}^xy) (x^y) \longrightarrow^{r^{12}}  \omega_3= ({}^{x^2}y) (x^{{}^xy}) (x^y) \longrightarrow \cdots .
\end{equation}
Note that the first three elements $\omega_1, \omega_2,
\omega_3$ are distinct monomials in $X^3$. Indeed, $x \neq y$
implies $r(xy)\neq xy$ in $X^2$, (see Corollary \ref{sqfreerem}), so $\omega_2 \neq \omega_1$. By
assumption $(X,r)$ is square-free, so ${}^xx=x$, and
$y \neq x,$ implies  ${}^xy \neq x,$ by the nondegeneracy.  Therefore
$r(x({}^xy))\neq x({}^xy),$ and $\omega_3 \neq
\omega_2.$ Furthermore, $\omega_3 \neq \omega_1.$ Indeed, if we
assume $x = {}^{x^2}y = {}^x{({}^xy)}$ then by
(\ref{rightactioneq1}) one has ${}^xy = x$, and therefore $y = x,$
a contradiction. It follows that $\mid\cO\mid \geq 3.$

(\ref{thm:proposition1AS1}) \textbf{(iii)}.
Suppose  $\cO= \cO(xyz)$ is a square-free $\Dcal$-orbit in $X^3$. Consider the set \[ O_1=  \{ v_i \mid 1 \leq i \leq 6 \}\subseteq \cO\] consisting of the first six elements of the  "Yang-Baxter" diagram
\begin{equation}
\label{ybediagram3}
\begin{CD}
v_1= xyz \quad\quad\quad @>r^{12}>> \quad\quad\quad ({{}^xy}x^y)z = v_2\\
@V  r^{23} VV @VV r^{23} V\\
  v_3=x({{}^yz}y^z)@. ({{}^xy}) ({}^{x^y}z)(x^y)^z= v_5\\
@V r^{12} VV @VV r^{12} V\\
\kern -80pt v_4={}^{x}{({}^yz)}(x^{{}^yz})(y^z) @.
\kern-100pt \quad\quad \quad\quad\quad\quad\quad\quad\quad\quad    [{}^{{}^xy}{({}^{x^y}z)}][({}^xy)^{({}^{x^y}z)}][(x^y)^z]= v_6.
 \end{CD}
\end{equation}
Clearly,
\[
O_1 = U_1\bigcup U_3\bigcup U_5, \quad\text{where}\quad U_j = \{v_j, \;v_{j+1}  = r^{12}(v_j) \},\quad j = 1,3,5.
\]
We claim that $U_1, U_3, U_5$ are pairwise disjoint sets, and each of them has order 2.
Note first that since  $v_j$ is  a square-free monomial, for each $j = 1, 3, 5$, one has
$v_j\neq r_{12}(v_j)= v_{j+1}$,
therefore
$|U_j| = 2, \;\; j = 1,3,5.$
The monomials in each $U_j$ have the same "tail". More precisely,
$v_1 = (xy)z, v_2=r(xy)z$, have a "tail" $z$, the tail of $v_3$,
and $v_4$ is $y^z$, and the  tail of $v_5$, and $v_6$ is
$(x^y)^z$. It will be enough to show that the three elements $z,
y^z,  (x^y)^z \in X$ are pairwise distinct. But $\cO(xyz)$ is
square-free, so $y\neq z$ and by (\ref{rightactioneq1}) $y^z \neq
z$.
 Furthermore $v_2 = ({}^xy)(x^y)z \in \cO(xyz)$, so $x^y \neq y$ and $x^y \neq z$. Now by the nondegeneracy
one has
\[\begin{array}{l}
x^y \neq z \Longrightarrow (x^y)^z \neq z\quad\quad
x^y \neq y \Longrightarrow  (x^y)^z \neq y^z.
\end{array}
\]
Therefore the three elements $z, y^z, (x^y)^z \in X$
occurring as tails in $U_1, U_3, U_5$, respectively,  are pairwise
distinct, so the three sets are pairwise disjoint. This implies
$6= |O_1| \leq |\cO|.$

(\ref{thm:proposition1AS2}). (a) $\Rightarrow$ (b).
Suppose $(X,r)$ is involutive and satisfies \textbf{cl1} and \textbf{cr1}. Let $\cO$ be an orbit of type \textbf{(ii)}. Without loss of generality we may assume
$\cO = \cO(xxy)$. Then (since $r$ is involutive)
each arrow in the diagram (\ref{ybediagram4})
is pointed in both directions, i.e. the arrows have the shape $\longleftrightarrow$, or $\updownarrow$.
\begin{equation}
\label{ybediagram4}
\begin{CD}
v_1= xxy \quad @>r^{12}>> \quad xxy = v_1  \\
@V  r^{23} VV @VV r^{23} V\\
  v_2=x({{}^xy}x^y)@. x({{}^xy}x^y)= v_2\\
@V r^{12} VV @VV r^{12} V\\
\kern -80pt v_3=({}^{x}{({}^xy)}) (x^{{}^xy}) (x^y)= ({}^{xx}y) x^yx^y \quad @>r^{23}>>\quad
({}^{x}{({}^xy)}) (x^{{}^xy}) (x^y)= ({}^{xx}y) (x^y) (x^y) =v_3
 \end{CD}
\end{equation}
It is clear that the diagram contains
all elements of $\cO$, hence $|\cO| = 3.$

(b) $\Rightarrow$ (a). Suppose every orbit $\cO$ of type \textbf{(ii)} contains exactly three elements.
The diagram
\begin{equation}
\label{ybediagram3a}
\begin{CD}
v_1= xxy \quad\quad\quad @>r^{12}>> \quad\quad\quad xxy = v_1\\
@V  r^{23} VV @VV r^{23} V\\
  v_2=x({{}^xy}x^y)@.  x({{}^xy}x^y)=v_2\\
@V r^{12} VV @VV r^{12} V\\
\kern -80pt v_3={}^{x}{({}^xy)}(x^{{}^xy})(x^y) @.
\kern-100pt \quad\quad \quad\quad\quad\quad\quad\quad\quad\quad    {}^{x}{({}^xy)}(x^{{}^xy})(x^y)= v_3.
 \end{CD}
\end{equation}
contains three distinct elements of $\cO$, and therefore it contains the whole orbit $\cO$.

The element
$r^{23}(v_3) = {}^{x}{({}^xy)} r((x^{{}^xy})(x^y)) \in \cO =\{v_1, v_2, v_3\}$. It is clear that $r^{23}(v_3)\neq v_1$ and $r^{23}(v_3)\neq v_2$, so
$r^{23}(v_3) =v_3$. This
implies that $ r((x^{{}^xy})(x^y))= (x^{{}^xy})(x^y)$, hence $(x^{{}^xy})(x^y) \in \Fcal(X,r)= \Delta_2$. Therefore
 \[x^{{}^xy}=x^y,  \forall x, y \in X,\]
 that is \textbf{cr1} holds.
An analogous argument proves \textbf{cl1} (in this case we work with a "YB" diagram with a left top element $v_1 = xyy$).

Notice  that if there exists a pair $(x,y)$ with $r^2(xy) \neq xy$ then the orbit $\cO(xxy)$ contains (but is not limited to) the following four distinct elements
\[
\begin{array}{l}
v_1 = xxy, \quad v_2 = r^{23}(v_1) = xr(xy) = x({{}^xy}x^y), \\
v_3 = r^{12}(v_2)= ({}^{xx}y) (x^{{}^xy}) (x^y), \quad v_4 = r^{23}(v_2)= x r^2(xy)
\end{array}\]
which contradicts (b).
It follows that $r$ is involutive. We have proven (b) $\Rightarrow$ (a).
\end{proof}

\subsection{More on square-free quadratic sets with cyclic conditions}
We end up the section with new results on square-free quadratic
sets which will be used to describe the contrast between
involutive and noninvolutive solutions of YBE in the next section.

\begin{proposition}
\label{prop:A!_and_YBE}
Suppose $(X,r)$ is a finite nondegenerate square-free quadratic set with  $|X|=n$ that satisfies
\textbf{cl1} and \textbf{cr1}.
Then $(X,r)$ is a symmetric set if and only if
 $\dim_{\textbf{k}} A^{!}_3 = \binom{n}{3}.$
\end{proposition}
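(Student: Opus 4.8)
The plan is to treat the two implications separately: the forward one via Koszulity and the Hilbert series, and the converse via a combinatorial computation of $\dim_{\textbf{k}} A^{!}_3$ in terms of the $\Dcal$-orbits in $X^3$.

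\emph{Forward direction.} Suppose $(X,r)$ is a symmetric set. Being a finite square-free involutive solution, its algebra $A$ is Koszul by \cite{GIVB}, and by Theorem \ref{thm:new_theorem} (condition (\ref{new_theorem9}) $\Leftrightarrow$ (\ref{new_theorem1})) it has Hilbert series $H_A(z)=1/(1-z)^n$. Since $A$ is Koszul, Fact \ref{factKoszulvip}(2) gives $H_A(z)H_{A^{!}}(-z)=1$ via (\ref{HilbKoszuleq}), whence $H_{A^{!}}(z)=(1+z)^n$ and in particular $\dim_{\textbf{k}} A^{!}_3=\binom{n}{3}$. First I would record this, as it also pins down the target value.

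\emph{Converse, set-up.} For the converse I would argue the contrapositive: if $(X,r)$ is not involutive I will show $\dim_{\textbf{k}} A^{!}_3\neq\binom{n}{3}$. Write $A=\textbf{k}\asX/(\textbf{R}(r))$, set $V=A_1$ and let $R=\Span_{\textbf{k}}\textbf{R}(r)\subset V\otimes V$ be the quadratic relation space. By the standard quadratic-duality identity (\cite{PP}) one has $(A^{!}_3)^{*}\cong (R\otimes V)\cap(V\otimes R)$. Since $R$ is spanned by the $r$-orbit differences $w-r(w)$ in $X^2$, the subspace $R\otimes V$ sits inside $\ker(V^{\otimes 3}\to A_3)$ and equals $\ker\pi_{12}$, where $\pi_{12}$ collapses each $r^{12}$-orbit; similarly $V\otimes R=\ker\pi_{23}$. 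Because each $\Dcal$-orbit $\Oscr\subseteq X^3$ is connected under $r^{12},r^{23}$, the intersection splits as a direct sum over orbits and on $\Oscr$ has dimension $|\Oscr|-a_{\Oscr}-b_{\Oscr}+1$, the first Betti number of the connected bipartite incidence graph joining the $a_{\Oscr}$ orbits of $r^{12}$ to the $b_{\Oscr}$ orbits of $r^{23}$. Hence
\[
\dim_{\textbf{k}} A^{!}_3=\sum_{\Oscr}\bigl(|\Oscr|-a_{\Oscr}-b_{\Oscr}+1\bigr).
\]

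\emph{Converse, orbit analysis.} Next I would evaluate this sum using the trichotomy of Theorem \ref{thm:proposition1AS}(\ref{thm:proposition1AS1}). Since $r$ is injective and square-free it maps off-diagonal pairs to off-diagonal pairs, so on a square-free (type \textbf{(iii)}) orbit both $r^{12}$ and $r^{23}$ act without fixed points; thus $a_{\Oscr},b_{\Oscr}\leq|\Oscr|/2$ and such an orbit contributes at least $1$, with equality exactly when both actions are by transpositions. Type \textbf{(i)} orbits contribute $0$. For a type \textbf{(ii)} orbit of size $3$ one has $a_{\Oscr}=b_{\Oscr}=2$, giving contribution $0$. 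If $(X,r)$ is not involutive, then by Theorem \ref{thm:proposition1AS}(\ref{thm:proposition1AS2}) some type \textbf{(ii)} orbit $\Oscr(xxy)$ has at least $4$ elements; using \textbf{cl1} and \textbf{cr1} I would track the explicit chain $v_1=xxy$, $v_2=x({}^xy)(x^y)$, $v_3=({}^{x^2}y)(x^y)(x^y)$, $v_4=x\,r^2(xy),\dots$ and exhibit an extra independent cycle in its incidence graph, raising the total above the involutive value $\binom{n}{3}$.

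\emph{Main obstacle.} The hard part is exactly this final global comparison: passing from the \emph{local} fact that an oversized type \textbf{(ii)} orbit carries a positive Betti contribution to the \emph{global} strict inequality $\dim_{\textbf{k}} A^{!}_3>\binom{n}{3}$, since both the number of type \textbf{(iii)} orbits and their individual Betti numbers may shift when $r^2\neq\id$. I would control this by anchoring the bookkeeping to the fixed count $n(n-1)(n-2)$ of all-distinct monomials and comparing the orbit-graph Euler characteristic against the involutive reference configuration (type \textbf{(iii)} orbits hexagonal with $b_1=1$, type \textbf{(ii)} orbits triangular with $b_1=0$), so that any deviation forced by non-involutivity is sign-definite. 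Together with the automatic identity $\dim_{\textbf{k}} A^{!}_2=q$ from (\ref{dimA2eq}) and Proposition \ref{prop:mainthlemma1}(\ref{prop:mainthlemma1_2}), this gives $\dim_{\textbf{k}} A^{!}_3=\binom{n}{3}\Rightarrow r^2=\id$, completing the converse.
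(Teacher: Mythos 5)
Your forward direction is correct: Koszulity of $A$ for finite square-free involutive solutions (\cite{GIVB}) together with $H_A(z)=1/(1-z)^n$ and (\ref{HilbKoszuleq}) forces $H_{A^{!}}(z)=(1+z)^n$, hence $\dim_{\textbf{k}}A^{!}_3=\binom{n}{3}$; this is in substance what the paper imports from \cite{GI12}, Theorem 2. Your set-up for the converse is also internally coherent for the standard quadratic dual: $(A^{!}_3)^{*}\cong (R\otimes V)\cap(V\otimes R)$ does split over $\Dcal_3$-orbits, each orbit contributing the cycle rank $|\Oscr|-a_{\Oscr}-b_{\Oscr}+1$ of its connected bipartite incidence graph, and your evaluation of types \textbf{(i)}, \textbf{(ii)} of size $3$, and \textbf{(iii)} is right. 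This is a genuinely different route from the paper, which never computes cycle ranks: the paper reads $\dim A^{!}_3$ as the number of square-free $\Dcal_3$-orbits (via its description of $\textbf{R}^{\bot}$ in Remark \ref{remark:Groebner1}), shows that every type \textbf{(ii)} orbit lies in the set $W$ of vanishing words, so that $|W|\geq 3n(n-1)$ always and $|W|>3n(n-1)$ once some $r^2(xa)\neq xa$ (the word $x\,r^2(xa)$ is an extra, non-transition element), and then gets the exact numerical contradiction $n^3=|\Delta_3|+|W|+\sum_s|\Ocal^{(s)}|>n+3n(n-1)+6\binom{n}{3}=n^3$.

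The genuine gap is precisely the step you flag as the main obstacle, and your proposed fix does not close it. When $r^2\neq\id$ there are two \emph{opposing} effects on $\sum_{\Oscr}\bigl(|\Oscr|-a_{\Oscr}-b_{\Oscr}+1\bigr)$: oversized type \textbf{(ii)} orbits may gain positive contributions, but the square-free orbits --- the only ones guaranteed to contribute at least $1$ --- can shrink in number or vanish altogether; for a dihedral quandle of prime order the minimality condition \textbf{M} holds and $X^3$ contains \emph{no} square-free $\Dcal_3$-orbits at all, by Proposition \ref{prop:minimalityprop}. So the deviation from your involutive reference configuration is not sign-definite term by term, and anchoring to the count of all-distinct monomials cannot repair this, because those monomials are redistributed between type \textbf{(ii)} and type \textbf{(iii)} orbits in an uncontrolled way. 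Even your local claim is unproven: in $\Oscr(xxy)$ the entire $r^{23}$-orbit $x\cdot\{r^k(xy)\}$ is a \emph{single} vertex of degree $\geq 3$, and a priori its edges may reach pairwise distinct $r^{12}$-orbit vertices without closing a cycle, so the chain $v_1,v_2,v_3,v_4$ by itself exhibits no extra independent cycle. Two further points. First, even granting $r^2=\id$, the converse is not finished: a symmetric set is involutive \emph{and} braided, and an involutive square-free non-degenerate quadratic set with \textbf{cl1}, \textbf{cr1} (a quantum binomial set) need not obey the YBE; like the paper, you must still invoke \cite{GI12}, Theorem 2 (equivalently Theorem \ref{thm:new_theorem}) together with the hypothesis $\dim A^{!}_3=\binom{n}{3}$ to conclude YBE. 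Second, a caution: in the non-involutive regime your $\dim A^{!}_3$ and the paper's are different numbers, since the annihilator of an $r$-orbit of length $l\geq 3$ is spanned by the single functional $\sum_{w\in\Ocal}\xi_w$ rather than by the pairwise sums of Remark \ref{remark:Groebner1}; for the dihedral quandle of order $3$ your formula gives $3$, while the number of square-free orbits is $0$. So a completed version of your argument would establish the statement for the standard Koszul dual, which is not literally the quantity manipulated in the paper's own proof.
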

\begin{proof}
By hypothesis \textbf{cl1} and \textbf{cr1} hold.
Assume $\dim_{\textbf{k}} A^{!}_3 = \binom{n}{3}$. We have to show that $(X,r)$ is a symmetric set.
We shall prove first that $(X,r)$ is involutive, and therefore it is a quantum binomial set, see
Definition \ref{def:quadraticsets_All} (iv).

As usual, we study the $\Dcal_3(r)$-orbits $\cO$. Our assumption
implies that $X^3$ contains exactly $\binom{n}{3}$ square-free orbits, $\cO^{(s)}, 1 \leq s \leq \binom{n}{3}$.
By Theorem \ref{thm:proposition1AS} part (\ref{thm:proposition1AS1}.iii)
the length of each square-free orbit satisfies
\begin{equation}
\label{lengthorbit}
|\cO^{(s)}|= l_s \geq 6, \;\;  \forall\; \; 1 \leq s \leq \binom{n}{3}.
\end{equation}

Denote by $W$ the set of all words $w \in X^3\setminus \Delta_3$ which vanish  in $A^{!}_3$.
Note first that if $y,b \in X, y\neq b$, the orbit $\cO (yyb) \subset W$ contains the three distinct monomials occurring in the following diagram

\[u=yyb \longrightarrow _{r^{23}} y ({}^yb y^b) \longrightarrow _{{r^{12}}} ({}^{yy}b) (y^{{}^yb}.y^b) =({}^{yy}b) (y^b.y^b).\]

 We shall call the word $r^{23}(yyb)= y ({}^yb y^b)$ "\emph{the transition element} \emph{for the pair of words} $u= yyb, czz=r^{12}\circ r^{23}(u)\in W$". It is clear that each pair $y,b\in X, y \neq b$ determines uniquely the three elements $yyb, r^{23}(yyb), czz = r^{12}\circ r^{23} (yyb)\in W$.

Note that if $(y,b)\neq (t,c)$ then the transition elements $y ({}^yb y^b) \neq t ({}^tc t^c)$.
Indeed, the inequality is straightforward if $t \neq y$. If $t =y, b \neq c,$ then by the nondegeneracy, one has
${}^yb \neq  {}^yc = {}^tc.$
So $W$ contains $n(n-1)$ disjoint triples $yyb, r^{23}(yyb), r^{12}\circ r^{23} (yyb)= czz$,
therefore $|W|\geq 3n(n-1)$.

Assume that $(X,r)$ is not involutive, we shall prove that $|W|> 3n(n-1)$.

Clearly, there exist a pair $x,a \in X, x\neq a$, such that $r^2(x,a) \neq (x,a)$
so the words $xa, r(xa), r^2(xa)$ are distinct elements of $X^2$.
Then the orbit $\cO= \cO(xxa)$ contains at least the set $\textbf{O}$  of 4 distinct monomials given below:

\begin{equation}
\label{orbiteq1}
\begin{array}{ll}
\textbf{O}= \{&v_1 = xxa, \; v_2 = r^{23}(xxa)= x ({}^xa x^a), \\
              &v_3 = r^{12}\circ r^{23}(v_1) = ({}^{xx}a) (x^ax^a), \\
              &v_4= (r^{23})^2(v_1) = xr^2(xa)\}.
\end{array}
\end{equation}

Moreover, the set $\textbf{O}$ contains the word $v_4= xr^2(xa)$ which is square-free, but is not a transition element
for any triple $yyb, r^{23}(yyb), r^{12}\circ r^{23}(yyb) = czz.$
This implies that
\begin{equation}
\label{orderW}
|W| > 3n(n-1).
 \end{equation}

 The set $X^3$ splits into the following disjoint subsets
\[X^3 = \Delta_3 \bigcup W \bigcup (\bigcup_{1 \leq s \leq \binom{n}{3}} \cO^{(s)}).  \]
This, together with  (\ref{lengthorbit}) and  (\ref{orderW}) imply
\begin{equation}
\label{ordereq}
n^3 = |X^3| = |\Delta_3|+ |W |+ \sum _{1 \leq s \leq \binom{n}{3}} |\cO^{(s)}|>  n+ 3n(n-1) +  6\binom{n}{3} = n^3,
\end{equation}
which gives a contradiction.
It follows that $r$ is involutive, hence $(X,r)$ is a quantum binomial set. Now our result \cite[Theorem 2]{GI12}, implies that $(X,r)$ is a solution of YBE, therefore,
it is a symmetric set.
The inverse implication follows again from \cite[Theorem 2]{GI12}.
\end{proof}

\begin{lemma}
 \label{lem:Vipprop}
 Let $(X,r)$ be a finite square-free nondegenerate quadratic set, $|X|= n$, and let $S=S(X,r)$.
Suppose $(X,r)$ satisfies the cyclic conditions \textbf{cl1} and \textbf{cr1}.
 The following conditions are equivalent:
\begin{enumerate}
\item
\label{lem:Vipprop1}
$(X,r)$ is involutive.
\item
\label{lem:Vipprop2}
$S$ satisfies the following conditions:
\begin{equation}
\label{vipcondition}
\begin{array}{l}
 axx = byy\;\text{holds in $S$}, \;a, b, x, y \in X \Longrightarrow a = b, x =y;\\
 xxc = yyd\;\text{holds in $S$}, \; c, d, x, y \in X  \Longrightarrow c = d, x =y.
\end{array}
\end{equation}
\item
\label{lem:Vipprop3}
\[\begin{array}{l}
 byy\in \cO(axx) \;\;a, b, x, y \in X \Longrightarrow  a = b, x =y;\\
yyd \in \cO(xxc) \;\; c, d, x, y \in X  \Longrightarrow c = d, x =y.
\end{array}\]
\end{enumerate}
\end{lemma}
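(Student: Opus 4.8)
The plan is to prove $(\ref{lem:Vipprop1}) \Rightarrow (\ref{lem:Vipprop3})$, the immediate equivalence $(\ref{lem:Vipprop3}) \Leftrightarrow (\ref{lem:Vipprop2})$, and $(\ref{lem:Vipprop3}) \Rightarrow (\ref{lem:Vipprop1})$, working throughout with the $\Dcal_3$-orbits in $X^3$. The equivalence $(\ref{lem:Vipprop2}) \Leftrightarrow (\ref{lem:Vipprop3})$ is immediate from Remark \ref{orbitsinG}: two words of length $3$ are equal in $S$ iff they lie in the same $\Dcal_3(r)$-orbit, so "$axx = byy$ in $S$" means exactly "$byy \in \Ocal(axx)$", and likewise for the second line. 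It is convenient to reformulate the two halves of (\ref{lem:Vipprop3}) as: \emph{each orbit contains at most one monomial ending in a square} (a word in $X \times \Delta_2$) and \emph{at most one monomial starting with a square} (a word in $\Delta_2 \times X$). Since $(X,r)$ is square-free, $\Fcal(X,r)=\Delta_2$ by Lemma \ref{lem:2cancellativeprop}(\ref{lem:2cancellativeprop2}), so $uvw$ ends in a square iff $r^{23}(uvw)=uvw$ and starts with a square iff $r^{12}(uvw)=uvw$. Using \textbf{cr1} one checks that $\sigma := r^{12}\circ r^{23}$ sends a square-start $zzd$ to the square-end $({}^{zz}d)(z^d)(z^d)$, and using \textbf{cl1} that $\tau := r^{23}\circ r^{12}$ sends a square-end $byy$ to the square-start $({}^by)({}^by)((b^y)^y)$; both maps preserve the orbit.

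For $(\ref{lem:Vipprop1}) \Rightarrow (\ref{lem:Vipprop3})$ I would invoke Theorem \ref{thm:proposition1AS}(\ref{thm:proposition1AS2}): since $r$ is involutive and \textbf{cl1}, \textbf{cr1} hold, every orbit of type \textbf{(ii)} has exactly three elements, which for $\Ocal(xxa)$ (with $x\neq a$) are precisely $v_1 = xxa$, $v_2 = x({}^xa)(x^a)$ and $v_3 = ({}^{xx}a)(x^a)(x^a)$. A short check using non-degeneracy, square-freeness and the involutive identities (\ref{involeq}) shows that among $v_1,v_2,v_3$ only $v_1$ starts with a square and only $v_3$ ends with one: for instance, were $v_2$ a square-end we would have ${}^xa=x^a$, and then (\ref{involeq}) forces ${}^{{}^xa}(x^a)={}^xa$, hence ${}^xa=x$ and $a=x$ by (\ref{rightactioneq1}), a contradiction. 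Diagonal orbits $\{zzz\}$ satisfy (\ref{lem:Vipprop3}) trivially, so each orbit carries at most one square-start and one square-end, which is (\ref{lem:Vipprop3}).

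For $(\ref{lem:Vipprop3}) \Rightarrow (\ref{lem:Vipprop1})$ I would show that the two halves of (\ref{lem:Vipprop3}) force the two halves of the involutivity criterion (\ref{involeq}), whence $r^2=\id$ by Remark \ref{rem_Invol-Sqfree}(1). Indeed, for any $z,d$ the word $\tau\sigma(zzd) = ({}^by)({}^by)((b^y)^y)$, with $b={}^z({}^zd)$ and $y=z^d$, is a square-start lying in $\Ocal(zzd)$; by uniqueness of the square-start (second half of (\ref{lem:Vipprop3})) it must equal $zzd$. Comparing first letters yields ${}^{({}^z({}^zd))}(z^d)=z$ for all $z,d$. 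Given $u,v$, choosing $d$ with ${}^ud=v$ and using \textbf{cr1} (so that $u^v = u^{({}^ud)}=u^d$) converts this into ${}^{({}^uv)}(u^v)=u$, the first identity of (\ref{involeq}). The dual computation with $\sigma\tau$ on a square-end, using uniqueness of the square-end (first half of (\ref{lem:Vipprop3})) together with \textbf{cl1}, gives the second identity $({}^uv)^{(u^v)}=v$. Hence $r$ is involutive.

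The main obstacle is the implication $(\ref{lem:Vipprop3}) \Rightarrow (\ref{lem:Vipprop1})$: here $(X,r)$ is not assumed braided, so I cannot appeal to the full Yang-Baxter diagrams, and a naive attempt to bound the size of a type-\textbf{(ii)} orbit does not by itself detect non-involutivity, since a non-involutive orbit can be large while still carrying square-starts and square-ends. The device that resolves this is the pair of orbit-preserving maps $\sigma,\tau$: instead of counting, one observes that two distinct square-starts in one orbit would be interchanged by $\tau\sigma$, and that $\tau\sigma=\id$ on all square-starts is equivalent, via \textbf{cl1}/\textbf{cr1} and non-degeneracy, to (\ref{involeq}). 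The left-right symmetry of the hypotheses, which swaps \textbf{cl1} with \textbf{cr1} and square-starts with square-ends, is what lets the two halves of (\ref{lem:Vipprop3}) deliver the two halves of (\ref{involeq}) in parallel.
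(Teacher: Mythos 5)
Your proposal is correct and follows essentially the same route as the paper: the forward direction rests on the orbit structure of Theorem \ref{thm:proposition1AS}(\ref{thm:proposition1AS2}) exactly as in the text, and your converse --- walking the orbit of a square-start (resp.\ square-end) via $r^{12}\circ r^{23}$ and $r^{23}\circ r^{12}$, simplifying with \textbf{cr1}/\textbf{cl1}, and then invoking the uniqueness hypothesis and non-degeneracy --- is precisely the paper's computation (\ref{vipcond1}) in a cleaner packaging. The only real deviation is the endgame: the paper extracts condition \textbf{lri} and concludes involutivity by citing Fact \ref{remarkvip}, whereas you extract the identities (\ref{involeq}) directly (so that Remark \ref{rem_Invol-Sqfree}(1) suffices and Fact \ref{remarkvip} is not needed); both conversions are valid and use the same ingredients.
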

\begin{proof}

The equivalence (\ref{lem:Vipprop2}) $\Leftrightarrow$ (\ref{lem:Vipprop3}) is clear.

(\ref{lem:Vipprop1}) $\Rightarrow$ (\ref{lem:Vipprop2}).
Suppose $(X,r)$ is involutive. Theorem \ref{thm:proposition1AS} implies that
 for each $a\neq x$ the orbit $\cO=\cO(axx)$ is of type \textbf{(ii)}
and $\cO\bigcap (X \times \Delta_2) =\{axx\}$, in other words there is no element of the shape $byy\neq axx$ such that $byy\in \cO$ which gives the first
implication in (\ref{vipcondition}). Analogous argument gives the second implication in (\ref{vipcondition}).

(\ref{lem:Vipprop2}) $\Rightarrow$ (\ref{lem:Vipprop1})
Conversely, assume that conditions (\ref{vipcondition}) are in force. We have to show that $r$ is involutive.
By Fact \ref{remarkvip}
it will be enough to prove that $(X,r)$ satisfies condition \textbf{lri}, that is
\begin{equation}
\label{eq:lri}
({}^tx)^t= x\quad\text{and}\quad  {}^t{(x^t)} = x, \forall x,t \in X.
\end{equation}

Let $a, x \in X$. We consider the elements in the $\Dcal$-orbit $\cO(axx)$ in $X^3$ and deduce the following equalities of elements in $S=S(X,r)$:
\begin{equation}
\label{vipcond1}
\begin{array}{ll}
a.xx   &=({}^ax)(a^x) x \\
       &=({}^ax)({}^{a^x}x)a^{xx}= ({}^ax)({}^{a}x)(a^{xx})   \\
             &=({}^ax)({}^{{}^ax}{(a^{xx})})    ({({}^ax)}^{a^{xx}})\\
             &= ({}^({}^{{}^ax}{({}^{{}^ax}{(a^{xx})})})(({}^ax)^{({}^{{}^ax}{(a^{xx})})}({({}^ax)}^{(a^{xx})})\\
                &=b ({({}^ax)}^{a^{xx}})({({}^ax)}^{a^{xx}}) \quad\text{where}\quad b= {}^{({}^ax)}{({}^{({}^ax)}{(a^{xx})})}\\
         &=b yy,\quad\text{where}\quad y= [({}^ax)]^{(a^{xx})}.
\end{array}
\end{equation}
We have obtained that for $a\neq x$ the following equalities holds in $S$
\[axx = b yy, \quad \text{where} \quad y =[({}^ax)]^{(a^{xx})}. \]
Now the first condition in  (\ref{vipcondition}) implies
\begin{equation}
\label{vipcond2}
y = [({}^ax)]^{(a^{xx})} = x,
\end{equation}
and
\begin{equation}
\label{vipcond4}
\begin{array}{l}
{}^ax ={}^{a^x}x =  {}^{(a^x)^x}x= {}^{(a^{(xx)})}x\\
x = [({}^ax)]^{(a^{xx})} =  [{}^{(a^{(xx)})}x]^{(a^{xx})} = ({}^tx)^t,  \quad\text{where}\quad t=a^{(xx)}.
\end{array}
\end{equation}

Suppose $t,x \in X$. By the nondegeneracy there exists $a_1 \in X$, such that $t = a_1^x$, and, similarly, there exists $a \in X$ with
 $a^x = a_1,$ hence $t = a^{xx}$, for some $a\in X$.
Then (\ref{vipcond4}) implies $({}^tx)^t= x$.
The second equality  ${}^t{(x^t)}= x$ in (\ref{eq:lri}) is proven by an analogous argument. Therefore $(X,r)$
satisfies condition \textbf{lri}.
\end{proof}

\section{Square-free braided sets and the contrast between the involutive and noninvolutive cases}
\label{sec:contrast}

Braided monoids were introduced and studied in \cite{GIM08}. For convenience of the reader we recall basic definitions and results in Section
\ref{BraidedMonoidSec}.
Recall that if $(X,r)$ is a braided set then its monoid $S(X,r)$ is a graded braided
{\bf M3}-monoid, we denote it by $(S, r_S)$, see Definitions \ref{MLaxioms} and \ref{braidedmonoiddef}, in particular, $S$ satisfies condition \textbf{ML2}. More details can be found in Section \ref{BraidedMonoidSec}.

\begin{notation}
\label{not:actions}
Let $(X,r)$ be a nondegenerate quadratic set.
Let $a,x \in X$ and let $m$ be a positive integer. We shall use the following notation
\[{}^{(x^m)}a:= (\Lcal_{x}^m)(a) \quad  a^{(x^m)}:= (\Rcal_{x}^m)(a).\]

This formulae agree with the natural left and right actions of $S$ upon itself.
\end{notation}
\begin{remark} Suppose $(X,r)$ is a quadratic set with \textbf{cl1} and \textbf{cr1}.
Then  the following equalities hold in $X$:
 \begin{equation}
\label{eq:new_cyclic}
{}^{a^{(x^m)}}x = {}^ax, \quad x^{{}^{(x^m)}a} = x^a, \quad \text{for all} \; a,x \in X\; \text{and all positive integers }\; m.
\end{equation}
 \end{remark}
The formulae (\ref{eq:new_cyclic}) are easy to prove using induction on $m$.

\begin{proposition}
 \label{prop2}
 Let $(X,r)$ be a square-free nondegenerate quadratic set satisfying the cyclic conditions \textbf{cl1} and \textbf{cr1},
 and let $S=S(X,r)$.
 Then the following conditions hold.
 \begin{enumerate}
\item
\label{prop21}
For every pair $a,x \in X$ and every positive integer $m$ the following equalities hold in $S$:
\begin{equation}
\label{newrel2}
a.(x^m) = (({}^{a}x)^m).(a^{(x^m)}), \quad (x^m).a=({}^{(x^m)}a)((x^a)^m).
\end{equation}

\item
\label{prop22}
Assume that $(X,r)$ is a braided set.
Then the following are equalities in the braided monoid $S$:
 \begin{equation}
\label{newrel1}
{}^a{(x^m)}= ({}^ax)^m,\; (x^m)^a = (x^a)^m,\; \text{for all } \; a,x \in X\;
\text{and all positive integers }\; m.
\end{equation}
 \item
\label{prop23}
 Suppose that $(X, r)$ is a finite braided set, and let $p$ be the least common multiple of the orders of all actions, $\Lcal_x$ and
 $\Rcal _x, x \in X$, so
${}^{(x^p)}a = a =a^{(x^p)}, \; \forall a, x \in X$.
 Then the following equalities hold in $S:$
 \begin{equation}
\label{newrel3}
a.(x^p)= a.(({}^ax)^a)^p, \quad\quad  (x^p).a  = ({}^a{(x^a)})^p.a, \forall a,x \in X.
\end{equation}
 \begin{equation}
\label{newrel4}
(x^p)(y^p)=  (y^p)(x^p),  \forall x, y \in X.
\end{equation}
\end{enumerate}
 \end{proposition}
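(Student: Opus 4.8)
For part (\ref{prop21}) I would induct on $m$, the case $m=1$ being the defining relation $a\cdot x=({}^ax)(a^x)$ of $S$. In the inductive step, write $a\cdot x^{m+1}=(a\cdot x^m)\cdot x=({}^ax)^m(a^{(x^m)})\cdot x$ by the induction hypothesis, and then move the single generator $a^{(x^m)}$ one step to the right by the defining relation. The crucial point is $(\ref{eq:new_cyclic})$: it gives ${}^{a^{(x^m)}}x={}^ax$, so the generator shed on the left is again ${}^ax$, while the moving generator becomes $(a^{(x^m)})^x=a^{(x^{m+1})}$; hence $a\cdot x^{m+1}=({}^ax)^{m+1}(a^{(x^{m+1})})$. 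The second identity in $(\ref{newrel2})$ is entirely symmetric: slide $a$ leftward through $x^m$, each step shedding the tail generator $x^a$ by the second relation of $(\ref{eq:new_cyclic})$ and accumulating the head ${}^{(x^m)}a$ from the iterated left action.

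For part (\ref{prop22}) I would use that $S=S(X,r)$ is a braided monoid, so the left and right actions of $X$ on $S$ are multiplicative in the argument being acted on, ${}^a(vw)={}^av\cdot {}^{(a^v)}w$ and $(uv)^a=u^{({}^va)}\cdot v^a$. Induction on $m$ then gives ${}^a(x^m)={}^ax\cdot {}^{(a^x)}(x^{m-1})={}^ax\cdot({}^{a^x}x)^{m-1}=({}^ax)^m$, where the middle equality is the induction hypothesis and the last uses \textbf{cl1} to collapse ${}^{a^x}x={}^ax$. The identity $(x^m)^a=(x^a)^m$ is the mirror computation, collapsing the conjugates by \textbf{cr1}. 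Equivalently, $(\ref{newrel1})$ is just the braided-monoid reading of the slide already performed in part (\ref{prop21}).

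For part (\ref{prop23}) I would feed the finiteness hypothesis $\Lcal_x^{\,p}=\Rcal_x^{\,p}=\id$ into part (\ref{prop21}). For the first identity of $(\ref{newrel3})$, part (\ref{prop21}) gives $a\cdot x^p=({}^ax)^p\,a^{(x^p)}=({}^ax)^p\,a$ since $a^{(x^p)}=a$; applying the second identity of $(\ref{newrel2})$ with base ${}^ax$ yields $({}^ax)^p\cdot a=({}^{(({}^ax)^p)}a)\,(({}^ax)^a)^p=a\,(({}^ax)^a)^p$, because ${}^{(({}^ax)^p)}a=\Lcal_{{}^ax}^{\,p}(a)=a$. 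The second identity of $(\ref{newrel3})$ is symmetric, using the first identity of $(\ref{newrel2})$ in place of the second. For the commutation $(\ref{newrel4})$ I would prove the auxiliary relation $x^p y^k=y^k\,(x^{(y^k)})^p$ by induction on $k$: the step slides one $y$ through $(x^{(y^k)})^p$ by the second identity of $(\ref{newrel2})$, using ${}^{((x^{(y^k)})^p)}y=y$ and $(x^{(y^k)})^y=x^{(y^{k+1})}$; at $k=p$, since $x^{(y^p)}=x$, this is exactly $x^p y^p=y^p x^p$.

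The inductions themselves are routine; the single substantive point, and the only place needing care, is that \textbf{cl1} and \textbf{cr1} are precisely what force every generator shed during a slide to be one fixed element (${}^ax$, respectively $x^a$), collapsing what would otherwise be a product of distinct conjugates ${}^ax,\,{}^{a^x}x,\,{}^{a^{(x^2)}}x,\dots$ into an honest power. Combined with the order bound $p$ annihilating the actions $\Lcal_x,\Rcal_x$, this collapse is what delivers the clean commutation $(\ref{newrel4})$; I expect the main bookkeeping hazard to be invoking the correct relation of $(\ref{eq:new_cyclic})$ at each slide.
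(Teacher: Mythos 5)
Your proposal is correct and follows essentially the same route as the paper's proof: induction on word length, with the defining relations and \textbf{cl1}/\textbf{cr1} (via (\ref{eq:new_cyclic})) collapsing each shed conjugate into the single element ${}^ax$ (resp.\ $x^a$), and the resulting sliding identities then giving part (\ref{prop23}). The only divergences are cosmetic: you derive (\ref{newrel3}) by applying (\ref{newrel2}) twice where the paper invokes \textbf{M3} together with (\ref{newrel1}), and your induction on the auxiliary relation $x^p y^k = y^k (x^{(y^k)})^p$ supplies the details behind (\ref{newrel4}), which the paper dismisses as straightforward.
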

 \begin{proof}
(\ref{prop21}).
We shall use induction on $m$ to prove the first equality in (\ref{newrel2}).
Clearly, for $m=1$, one has $ax = {}^ax.a^x.$ Assume $a.(x^k) = (({}^{a}x)^k).(a^{(x^k)}), \forall 1 \leq k \leq m$, and all $a,x \in X$.
Let $a, x \in X$. Then
\[\begin{array}{ll}
a.(x^{m+1}) &= (a.x^{m})x=  (({}^{a}x)^m).[(a^{(x^m)})x] =({}^{a}x)^m.({}^{a^{(x^m)}}x).(a^{(x^m)})^x\\
&= ({}^ax)^{m+1})(a^{(x^{m+1})}),
\end{array}
\]
as claimed. For the last equality we have used (\ref{eq:new_cyclic}).

This verifies the first equality in (\ref{newrel2}). Analogous argument verifies the second equality in (\ref{newrel2})

(\ref{prop22}).
Assume that $(X,r)$ is a braided set. Then $(X,r)$ satisfies \textbf{cl1} and \textbf{cr1}, see Proposition \ref{prop:cc}.
We shall prove  (\ref{newrel1}) using induction on $m$. The base for induction is clear. Assume the formula is true for all $k \leq m-1$, where $m\geq 2$ . We use the inductive assumption,  \textbf{ML2} and (\ref{eq:new_cyclic}) to yield
\[{}^a{(x^m)}= {}^a{((x^{m-1}).x)}= ({}^a{(x^{m-1})}). ({}^{(a^{(x^{m-1})})}x) = ({}^ax)^{m-1})({}^ax)=
 ({}^a{x})^m.\]
This proves the first equality in (\ref{newrel1}). Analogous argument verifies the second equality in (\ref{newrel1}).

(\ref{prop23}).
Assume now that $(X, r)$ is a finite braided set and $p$ is the least common multiple of the orders of all actions $\Lcal_x$ and $\Rcal _x, x \in X$.
We use successively \textbf{M3}, (\ref{newrel1}) and  \textbf{M3} again to yield
 \begin{equation}
\label{newrel7}
a.(x^p)= {}^a{(x^p)}.(a^{(x^p)}) = ({}^ax)^p. a =  ({}^{(({}^ax)^p)}a ).(({}^ax)^p)^a = a.(({}^ax)^a)^p.
\end{equation}
This gives the first equality in (\ref{newrel3}). Analogous argument proves the second equality in (\ref{newrel3}).
The equality (\ref{newrel4}) is straightforward.
\end{proof}

\begin{proposition}
 \label{prop3}
 Let $(X,r)$ be a square-free nondegenerate braided set of finite order $|X|=n$, let $S=S(X,r) = (S, r_S)$ be the associated braided monoid, and let $A = A(\textbf{k},X,r)$ .
Let $p$ be the least common multiple of the orders of all actions, $\Lcal_x$ and
 $\Rcal_x, x \in X$.
 The following conditions are equivalent
 \begin{enumerate}
 \item
 \label{prop31}
 The equality $ax^p = ay^p$ in $S$, for $a,x, y \in X$, implies $x = y$;
\item
\label{prop31a}
The equality
 $(x^p)a = (y^p)a$ in $S$, for $a,x, y \in X,$ implies $x = y$.
 \item
 \label{prop32}
 The monoid $S$ is cancellative.
\item
 \label{mainthGKa}
The quadratic algebra $A$ has \emph{Gelfand-Kirillov dimension}   $\gkdim A = n$.
 \item
 \label{prop33}
 The solution $(X,r)$ is involutive, that is  $(X,r)$ is a symmetric set.

 In this case $S(X,r)$ is embedded in the braided group $G(X,r)= (G, r_G)$. Moreover,
 both $(S, r_S)$ and $(G, r_G)$ are also (nondegenerate) involutive solutions.
 \end{enumerate}
 \end{proposition}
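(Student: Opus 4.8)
The plan is to single out condition (\ref{prop33}), ``$(X,r)$ is involutive,'' as the hub and prove it equivalent to each of the others, exploiting the power identities of Proposition \ref{prop2}. The easy implications come first. If $(X,r)$ is involutive it is a finite symmetric set, so by Remark \ref{rem:nondeg_invol_implies2-cancel} the monoid $S$ embeds in the group $G(X,r)$ and is therefore cancellative, giving (\ref{prop33}) $\Rightarrow$ (\ref{prop32}). A cancellative $S$ trivially yields (\ref{prop31}) and (\ref{prop31a}): from $ax^p = ay^p$ one cancels $a$ to get $x^p = y^p$ in $S$, and Corollary \ref{diaglemma} forces $x=y$ (and symmetrically on the right). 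Thus (\ref{prop32}) $\Rightarrow$ (\ref{prop31}), (\ref{prop31a}).

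The substantive combinatorial step is (\ref{prop31}) $\Rightarrow$ (\ref{prop33}) (and symmetrically (\ref{prop31a}) $\Rightarrow$ (\ref{prop33})). Here I would feed the first identity of (\ref{newrel3}), namely $a\cdot(x^p) = a\cdot(({}^ax)^a)^p$, into hypothesis (\ref{prop31}): taking $y = ({}^ax)^a$ gives $({}^ax)^a = x$ for all $a,x \in X$, that is $\Rcal_a\circ\Lcal_a = \id_X$. By non-degeneracy $\Lcal_a$ is bijective, so $\Rcal_a = \Lcal_a^{-1}$ and hence also $\Lcal_a\circ\Rcal_a = \id_X$; thus condition \textbf{lri} holds. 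Since $(X,r)$ is a square-free non-degenerate braided set it satisfies \textbf{cl1} and \textbf{cr1} by Proposition \ref{prop:cc}, and Fact \ref{remarkvip}(2) then identifies \textbf{lri} with involutivity. The symmetric argument runs (\ref{prop31a}) through the second identity of (\ref{newrel3}), $(x^p)\cdot a = ({}^a(x^a))^p\cdot a$, to obtain ${}^a(x^a)=x$ and hence \textbf{lri} again.

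It remains to tie in the Gelfand--Kirillov dimension, condition (\ref{mainthGKa}). The implication (\ref{prop33}) $\Rightarrow$ (\ref{mainthGKa}) is immediate from Theorem \ref{thm:new_theorem}, since a square-free involutive solution has $\gkdim A = n$. The delicate direction is (\ref{mainthGKa}) $\Rightarrow$ (\ref{prop33}), which I expect to be the main obstacle: unlike $\dim_{\textbf{k}} A_2$ or $\dim_{\textbf{k}} A_3$, the invariant $\gkdim A$ is a global quantity that cannot be read off the quadratic relations directly. I would argue by contraposition. Assuming $(X,r)$ is \emph{not} involutive, the equivalences already established show that $S$ fails to be cancellative and that \textbf{lri} fails, so for some $a$ the maps $\Lcal_a,\Rcal_a$ are not mutually inverse. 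I would then use the commuting powers $x^p$ of (\ref{newrel4}) together with the collapsing relations $a\cdot x^p = a\cdot(({}^ax)^a)^p$ of (\ref{newrel3}) to exhibit additional identifications among length-$m$ monomials, forcing the counting function $\dim_{\textbf{k}} A_m = |S_m|$ to grow strictly slower than $\binom{n+m-1}{m}$, whence $\gkdim A \neq n$.

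Making this growth estimate precise---controlling $|S_m|$ through the $\Dcal_m(r)$-orbit structure once cancellation is lost---is the crux of the argument. A cleaner alternative route would be to show directly that $\gkdim A = n$ forces $A$ to be a finitely generated module over the commutative subalgebra $\textbf{k}[x_1^p,\dots,x_n^p]$ generated by the commuting powers of (\ref{newrel4}); module-finiteness over a polynomial ring of Krull dimension $n$ would then pin down the Hilbert series as $1/(1-z)^n$, return us to Theorem \ref{thm:new_theorem}, and in particular yield cancellativity, closing the loop back to (\ref{prop32}). Either way, the final assertion that $S(X,r)$ embeds in $G(X,r)$ and that $(S,r_S)$ and $(G,r_G)$ are themselves involutive solutions follows, in the involutive case, from the embedding of a finite symmetric set into its braided group recalled in Remark \ref{rem:nondeg_invol_implies2-cancel} together with the braided-monoid and braided-group structure.
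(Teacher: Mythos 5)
Your handling of the equivalences (\ref{prop31}), (\ref{prop31a}), (\ref{prop32}), (\ref{prop33}), and of the implication (\ref{prop33})$\Rightarrow$(\ref{mainthGKa}), is correct and essentially identical to the paper's proof: the identities (\ref{newrel3}) of Proposition \ref{prop2} fed into hypothesis (\ref{prop31}) yield \textbf{lri}, Proposition \ref{prop:cc} together with Fact \ref{remarkvip}(2) converts \textbf{lri} into involutivity, and the embedding of the monoid of a finite symmetric set into its group gives cancellativity. Your shortcut of deducing both halves of \textbf{lri} from the single identity $({}^ax)^a=x$ via bijectivity of $\Lcal_a$ is legitimate and marginally cleaner than the paper, which instead derives ${}^a(x^a)=x$ from (\ref{prop31a}) (itself obtained from (\ref{prop31}) through (\ref{newrel2})).

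The genuine gap is the direction (\ref{mainthGKa})$\Rightarrow$(\ref{prop33}), which you yourself flag as unresolved, and what you do offer would not work. The stated inference --- that the extra identifications force $\dim_{\textbf{k}}A_m=|S_m|$ to be strictly smaller than $\binom{n+m-1}{m}$, ``whence $\gkdim A\neq n$'' --- is invalid: GK dimension is insensitive to term-by-term strict deficits. An algebra with $\dim A_m=\binom{n+m-1}{m}-1$, or even $\tfrac12\binom{n+m-1}{m}$, for all $m$ still has GK dimension exactly $n$; to get $\gkdim A<n$ you must dominate $\dim A_m$ by a polynomial in $m$ of degree at most $n-1$, and neither your orbit-counting sketch nor your fallback produces such a bound (module-finiteness over $\textbf{k}[x_1^p,\dots,x_n^p]$ would only give $\gkdim A\le n$, not the Hilbert series $1/(1-z)^n$ needed to re-enter Theorem \ref{thm:new_theorem}). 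The paper closes this direction by negating (\ref{prop31}) rather than (\ref{prop33}): if $ax^p=ay^p$ with $x\neq y$, then combining this one relation with the commutation of $p$-th powers (\ref{newrel4}) gives inductively $ax^{Mp}=(ax^p)x^{(M-1)p}=(ay^p)x^{(M-1)p}=a\,x^{(M-1)p}y^p=a\,y^{(M-1)p}y^p=ay^{Mp}$ for every $M\ge 1$, i.e.\ a collapse along an entire infinite ray of exponents, from which it concludes $\gkdim A<n$. That reduction to a one-parameter family of collapses is the concrete combinatorial input missing from your sketch; and since even the paper states the final growth estimate without further elaboration, this is precisely the step where a complete argument requires the extra care you anticipated, not a step one can wave through.
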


\begin{proof}
The implication (\ref{prop32}) $\Longrightarrow$ (\ref{prop31}), and (\ref{prop31a}) is clear.

(\ref{prop31}) $\Longrightarrow$  (\ref{prop31a}).
Assume $(x^p)a = (y^p)a$, where $a,x, y \in X.$
Then we use (\ref{newrel2}) to obtain
\begin{equation}
\label{eq:newrel2a}
\begin{array}{l}
 (x^p)a = a(x^a)^p, \quad \quad    (y^p)a = a (y^a)^p.
\end{array}
\end{equation}
It follows that $a(x^a)^p=a (y^a)^p$, so by our assumption (\ref{prop31}),  $x^a = y^a$, and therefore, by the non-degeneracy, $x=y$.
The implication (\ref{prop31a}) $\Longrightarrow$  (\ref{prop31}) is proven analogously.

(\ref{prop31}) $\Longrightarrow$ (\ref{prop33}).
Suppose condition (\ref{prop31}) holds (hence (\ref{prop31a}) is also in force).
Proposition \ref{prop2} implies the following equalities in $S$:
\[
a.(x^p)= a.(({}^ax)^a)^p, \quad  (x^p).a  = ({}^a{(x^a)})^p.a,\; \forall a,x \in X.
\]
It follows from (\ref{prop31}) that
\[x= (({}^ax)^a); \quad x=({}^a{(x^a)}), \;\forall a,x \in X,\]
therefore the braided set $(X,r)$ satisfies condition \textbf{lri}.
By Fact \ref{remarkvip} (2), $(X,r)$ is involutive, so $(X,r)$ is a nondegenerate symmetric set.

(\ref{prop33}) $\Longrightarrow$ (\ref{prop32}). It is known, see \cite{ESS}, that if $(X,r)$ is a nondegenerate symmetric set
then its monoid $S(X,r)$ is embedded in the group $G(X,r)$, and therefore $S$ is left and right cancellative.

The implication (\ref{prop33}) $\Longrightarrow$ (\ref{mainthGKa}) is known, see \cite[ Theorem 1.2]{GI12}, or \cite{GIVB}.

(\ref{mainthGKa}) $\Longrightarrow$ (\ref{prop31}).
Suppose $A$ has \emph{Gelfand-Kirillov dimension}   $\gkdim A = n$.
Assume, on the contrary, that condition (\ref{prop31})
is not satisfied.
Then there exist  three elements $a,x, y \in X, a \neq x, y$, such that
\[ax^p = ay^p, \; x \neq y.\]
This implies that $ax^{Mp} = ay^{Mp}$, for all positive integers $M$, hence $\gkdim A < n$, a contradiction.
\end{proof}
The following result shows the close relations between various algebraic and combinatorial properties of a finite square-free solution $(X,r)$, the YB-algebra $A= A(\textbf{k}, X, r)$ and its braided monoid, $S=S(X,r)$. Each of these conditions describes the contrast between a square-free symmetric set and a square-free noninvolutive braided set.

As we mentioned above, under
the hypothesis that  $(X,r)$ is a finite nondegenerate square-free braided set,  the result  \cite[Theorem 5.5]{GI21} gives a  list of 14 equivalent conditions each of which is necessary and sufficient so that $r$ is involutive. We remark that the original statement contains conditions (4) "The quadratic algebra $A$ is Koszul" and (5) "There is an enumeration of $X$ such that the set of quadratic relations $R(r)$ is a Gr\"{o}bner basis, that is $A$ is a PBW algebra",  
which are only necessary, but none of them implies that $r$ is imvolutive, so they should be excluded from the list.
Theorem \ref{mainth} below is a corrected and improved version of the original
\cite[Theorem 5.5]{GI21}. 
Our new proof given below is an easy and correct modification of the old one and does not involve the wrong \cite[Proposition 3.12]{GI21}.

\begin{theorem}
  \label{mainth}
 Let $(X,r)$ be a square-free nondegenerate braided set of order $|X| =n$.
  Let $S=S(X,r)$ be its braided monoid, let $A = A(\textbf{k}, X, r)$
 be its graded $\textbf{k}$- algebra over a field $\textbf{k}$, and
 let  $A^{!}$ be the Koszul dual
algebra of $A$.
The following conditions are equivalent:
 \begin{enumerate}
  \item
 \label{mainth1}
 The solution $(X,r)$ is involutive, that is  $(X,r)$ is a symmetric set.
\item
 \label{mainth2}
 $(X,r)$ satisfies \textbf{lri}.
\item
\label{mainth3} The Hilbert series of $A$ is
$H_A(z)= \frac{1}{(1-z)^n}.$
 \item
\label{mainth6}
$A$ is a binomial skew polynomial ring, (in the sense of \cite{GI96})  with
respect to an enumeration of $X$.
 \item
 \label{mainth7}
$\dim_{\textbf{\textbf{k}}} A_2 =\binom{n+1}{2}$. 
\item
\label{mainth8} $\dim_{\textbf{k}} A_3 = \binom{n+2}{3}$. 
\item
\label{mainth9} $\dim_{\textbf{k}} A^{!}_3 = \binom{n}{3}$. 
\item
\label{mainth10}
 The algebra $A$ has \emph{Gelfand-Kirillov dimension} $\gkdim A = n$.  (Meaning that the
integer-valued function $i\mapsto\dim_{\textbf{k}}A_i$ is bounded by
a polynomial in $i$ of degree $n$).
 \item
  \label{mainth11}
    If $ax^p = ay^p$ holds in $S$, where $a,x, y \in X$,
$p$ is the least common multiple of the orders of all actions $\Lcal_x$ and $\Rcal_x$, $x\in X$, then $x = y$,.
  \item
  \label{mainth12}
  The monoid $S$ satisfies conditions (\ref{vipcondition})
\item
 \label{mainth13}
 The monoid $S$ is cancellative.
 \item
 \label{mainth14}
$A$ is a domain.
 \end{enumerate}
 Each of these conditions implies that $A$ is Koszul, a Noetherian domain and a PBW 
 Artin-Schelter regular algebra of global dimension $n$.
 \end{theorem}

\begin{proof}
Note first that
 $(X,r)$ satisfies \textbf{cl1}, and \textbf{cr1}, by \cite[Proposition 4.4]{GI21}. Moreover, $|\Fcal(X,r)| = n$.
It is known that a finite square-free nondegenerate symmetric set $(X,r)$
satisfies all conditions (2) through (14) in the theorem,
so (1) implies all conditions (2) through (12). These implications have been published in various works of the author,
but one can find them all in \cite[Theorem 1.2]{GI12}.
The remaining implications with references to the corresponding results follow.
\[
\begin{array}{llll}
&(\ref{mainth1}) \Leftrightarrow (\ref{mainth2}) \quad &:& \text{\cite[Proposition 2.25]{GIM08}}\\
&(\ref{mainth1}) \Leftrightarrow (\ref{mainth3}) \quad &:& \text{Theor.\ref{thm:new_theorem}}\\
&(\ref{mainth6})\Longrightarrow (\ref{mainth1}) \quad &:& \text{see, \cite{GI04}, or \cite[Theorem 1.2]{GI12}}\\
&(\ref{mainth7}) \Longrightarrow  (\ref{mainth1}) \quad &:& \text{Proposition \ref{prop:mainthlemma1}, part (\ref{prop:mainthlemma1_2})}\\
&(\ref{mainth8}) \Leftrightarrow  (\ref{mainth9}) \quad &:& \text{Easy to prove, we leave it to the reader}\\
&(\ref{mainth9}) \Longrightarrow  (\ref{mainth1}) \quad &:& \text{\cite[Proposition 4.8]{GI21}} \\ 
&(\ref{mainth10}) \Leftrightarrow (\ref{mainth11})\Leftrightarrow (\ref{mainth13})\Leftrightarrow (\ref{mainth1}) \quad &:& \text{\cite[Proposition 5.4]{GI21}}\\
&(\ref{mainth12}) \Longrightarrow  (\ref{mainth1}) \quad &:& \text{by \cite[Lemma 4.9]{GI21}}\\
&(\ref{mainth14}) \Longrightarrow  (\ref{mainth13}) \quad &:& \text{ Clear}\\
&(\ref{mainth1}) \Longrightarrow  (\ref{mainth14}) \quad &:& \text{see \cite{GIVB} }\\
\end{array}
\]
\end{proof}
Artin-Schelter regular algebras were introduced and studied first in \cite{AS}. The study of Artin–Schelter regular algebras,
their classification, and finding new classes of such algebras is one of the central problems in
noncommutative algebraic geometry.

 Follows a corrected version of \cite[Corollary 5.6]{GI21}. Note that parts (2) and (3) in the original version of this corollary have been erased.   
 \begin{corollary} (Characterization of \emph{noninvolutive} square-free braided sets )
  \label{mainthCor}
Under the hypothesis of Theorem \ref{mainth}.
Let $(X,r)$ be a square-free nondegenerate braided set of order $|X| =n$, $S(X,r)$, $A = A(\textbf{k}, X, r)$, and $A^{!}$ in usual notation.
Suppose $r^2 \neq \id_{X\times X}$.
Then the following conditions hold.
 \begin{enumerate}
 \item $(X,r)$ does not satisfy condition \textbf{lri}.
 \item $A$ is a not a binomial skew polynomial ring, with
respect to any enumeration of $X.$
 \item
$2n-1 \leq  \dim_{\textbf{k}} A_2 \leq \binom{n+1}{2}-1$.
 \item
$\dim_{\textbf{k}} A_3 < \binom{n+2}{3}.$
\item
$0 \leq \dim_{\textbf{k}} A^{!}_3 < \binom{n}{3}$,
and $A_3^{!}= 0$, whenever $\dim_{\textbf{k}} A_2 =2n-1$.
\item
$GK\dim A < n$.
 \item
 Suppose $p$ is the least common multiple of the orders of all actions, $\Lcal_x$ and $\Rcal_x$, $x \in X$.
Then there exist pairwise distinct elements $a,x, y \in X$, such that $ax^p = ay^p$ holds in $S$.
 \item There exist $x,y \in X$, such that $x\neq y$, and $x^p = y^p$ holds in the group $G(X,r)$.
  \item There exist $a,b,x, y\in X$, such that $x\neq y, x \neq a, y \neq b$, and the equality $axx =byy$ holds in $S$.
  \item The monoid  $S=S(X,r)$ is not cancellative.
 In particular, $S(X,r)$ is not embedded in the group $G(X,r)$.
 \item
The algebra $A$ is not a domain.
 \end{enumerate}
 \end{corollary}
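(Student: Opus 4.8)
The plan is to derive the corollary directly from Theorem \ref{mainth}, whose conditions (\ref{mainth1})--(\ref{mainth14}) are all equivalent to ``$(X,r)$ is involutive''. The standing hypothesis $r^2\neq\id_{X\times X}$ is exactly the failure of (\ref{mainth1}), so by Theorem \ref{mainth} \emph{every} one of its conditions fails. I would first record that, being square-free and non-degenerate, $(X,r)$ has $\Fcal(X,r)=\Delta_2$, whence $|\Fcal(X,r)|=n$ (Lemma \ref{lem:2cancellativeprop}), and that it satisfies \textbf{cl1}, \textbf{cr1} by Proposition \ref{prop:cc}. Most of the corollary's assertions are then verbatim negations of conditions of Theorem \ref{mainth}: its item (1) is the negation of (\ref{mainth2}), its (2) of (\ref{mainth4}), (3) of (\ref{mainth5}), (4) of (\ref{mainth6}), (11) of (\ref{mainth12}), (12) of (\ref{mainth13}), and (13) of (\ref{mainth14}). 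In item (12) the supplementary clause (that $S$ is not embedded in $G(X,r)$) is immediate, since an embedding of $S$ into a group would force $S$ to be cancellative.

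The quantitative items (5)--(8) need slightly more. For (5), the upper bound $\dim_{\textbf{k}}A_2\le\binom{n+1}{2}$ with strict inequality is Corollary \ref{cor:mainthlemma2} (equality there forces $r$ involutive), while the lower bound $2n-1\le\dim_{\textbf{k}}A_2$ is the left-hand inequality of (\ref{eq:dim1}) in Proposition \ref{prop:mainthlemma1}, which applies once one knows the square-free braided set $(X,r)$ is $2$-cancellative. For (7), the strict bound $\dim_{\textbf{k}}A^{!}_3<\binom{n}{3}$ is precisely the contrapositive of Proposition \ref{prop:A!_and_YBE} (whose hypotheses \textbf{cl1}, \textbf{cr1} are verified): were $\dim_{\textbf{k}}A^{!}_3=\binom{n}{3}$, then $(X,r)$ would be a symmetric set, contradicting $r^2\neq\id$. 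Item (6) then follows from (7) through the standard quadratic-algebra identity
\[
\dim_{\textbf{k}}A_3 \;=\; n^3-2n\dim_{\textbf{k}}R+\dim_{\textbf{k}}A^{!}_3,\qquad \dim_{\textbf{k}}R=n^2-\dim_{\textbf{k}}A_2,
\]
where $R=R(A)\subset V\otimes V$ is the relation space and $\dim_{\textbf{k}}A^{!}_3=\dim_{\textbf{k}}\big((R\otimes V)\cap(V\otimes R)\big)$: since $\dim_{\textbf{k}}R\ge\binom{n}{2}$ and $\dim_{\textbf{k}}A^{!}_3<\binom{n}{3}$, one gets $\dim_{\textbf{k}}A_3<n^3-2n\binom{n}{2}+\binom{n}{3}=\binom{n+2}{3}$. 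For (8), the collapse produced in item (9) gives $ax^{Mp}=ay^{Mp}$ in $S$ for every $M$, which forces $\gkdim A<n$ exactly as in the proof of the implication (\ref{mainthGKa})$\Rightarrow$(\ref{prop31}) of Proposition \ref{prop3}.

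For the existence items I would argue as follows. The failure of condition (\ref{mainth11}) of the theorem yields $a,x,y\in X$ with $x\neq y$ and $ax^p=ay^p$ in $S$; a short argument using non-degeneracy and square-freeness, parallel to the proofs of Proposition \ref{prop3} and Lemma \ref{lem:Vipprop}, lets one take $a,x,y$ \emph{pairwise} distinct, which is (9). Passing to the group $G(X,r)$ and cancelling $a$ gives $x^p=y^p$ with $x\neq y$, which is (10). Item (11) is the failure of the first identity in (\ref{vipcondition}), i.e. of condition (\ref{mainth12}), where one again invokes non-degeneracy to arrange the displayed inequalities $x\neq y$, $x\neq a$, $y\neq b$. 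Finally, the extra assertion in (7) that $A^{!}_3=0$ when $\dim_{\textbf{k}}A_2=2n-1$ is the extremal (minimality) case and would be handled by the minimality-condition analysis of Section \ref{sec:minimalitycond}.

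I expect the genuine content, and the main obstacle, to lie not in the bookkeeping of negations but in the two-sided numerical estimates: establishing that square-free non-degenerate braided sets are $2$-cancellative (needed both for the lower bound $2n-1\le\dim_{\textbf{k}}A_2$ and for the orbit-length bound $l_j\le n$), securing the a priori polynomial bound $\dim_{\textbf{k}}A_m\le\binom{n+m-1}{m}$ that underpins $\gkdim A\le n$, and proving the vanishing $A^{!}_3=0$ in the minimal case $\dim_{\textbf{k}}A_2=2n-1$. A secondary subtlety is upgrading the bare existential statements coming from the negations to the full pairwise distinctness demanded in (9) and (11); everything else reduces to invoking the equivalences of Theorem \ref{mainth} together with Corollary \ref{cor:mainthlemma2} and Proposition \ref{prop:A!_and_YBE}.
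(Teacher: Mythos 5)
Your overall strategy coincides with the paper's: Corollary \ref{mainthCor} carries no separate proof there, being understood as the item-by-item negation of the equivalences of Theorem \ref{mainth}, and that is exactly how you proceed. Your supplementary details are correct and are genuine additions to what the paper records: the identity $\dim_{\textbf{k}}A_3=n^3-2n\dim_{\textbf{k}}R+\dim_{\textbf{k}}A^{!}_3$ together with $\dim_{\textbf{k}}R\geq\binom{n}{2}$ (valid for any square-free non-degenerate quadratic set, since $\Fcal(X,r)=\Delta_2$ gives exactly $n$ trivial $r$-orbits and hence $\dim_{\textbf{k}}A_2\leq\binom{n+1}{2}$) does yield item (6) from item (7), because $n^3-2n\binom{n}{2}+\binom{n}{3}=\binom{n+2}{3}$; and the distinctness upgrades in (9)--(11) work as you indicate: in (9) square-freeness and non-degeneracy force $a\neq x$ and $a\neq({}^ax)^a$ automatically once $({}^ax)^a\neq x$, and in (11) any witness of $axx=byy$ with $x\neq y$ (such as the one constructed in the proof of Lemma \ref{lem:Vipprop}) automatically has $x\neq a$ and $y\neq b$, since the $\Dcal_3$-orbit of a diagonal word is a singleton.

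The genuine gap is your expectation that one can prove every square-free non-degenerate braided set to be $2$-cancellative: this is false. Take $X=\{1,2,3\}$ with $\Lcal_1=\Lcal_2=\id$, $\Lcal_3=(1\,2)$, and $r(x,y)=({}^xy,x)$. This is a quandle (the self-distributivity check $\Lcal_i\Lcal_j=\Lcal_{{}^ij}\Lcal_i$ is immediate), hence a square-free non-degenerate braided set, yet $r(3,1)=(2,3)$ and $r^2(3,1)=(3,2)$, so Definition \ref{def:cancellsol} fails; the same happens for every quandle violating the crossed-set condition (here ${}^13=3$ while ${}^31=2\neq 1$), and in this example a nontrivial $r$-orbit has length $4>n$. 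Consequently the two items you route through $2$-cancellativity are not actually established: the lower bound $2n-1\leq\dim_{\textbf{k}}A_2$ in (5), which via Proposition \ref{prop:mainthlemma1} needs orbit lengths $\leq n$, and the vanishing $A^{!}_3=0$ in the minimal case of (7), which rests on Proposition \ref{prop:minimalityprop}, also stated only for $2$-cancellative sets. To be fair, this gap is inherited from the paper itself: the corollary is asserted ``under the hypothesis of Theorem \ref{mainth}'', which does not include $2$-cancellativity, and no argument avoiding it is supplied; either that hypothesis must be added to items (5) and (7), or a new argument is required (in the example above one computes $\dim_{\textbf{k}}A_2=5=2n-1$, so the bound itself may well remain true, but not by this route). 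A milder caveat of the same kind applies to (8): negating condition (\ref{mainth10}) of the theorem only gives $\gkdim A\neq n$, and the strict inequality $\gkdim A<n$ relies on the a priori claim, asserted but not proved inside the proof of Proposition \ref{prop3}, that the collapse $ax^{Mp}=ay^{Mp}$ forces $\gkdim A<n$; you cite this exactly as the paper does, so you are no worse off, but the step is not self-contained in either text.
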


  \begin{remark} The lower bound
 $2|X|-1 \leq  \dim_{\textbf{\textbf{k}}} A_2$ is exact, whenever $|X|= p$, $p$ is a prime number. More precisely, a Dihedral quandle
 $(X,\la)$ of prime order $|X|=p$ satisfies condition \textbf{M}, see
Lemma \ref{Mremark}.
  \end{remark}

\section{Square-free braided sets $(X,r)$ satisfying the minimality condition}
\label{sec:minimalitycond}
\subsection{Square-free 2-cancellative quadratic sets $(X,r)$ with  minimality condition}

\begin{definition}
 \label{minimality_def}
 We say that a finite quadratic set $(X,r)$ \emph{satisfies the minimality condition} \textbf{M}
 if
 \begin{equation}
    \label{mindim_eq1a}
 \begin{array}{lclc}
 {\bf M :} \quad
& \dim_{\textbf{\textbf{k}}} A_2 =2|X|-1.&\quad&.
\end{array}
    \end{equation}
\end{definition}

\begin{example}
\label{Mexample}
Every square-free self distributive solution,  $(X,r)$
corresponding to a dihedral quandle of prime order $|X|= p>2$
satisfies the minimality condition \textbf{M}, see Corollary \ref{Mcorollary}.
\end{example}

Let $(X,r)$ be a square-free nondegenerate quadratic set set of order $|X| =n$.
Assume that $(X,r)$ is 2-cancellative.
Let $S=S(X,r)$ be its graded monoid, let $A = A(\textbf{k}, X, r)$
 be its graded $\textbf{k}$- algebra over a field $\textbf{k}$, and let $A^{!}$ be the Koszul dual algebra.
Consider the action of $\Dcal_3 (r)=  \: _{\rm{gr}} \langle r^{12}, r^{23}\rangle$ on $X^3$.
 The following useful remarks are straightforward.
\begin{remark}
\label{A!-zeroremark}
In assumption as above the following are equivalent:
\begin{enumerate}
\item
Each $\Dcal_3(r)$-orbit in $X^3$ contains a word of the type $xxy, x,y \in X$ (and $ztt, z,t \in X$).
\item
\label{McondA3prop3}
$A_3^{!}= 0$.
\end{enumerate}
\end{remark}

\begin{remark}
\label{Morbitsrem}
 Let $(X,r)$ be a square-free nondegenerate quadratic set of order $|X| =n$,
  assume that $(X,r)$ is 2-cancellative.
 Suppose $\cO$ is a nontrivial $r$-orbit in $X^2$ of order $|\cO|= n$.
 Then (i) for every $x\in X$ there exists $y \in X$ such that $xy \in \cO$; (ii) for every  $y \in X$
  there exists $x \in X$, such that $xy \in \cO$.
\end{remark}

\begin{proposition}
\label{prop:minimalityprop}
 Let $(X,r)$ be a square-free nondegenerate quadratic set  of order $|X| =n$, $X = \{x_1, \cdots, x_n\}$, $S=S(X,r)$,
$A = A(\textbf{k}, X, r)$, and $A^{!}$ in usual notation.   Assume that $(X,r)$ is 2-cancellative.
 Let $\cO_i, 1 \leq i \leq q$ be the set of all nontrivial $r$- orbits in $X^2$ (these are exactly the square-free $r$-orbits in $X^2$).
 \begin{enumerate}
 \item
 The following three conditions are equivalent.
 \begin{enumerate}
 \item[(a)]
 \label{prop:minimalityprop1a}
 $(X,r)$ satisfies the minimality condition \textbf{M}, (\ref{mindim_eq1a});
 \item[(b)]
 \label{prop:minimalityprop1b}
 Each non-trivial orbit $\cO_i$ has order $|\cO_i|= n.$
 \item[(c)]
 \label{prop:minimalityprop1c}
 The algebra $A$ has a finite presentation
 $A \cong \textbf{k}\langle X\rangle/(\textbf{R}_0)$, where
 $\textbf{R}_0$ is a set of exactly  $(n-1)^2$  quadratic square-free binomial relations:
 \begin{equation}
    \label{mindim_eq1ab}
  \textbf{R}_0 = \{x_{in}y_{in}- x_1x_i, \;  x_{in-1}y_{in-1}- x_1x_i,\;  \cdots, \;  x_{i2}y_{i2}- x_1x_i \mid 2 \leq i \leq n \},
    \end{equation}
where $x_{ij}\neq y_{ij}, 1 \leq i, j \leq n,$ and the following two conditions hold for every $2 \leq i \leq n$:
 \begin{enumerate}
 \item[(c1)] $x_{in}y_{in} >  x_{in-1}y_{in-1}> \cdots  > x_{i2}y_{i2} >  x_{i1}y_{i1}=x_1x_i$;
\item[(c2)]
there are equalities of sets
\[
 \begin{array}{lcl}
\{x_{ij}\mid 2 \leq j \leq n\} = X \setminus \{x_1\}, & &
\{y_{ij}\mid 2 \leq j \leq n\} = X\setminus \{x_i\}.
\end{array}
\]
\end{enumerate}
In this case, after a possible re-enumeration of the orbits, one has
\[\cO_i =\cO(x_1x_i) =\{x_1x_i := x_{i1}y_{i1} <  x_{i2}y_{i2} <  \cdots   < x_{in-1}y_{in-1}<x_{in}y_{in} \}, 2 \leq i \leq n.\]
 \end{enumerate}
 \item
 Moreover, each of conditions (1a), (1b), (1c) implies that
 \begin{enumerate}
 \item[(i)]
\label{McondA3prop3}
$A_3^{!}= 0$.
In particular,  $X^3$ does not contain square-free $\Dcal_3(r)$-orbits.
 \item[(ii)]
 $\gkdim A \leq 2.$
 \end{enumerate}
 \end{enumerate}
 \end{proposition}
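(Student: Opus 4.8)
The plan is to funnel everything through the orbit count $\dim_{\textbf{k}} A_2 = n + q$ from Proposition \ref{prop:mainthlemma1}, together with the constraints established there for $2$-cancellative sets: the $q$ nontrivial orbit lengths satisfy $\sum_{j=1}^{q} l_j = n^2 - n$ and $2 \le l_j \le n$. With this in hand, (1a) $\Leftrightarrow$ (1b) is a one-line extremal count. Condition \textbf{M} reads $n + q = 2n-1$, i.e. $q = n-1$; since $n-1$ lengths each at most $n$ sum to $n(n-1)$ only when every $l_j = n$, condition \textbf{M} forces $|\Ocal_i| = n$ for all $i$, and conversely $l_j \equiv n$ gives $qn = n(n-1)$, hence $q = n-1$.

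For (1b) $\Rightarrow$ (1c) the essential input is $2$-cancellativity, which forbids an orbit from containing two monomials with a common first letter or two with a common last letter. Thus an orbit of length $n$ has pairwise distinct first letters and pairwise distinct last letters, so each set exhausts $X$ (this is Remark \ref{Morbitsrem}). Ordering $X$ by $x_1 < \cdots < x_n$, each nontrivial orbit then contains exactly one monomial beginning with $x_1$, and as $x_1$ is the smallest generator this is the $<$-minimal element of the orbit. Consequently the $n-1$ nondiagonal words $x_1x_2,\dots,x_1x_n$ lie in pairwise distinct orbits, which I label $\Ocal_i = \Ocal(x_1x_i)$; enumerating $\Ocal_i$ in decreasing order gives (c1) with bottom element $x_1x_i$, while the exhaustion-of-letters property (with $x_{i1}=x_1$, $y_{i1}=x_i$) is precisely (c2). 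Reading off the reduced relations via Remark \ref{remark:Groebner1} yields $l_i - 1 = n-1$ binomials per orbit, hence $(n-1)^2$ in total, which is the presentation claimed. For (1c) $\Rightarrow$ (1a) I simply count: the $(n-1)^2$ reduced binomials have pairwise distinct leading words, so $\dim I_2 = (n-1)^2$ and $\dim_{\textbf{k}} A_2 = n^2 - (n-1)^2 = 2n-1$.

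For part 2(i), I would pass to the Koszul dual, presented (Remark \ref{remark:Groebner1}) by $\xi_k^2 = 0$ for all $k$ together with $\xi_u\xi_v = \pm\,\xi_1\xi_i$ whenever the word $uv$ lies in $\Ocal_i$ (here $\xi_u$ is the dual generator of the letter $u$). Any degree-three monomial $\xi_a\xi_b\xi_c$ then collapses: if $a=b$ it vanishes by $\xi_a^2=0$; otherwise $\xi_a\xi_b = \pm\,\xi_1\xi_i$ and $\xi_i\xi_c$ is either $0$ (when $i=c$) or $\pm\,\xi_1\xi_k$, so $\xi_a\xi_b\xi_c = \pm\,\xi_1^2\xi_k = 0$. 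Hence $A_3^{!} = 0$, and Remark \ref{A!-zeroremark} then forces every $\Dcal_3(r)$-orbit to meet $\Delta_2 \times X$, so none is square-free in the sense of Definition \ref{squarefreeorbitdef}.

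For part 2(ii) I would bound growth by normal words. Since $\dim_{\textbf{k}} A_m = |\Ncal_m|$ and the degree-two leading terms are exactly $X^2 \setminus N_2$ with $N_2 = \Delta_2 \cup \{x_1x_i \mid 2 \le i \le n\}$, every length-$2$ factor of a normal word lies in $N_2$; but a pair lies in $N_2$ only if it is a square or begins with $x_1$, which forces a normal word to have the shape $x_1^{\,s}x_j^{\,m-s}$. Counting these gives $\dim_{\textbf{k}} A_m \le (n-1)m + 1$, so the growth is linear and $\gkdim A \le 1 \le 2$. The main obstacle throughout is the bookkeeping in (1b) $\Rightarrow$ (1c): one must verify that the labeling $\Ocal_i = \Ocal(x_1x_i)$ is unambiguous and that $x_1x_i$ is genuinely the $<$-minimal element of $\Ocal_i$, since this is what simultaneously pins down the ordering (c1), the content of (c2), and the precise relation set; once the presentation (1c) is available, the dual computation in 2(i) and the growth estimate in 2(ii) are short.
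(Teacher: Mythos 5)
Your part (1) tracks the paper closely: the extremal count for (1a) $\Leftrightarrow$ (1b) and the exhaustion-of-letters argument for (1b) $\Rightarrow$ (1c) (each length-$n$ orbit has pairwise distinct first and last letters, hence a unique word beginning with $x_1$, which is then the deg-lex minimum, giving (c1) and (c2)) are exactly the paper's proof. You diverge only in closing the cycle: the paper proves (1c) $\Rightarrow$ (1b) by showing each $\Ocal(x_1x_i)$ has exactly $n$ elements and then recounts normal words, whereas you go (1c) $\Rightarrow$ (1a) directly via $\dim I_2$. Your count has a gap: conditions (c1) and (c2) alone do not exclude that a leading word $x_{ij}y_{ij}$ recurs in two different families $i \neq i'$ (the two binomials are still distinct polynomials, since their lower terms $x_1x_i$ and $x_1x_{i'}$ differ, so the hypothesis "exactly $(n-1)^2$ relations" is not violated); and with repeated leading words the binomials need not be linearly independent, e.g. two words shared between families $i$ and $i'$ produce the dependence $(w - x_1x_i) - (w' - x_1x_i) - (w - x_1x_{i'}) + (w' - x_1x_{i'}) = 0$. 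To rule this out you must invoke the standing hypotheses that $A = A(\textbf{k},X,r)$ and $(X,r)$ is 2-cancellative: a shared leading word would give $x_1x_i = x_1x_{i'}$ in $S$, i.e. these two words in one $r$-orbit, contradicting 2-cancellativity. This is a one-line fix, and the paper's own treatment of the analogous point is hardly more explicit.

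For part 2(i) you take a genuinely different route: you compute inside the Koszul dual, using the presentation from Remark \ref{remark:Groebner1} ($\xi_k^2 = 0$ and $\xi_u\xi_v = \pm\xi_1\xi_i$ for $uv \in \Ocal_i$), and show every cubic dual monomial collapses onto a multiple of $\xi_1^2\xi_k = 0$; the orbit statement then comes from reading Remark \ref{A!-zeroremark} backwards. The paper instead argues in the monoid: for $abc$ with $a \neq b$, $b \neq c$, Remark \ref{Morbitsrem} yields $t$ with $bc = at$ in $S$, so $abc = aat$, hence every $\Dcal_3(r)$-orbit meets $\Delta_2 \times X$, and $A_3^! = 0$ then follows from Remark \ref{A!-zeroremark}. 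Both arguments are short and both lean on paper-stated remarks about $A^!$; yours buys a purely algebraic rewriting proof, the paper's buys a self-contained combinatorial fact about $S_3$ that does not depend on the form of the dual relations. Finally, in 2(ii) your normal-word analysis ($N_2 = \Delta_2 \cup \{x_1x_i\}$ forces normal words of shape $x_1^{\,s}x_j^{\,m-s}$, so $\dim A_m \le (n-1)m+1$) is the paper's argument, but your inference "linear growth, hence $\gkdim A \le 1$" is false: linear growth of the graded components gives quadratic growth of the cumulative dimension, hence $\gkdim A \le 2$ and in general no better (compare $\textbf{k}[x,y]$, where $\dim A_m = m+1$ and $\gkdim = 2$). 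Since the proposition claims only $\gkdim A \le 2$, your bound still suffices, but the intermediate claim "$\le 1$" should be deleted.
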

\begin{proof}
Recall first that for arbitrary quadratic set $(X,r)$ the number of distinct words of length 2 in $S$ is exactly
 the number of $\Dcal_2(r)$-orbits in $X^2$,
 so one has
 \begin{equation}
    \label{orderS2eq}
  \dim A_2=  |S_2| = \sharp (\Dcal_2\text{-orbits in}\; X^2).
    \end{equation}
(1).
 (a) $\Longleftrightarrow$ (b).
 Suppose the minimality condition (\ref{mindim_eq1a}) holds. Then  $X^2$ splits into exactly
 $2n-1$ orbits, more precisely there are $n$ one element orbits, these are the elements of the diagonal $\Delta_2$
  and $n-1$ square-free orbit $\cO_i, 1\leq i \leq n-1$. Due to 2-cancellativity  one has $|\cO_i|\leq n$. At the same one has:
  \[n^2-n = |\bigcup_{1 \leq i \leq n-1} \cO_i | = \sum_{1 \leq i \leq n-1}|\cO_i| \leq n(n-1).\]
  Therefore $|\cO_i|=n$, for all $1 \leq i \leq n-1.$

  Conversely, suppose each nontrivial orbit $\cO$ has length $n$, and let $q$ be the number of square-free orbits $\cO.$
  There are exactly $(n-1)n$ square-free words in $X^2$, each is contained in some $\cO$, so
  $n(n-1) = n q$, and therefore $q=n-1$. Thus the total number of disjoint orbits in $X^2$ is
  $n + (n-1) = 2n-1$. It follows that $|S_2| = 2n-1$, and $\dim_{\textbf{k}} A_2= |S_2| = 2n-1$, so the minimality condition
  \textbf{M} holds.

  (b) $\Longrightarrow$ (c).
Suppose each non-trivial $\Dcal_2(r)$- orbit $\cO_i$ in $X^2$ has order $|\cO_i| = n, 1 \leq i \leq n-1$.
  It follows from Remark \ref{Morbitsrem}
  that for each $1 \leq i \leq n-1$ there exist unique $x \in X,$ such that $x_1x \in \cO_i.$  We re-enumerate the orbits (if necessary), so that
  $x_1x_i \in \cO_i.$
  Let $1 \leq i \leq n-1.$
  We order lexicographically the $n$ (distinct) words in $\cO_i$:
  \[\cO_i = \{ x_{in}y_{in} > x_{in-1}y_{in-1}> \cdots x_{i2}y_{i2} >x_{i1}y_{i1}=: x_1x_i\}.\]
  Each two monomials in $\cO_i$, considered as elements of $S$, are equal. This information is encoded by the set $\textbf{R}_i$
  of exactly $n-1$ "reduced" relations determined by $\cO_i$:
  \begin{equation}
    \label{mindim_eqR_i}
 \textbf{R}_i  : \quad x_{in}y_{in} = x_1x_i; \;  x_{in-1}y_{in-1}=  x_1x_i;\;  \cdots, \;x_{i2}y_{i2}=x_1x_i.
    \end{equation}
 As discussed in Section \ref{sec:quadratic} the set of defining relations $\Re(r)$ is equivalent to the set of reduced relations
 \[\textbf{R} = \bigcup_i \textbf{R}_i\]
 and the corresponding "algebra-type" relations are exactly the $n(n-1)$ reduced relations $\textbf{R}_0$ given in
 (\ref{mindim_eq1ab}).
It follows from the properties of the orbits $\cO_i$ that the relations in $\textbf{R}_0$ satisfy all additional conditions in part (c).

(c) $\; \Longrightarrow\;$   (b).
 The set of relations $\textbf{R}_0$ splits into $n-1$ disjoint subsets $\textbf{R}_i$, $1 \leq i \leq n-1.$
 Note that the properties of the relations given in part (3) imply that $(X,r)$ is 2-cancellative.
 It is clear that a relation $a = b \in \textbf{R}_i$ implies that $a, b $ belong to the same orbit $\cO$ in $X^2$.
We denote this orbit by $\cO_i$, one has $x_1x_i \in \cO_i$).
 One can also read off from the properties of $\textbf{R}_i$ that $\cO_i$ has exactly $n$-elemnts.
 Note that the sets $\Re_0(r)$ and $\textbf{R}_0$ generate the same two-sided ideal $I$ of $\textbf{k}\langle I\rangle$, so we get
    $A \cong \textbf{k}\langle X\rangle/ (\textbf{R}_0)$.
    It follows from the theory of Gr\"{o}bner bases that the ideal $I$ has unique reduced Gr\"{o}bner basis basis \textbf{GR}(I) (w.r.t $\leq$). Moreover, $\textbf{R}_0$ is a proper subset of
    $\textbf{GR}(I),$ (we assume $n \geq 3$) and all additional elements of $\textbf{GR}(I)\setminus \textbf{R}_0$ are homogeneous polynomials of degree $\geq 3.$
    Therefore, the set of normal monomials of length $2$:
    \[\Ncal_2 = \{x_1x_2, x_1x_3, \cdots , x_1x_n\}\bigcup\{x_ix_i\mid 1 \leq i \leq n\}\]
    projects to a $\textbf{k}$-basis of $A_2 \cong \textbf{k}S_2$, so this again implies $\dim_{\textbf{k}} A_2 = 2n-1$.

(2) Suppose $(X,r)$ satisfies the minimality condition \textbf{M}.
It follows from the argument in part (1) that
the normal basis $\Ncal$ of $A$ satisfies
\begin{equation}
    \label{Ncaleq}
 \Ncal \subseteq  \; \{x_1^{\alpha}x_i^{\beta}\mid 2 \leq i \leq n, \alpha \geq 0, \; \beta \geq 0 \}.
    \end{equation}
This implies that GK$\dim A \leq 2.$

We shall prove that  $A^{!}_3= 0$.
By Remark \ref{A!-zeroremark} it will be enough to show that each $\Dcal_3(r)$-orbit in $X^3$ contains a word of the type $xxy, x,y \in X$ (and $ztt, z,t \in X$).
Let $a,b,c \in X$. Without loss of generality, we may assume that $a\neq b$, and $b\neq c$.
Clearly, $bc \in \cO (bc) \subset X^2$, and
by Remark \ref{Morbitsrem}
the (square-free) orbit $\cO(bc)$ contains an element of the shape $at, t \in X$, thus $bc=at$ is an equality in $S_2$, so $abc = aat$ holds in $S_3$.
This implies that the $\Dcal_3$-orbit $\cO(abc)$ in $X^3$
contains the monomial $aat$.
It follows then that there are no square-free orbits in $X^3$, hence $A^{!}_3= 0$.
\end{proof}

Let $(X,r)$ be a quadratic set. A subset $Y\subset X$ is \emph{$r$-invariant} if $r(x,y) \in Y\times Y, \forall x,y \in Y$. In this case the restriction
$(Y, r_Y)$, where $r_Y:= r_{|Y\times Y}$  is a quadratic set.
A quadratic set $(X,r)$ is \emph{decomposable} if $X = Y\bigcup Z$ is a decomposition into nonempty disjoint $r$-invariant subsets.
Clearly, if $|Y|\geq 2$, then the restriction $(Y, r_Y)$ inherits from $(X,r)$ properties like nondegeneracy, 2-cancellativity, or being square-free.

\begin{definition}
\label{SDdef}
We call a quadratic set $(X,r)$ \emph{(left) self distributive}, or shortly \textbf{SD}, if it satisfies
\[{\bf SD  :} \quad r(x,y) = ({}^xy,x), \forall x,y \in X.\]
\end{definition}
\begin{lemma}
\label{minimaldim_lemma}
Suppose $(X,r)$ is a finite square-free nondegenerate quadratic set which is 2-cancellative, and satisfies the  minimality condition
\textbf{M}, let $|X|=n\geq 3$. Then $(X,r)$ is indecomposable. Moreover, if $(X,r)$ is a self distributive quadratic set, then for every $x\in X$ the permutation
$\Lcal_x$ has exactly one fixed point, so
\[\Lcal_x(x) = x, \quad \Lcal_x(y) \neq y, \forall x, y \in X, y \neq x.\]
\end{lemma}
\begin{proof}
 By Proposition \ref{prop:minimalityprop} $(X,r)$ satisfies the minimality conditions \emph{iff} the set of square-free words of length 2, $X^2 \setminus \Delta_2, \;$,
splits into $n-1$ disjoint  $\Dcal_2$-orbits  $\cO_i, 1 \leq i \leq n-1,$
each of which contains $n$ distinct monomials.

We shall prove first that $(X,r)$ is indecomposable.
Suppose $X = Y\bigcup Z$ is a decomposition into nonempty disjoint $r$-invariant subsets, say $|Y|=k \geq 2, |Z|= s \geq 1, k+s = n$.
The restriction  $(Y,r_Y)$
 is a nondegenerate, square-free and 2-cancellative quadratic set of order $k <n.$
 Let $x,y \in Y, x\neq y$, then the $\Dcal_2$-orbit $\cO (xy)$ in $X^2$ is
 contained entirely in $Y^2$, and by the 2-cancellativity of $(Y,r_Y)$, $|\cO (xy)| \leq k < n$. At the same time  $\cO (xy)$ is a $\Dcal_2$-orbit in $X^2$, so $\cO (xy) = \cO_i,$ for some $1\leq i \leq n-1$, and by Proposition \ref{prop:minimalityprop},  $\; |\cO (xy)|= |\cO_i|=n$, a contradiction.

 Suppose now that $(X,r)$ is a self distributive quadratic set with minimality condition. Let $x,y\in X, x\neq y.$ If we assume that $\Lcal_x(y) = y,$ then $r(xy) = yx$, and $r^2(xy) = r(yx) = {}^yxy$, hence (due to the -cancellativity) ${}^yx = x$. It follows that $\cO (xy)= \{xy, yx\}$, so  $|\cO (xy)| = 2 < n$, a contradiction with Proposition \ref{prop:minimalityprop}.
 \end{proof}

It follows from Corollary \ref{Mcorollary} that every square-free self distributive solution,  $(X,r)$
corresponding to a dihedral quandle of prime order $|X|= p>2$
satisfies the minimality condition \textbf{M}.
We do not know examples, where $(X,r)$ is a nondegenerate, square-free and 2-cancellative quadratic set of order $n \geq 3$, which satisfies the minimality condition \textbf{M},
but $(X,r)$ is not a solution of YBE.

Example \ref{order4} gives  a square-free braided set $(X,r)$ which is indecomposable (and injective), but does not satisfy the minimality condition \textbf{M}.

\begin{lemma}
\label{minimaldim_lemma2}
Suppose $(X,r)$ is a square-free self distributive quadratic set of finite order $|X|\leq 5$.
If $(X,r)$ is 2-cancellative and satisfies the minimality condition \textbf{M} then $(X,r)$ is a braided set.

More precisely, (up to isomorphism) either (a) $(X,r)$ is the quadratic set corresponding to the dihedral quandle of order $3$; or (b)
$(X,r)$ is the quadratic set corresponding to the dihedral quandle of order $5$.
\end{lemma}
\begin{sketchofproof}
Our assumptions imply that $(X, r)$ is nondegenerate and $\Lcal_x(y) \neq y, \forall x,y, \in X, x\neq y$, see Lemma \ref{minimaldim_lemma}.
The statement is straightforward for $|X|=3$. If $|X|=4$, then, $\forall x \in X$, the map $\Lcal_x = (y_1\; y_2\;y_3)$ is a cycle of length $3$.
Then a single r-orbit of length $4$ determines each map $\Lcal_x$ uniquely, which on its turn determines $r$, and all $r$-orbits in $X^2$ uniquely. A combinatorial argument shows that a 2-cancellative square-free quadratic set  $(X,r)$ with $|X|=4$ and the minimality condition does not exist.
Assume $|X|=5$. Then each map $\Lcal_x$ is either of the shape (a) $\Lcal_x = (y_1\; y_2\;y_3\;y_4)$, a cycle of length 4, where $y_i \neq x, 1 \leq i \leq 4$, or
(b) $\Lcal_x = (y_1\; y_2)(y_3\;y_4)$ is a product of disjoint transpositions where $y_i \neq x, 1 \leq i \leq 4$.
 Using a combinatorial argument one shows that if some $\Lcal_x = (y_1\; y_2\;y_3\;y_4)$, then $(X,r)$ is not 2-cancellative. It follows that only case (b) is possible.
 Then using an argument similar to the proof of Proposition
 \ref{quandle5} one shows that $(X,r)$ is a braided set isomorphic to the dihedral quandle of order $5$.
 \end{sketchofproof}

\subsection{Some basics on indecomposable injective racks}

\begin{lemma}
\label{2cancelprop2}
Suppose $(X,r)$ is an \textbf{SD} quadratic set.
\begin{enumerate}
\item
If $(X,r)$ is 2-cancellative then
\begin{enumerate}
\item
$(X,r)$ is nondegenerate;
\item
${}^xx = x, \; \forall \;x \in X$, so $(X,r)$ is square-free;
\item
${}^yx = x \Longleftrightarrow {}^xy = y, \; x, y \in X.$
\end{enumerate}
\item
 $(X,r)$ is involutive \emph{iff}  $(X,r)$ is the trivial solution.
\item $(X,r)$ is a braided set \emph{iff} the condition \textbf{laut} holds:
\[{\bf laut(x, y, z)  :}\quad {}^{x}{({}^yz)}={}^{{}^xy}{({}^xz)}, \; \forall x,y,z \in X.\]
\end{enumerate}
\end{lemma}
Self distributive braided sets are closely related to racks.
We recall some basics on racks,  we follow \cite{AnGra}.

\begin{definition}
\label{rackdef}
\cite{AnGra}
 \emph{A rack} is a pair $(X, \la)$, where $X$ is a nonempty set,  and $\la: X\times X\longrightarrow X$ is a map (a binary operation on $X$) such that
\begin{enumerate}
\item[(R1)] The map $\varphi_i: X\longrightarrow X, \; x \mapsto i\la x $ is bijective for all $i \in X$, and
\item[(R2)] $i\la(j\la k) = (i\la j)\la(i\la k)$.
\end{enumerate}
\emph{A quandle} is a rack which also satisfies $i\la i = i$, for all $i \in X.$

\emph{A crossed set} is a quandle such that $j \la i =i,$ whenever $i \la j =j.$
\end{definition}

\begin{remark}
\label{rackremark}
Suppose $(X,r)$ is an \textbf{SD} quadratic set.
Define $\la: X\times X\longrightarrow X$ as  $x \la y := {}^xy$. It is clear that
$(X, \la)$ is a rack
\emph{iff}  $(X,r)$ is a nondegenerate braided set. Moreover,
$(X,r)$ is a square-free braided set \emph{iff}  $(X, \la)$ is a quandle.

Conversely, every rack $(X, \la)$ defines canonically a nondegenerate SD braided set $(X,r)$, where $r(x,y) = (x\la y, y)$.
 Moreover $(X,r)$ is square-free \emph{iff} $(X, \la)$ is a quandle.
 It follows from Lemma \ref{2cancelprop2} that every rack $(X,r)$ which is 2-cancellative is a quandle, moreover $(X,r)$ is a crossed set.

 To simplify notation and terminology,  "\emph{a self distributive nondegenerate  braided set $(X,r)$ }" will be referred to as  "\emph{a rack}" and if in addition $(X,r)$ is square-free it will be also referred to as
 "a quandle".
 Under this convention we shall keep our usual notation and shall write "${}^xy$", instead of "$x \la y$".
 In this case there is an equality of maps:
\[\varphi_x = \Lcal_x, \; \forall  x \in X.\]
 \end{remark}

\emph{The inner group}, Inn(X) of a rack $X$ is the subgroup of $\Sym(X)$ generated by all permutations $\Lcal_x, x \in X$.
A rack $(X,r)$ is \emph{faithful} if the map $X \longrightarrow$ Inn(X), $x \longmapsto \Lcal_x$ is injective.
In fact, $X$ is indecomposable if and only if Inn(X) acts
transitively on $X$.

\begin{example} (Dihedral quandles)
\label{dihedralquandledef}
Let $n$ be a positive integer. Over the ring $\mathbb{Z}/n\mathbb{Z}$ of integers mod $n$
define $x\la y=2x-y$. This is a quandle known as \emph{the dihedral quandle} of order $n$. This is an Alexander quandle, see for example \cite{Nel03}. If
we assume that n is odd, we can identify the elements of this
quandle with the conjugacy class of involutions of $D_n$, the dihedral group of order $2n$.
Classification of Alexander quandles of prime order $p$ can be found for example in \cite{Nel03}.
These are particular cases of affine racks. Let $X$ be an abelian group
and let $g$ be an automorphism of $X$. Then $x\la y=(1-g)(x)+g(y)$ is a rack, \emph{an
affine rack over $X$.}
\end{example}
The following results are extracted from \cite{GrHVendramin11}

\begin{fact}
\label{factquandles}
\begin{enumerate}
Suppose $(X,r)$ is a finite \textbf{SD} braided set, and assume that the corresponding rack  $(X,\la)$ is indecomposable. Then
\item  $(X,\la)$ is faithful  \emph{iff} it is injective, \cite[Lemma 2.10]{GrHVendramin11}.

Clearly, in this case the solution $(X,r)$ is also injective.
\item
Suppose $X= \{x_1, \cdots , x_n\}$.
Then all permutations $\Lcal_i, i \in X$ have the same order $m$.
Moreover, the equalities $x_i^m = x_j^m$ hold in $G_X$ for all , $1 \leq i, j \leq n$, see \cite[Lemma 2.18]{GrHVendramin11}.
\item
For all $x \in X$ the permutation $\Lcal_x$ has exactly $1+k_2$ fixed points, where $k_2$ is the number of elements
$j \in X$ such that $\cO(1,j)$ has $2$ elements, see \cite[Lemma 2.25.3]{GrHVendramin11}.
\end{enumerate}
\end{fact}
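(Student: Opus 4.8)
The plan is to work throughout with the group homomorphism $\pi\colon G_X \longrightarrow \mathrm{Inn}(X)$ determined by $x \mapsto \Lcal_x$. This is well defined because, for an \textbf{SD} solution, $r(x,y) = ({}^xy, x)$, so the defining relations of $G_X = G(X,r)$ read $xyx^{-1} = {}^xy$, and applying $\pi$ turns each such relation into $\Lcal_x\Lcal_y\Lcal_x^{-1} = \Lcal_{{}^xy}$, which is exactly the rack axiom \textbf{laut}. Its image is $\mathrm{Inn}(X) = \langle \Lcal_x \mid x \in X\rangle$, and the composite $X \hookrightarrow G_X \xrightarrow{\pi} \mathrm{Inn}(X)$ is precisely the map $x \mapsto \Lcal_x$ whose injectivity defines faithfulness. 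A second observation to record at the outset is that a composite of left translations is realized by conjugation inside $G_X$: since ${}^zt = ztz^{-1}$ for $z,t \in X$, an induction shows that applying a sequence $\Lcal_{z_1},\dots,\Lcal_{z_k}$ to $x$ yields $uxu^{-1}$ with $u = z_1\cdots z_k \in G_X$; in particular any two elements in the same $\mathrm{Inn}(X)$-orbit are conjugate in $G_X$.

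For statement (1), the implication \emph{faithful} $\Rightarrow$ \emph{injective} is then formal: faithfulness says the composite $X \to \mathrm{Inn}(X)$ is injective, and a composite $g\circ h$ can be injective only if $h$ is, so $X \hookrightarrow G_X$, i.e. $(X,r)$ is injective. For the converse I would argue by contradiction: if $\Lcal_x = \Lcal_y$ with $x \neq y$, then $xzx^{-1} = yzy^{-1}$ for every $z \in X$, so $c := y^{-1}x$ commutes with all generators and hence lies in the center $Z(G_X)$, with $\deg c = 0$ for the length grading $\deg\colon G_X \to \ZZ$. Abelianizing and using indecomposability collapses all generators to a single class, so $G_X^{\mathrm{ab}} \cong \ZZ$ via $\deg$; the remaining and genuinely hard point is to promote this to the statement that $Z(G_X)$ carries no nontrivial element of degree $0$, which is exactly where injectivity and the structure theory of indecomposable racks enter. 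This is the main obstacle, and it is the content of Lemma 2.10 of \cite{GrHVendramin11}.

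For statement (2), transitivity of $\mathrm{Inn}(X)$ on the indecomposable $X$ gives, for each $i,j$, some $g \in \mathrm{Inn}(X)$ with $g(x_i) = x_j$; combined with $\Lcal_{g(x)} = g\Lcal_x g^{-1}$ (a consequence of \textbf{laut}), this yields $\Lcal_{x_j} = g\Lcal_{x_i}g^{-1}$, so all the $\Lcal_i$ are conjugate in $\mathrm{Inn}(X)$ and share one order $m$. Next, $x_i^m$ is central in $G_X$, since for any $z \in X$ one has $x_i^m z x_i^{-m} = \Lcal_{x_i}^m(z) = z$. Finally, writing $x_j = u x_i u^{-1}$ in $G_X$ from the conjugation observation above gives $x_j^m = u x_i^m u^{-1} = x_i^m$ by centrality, which is the asserted equality $x_i^m = x_j^m$ in $G_X$.

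For statement (3), the conjugacy of all $\Lcal_x$ established in (2) forces them to have identical cycle type, so it suffices to count the fixed points of a single translation $\Lcal_{x_1}$. Since $(X,r)$ is square-free, ${}^{x_1}x_1 = x_1$, so $x_1$ is always a fixed point, accounting for the summand $1$. For $j \neq 1$ a short computation with $r(x_1,x_j) = ({}^{x_1}x_j, x_1)$ shows that the $r$-orbit $\Ocal(x_1,x_j)$ has length $\geq 2$, with length exactly $2$ if and only if ${}^{x_1}x_j = x_j$, i.e. exactly when $x_j$ is a nontrivial fixed point of $\Lcal_{x_1}$. Thus the nontrivial fixed points are in bijection with the two-element orbits $\Ocal(x_1,x_j)$, giving the total $1 + k_2$ and reproducing Lemma 2.25.3 of \cite{GrHVendramin11}.
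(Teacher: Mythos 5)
The paper supplies no proof of this statement at all: it is presented as a Fact ``extracted from \cite{GrHVendramin11}'', with parts (1) and (3) explicitly attributed to Lemma 2.10 and Lemma 2.25.3 of that reference. So there is no internal argument to compare against, and your proposal must stand on its own. Part (2) does: the conjugacy of all $\Lcal_{x_i}$ under the transitive action of $\mathrm{Inn}(X)$, the centrality of $x_i^m$ in $G_X$ (from $x_i^m z x_i^{-m}=\Lcal_{x_i}^m(z)=z$ for every generator $z$), and the relation $x_j=ux_iu^{-1}$ in $G_X$ are all correct and together give $x_i^m=x_j^m$; this is a complete proof, which is more than the paper offers. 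In part (1), the direction faithful $\Rightarrow$ injective is indeed formal; but for injective $\Rightarrow$ faithful you only reduce the problem to the assertion that $Z(G_X)$ contains no nontrivial element of degree zero, and then invoke Lemma 2.10 of \cite{GrHVendramin11} --- i.e., the very statement being proved. That mirrors what the paper does (it cites the same lemma), but it means your proposal does not actually prove (1).

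Part (3) contains a genuine gap. With the paper's conventions, $r(x_1,x_j)=({}^{x_1}x_j,\,x_1)$, hence $r^2(x_1,x_j)=\bigl({}^{({}^{x_1}x_j)}x_1,\;{}^{x_1}x_j\bigr)$, so the orbit $\Ocal(x_1,x_j)$ has exactly two elements if and only if \emph{both} ${}^{x_1}x_j=x_j$ \emph{and} ${}^{x_j}x_1=x_1$ hold. Your claim that the two-element orbits correspond exactly to the nontrivial fixed points of $\Lcal_{x_1}$ silently drops the second condition; fixing is not a symmetric relation in a general quandle (symmetry is precisely the ``crossed set'' condition of Definition \ref{rackdef}, satisfied by conjugation quandles and affine quandles, but not by all quandles). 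As written, your count establishes only that the number of fixed points of $\Lcal_{x_1}$ is \emph{at least} $1+k_2$, and an additional argument --- whatever \cite{GrHVendramin11} actually uses in Lemma 2.25, beyond the conjugacy of the $\Lcal_x$ --- is needed to get equality. A secondary point: you invoke square-freeness to count $x_1$ itself as a fixed point, but the hypothesis of the Fact is only that $(X,r)$ is an \textbf{SD} braided set, i.e.\ a rack; one must add that the racks in question are quandles (which holds, e.g., under 2-cancellativity by Remark \ref{2cancelprop2}) before asserting ${}^{x_1}x_1=x_1$.
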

\begin{remark}
\label{Mcorollary}
Suppose that $(X,r)$ is an indecomposable quandle of order $|X|\geq 3$, and every nontrivial $r$-orbit $\cO \subset X^2$ has order $3 \leq |\cO|\leq |X|$.
Then
(i) for every $x\in X$
the permutation $\Lcal_x$ has unique fixed point, namely $x$.
(ii) Moreover, if $\Lcal_x^2 = \id, \forall x \in X$,   then $X$ has an odd order  $|X|=2k+1.$
In this case  $\Lcal_x$ is a product of $k$ disjoint transposition.
\end{remark}

\subsection{Quandles with minimality condition  \textbf{M}}

\begin{corollary}
\label{Mcorollary}
Suppose $(X,r)$ is a 2-cancellative  \textbf{SD}
braided set of finite order $n=|X| \geq 3$, assume that $\Lcal_x^2 = \id, \forall x \in X$.

If $(X,r)$ satisfies the minimality condition \textbf{M} (hence X is indecomposable), then (i) $X$ has an odd order, $n=2k+1$, and
(ii) each $\Lcal_x, x \in X$ is a product of $k$ disjoint transpositions.
\end{corollary}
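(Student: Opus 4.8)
The plan is to recognize $(X,r)$ as an indecomposable quandle whose every nontrivial $r$-orbit has the maximal possible size $n$, and then to count the fixed points of each left translation $\Lcal_x$. To set this up: since $(X,r)$ is \textbf{SD} and $2$-cancellative, Remark \ref{2cancelprop2} gives that $(X,r)$ is non-degenerate and square-free, so being a square-free \textbf{SD} braided set it is a quandle (Remark \ref{rackremark}), and in particular ${}^xx=x$ for every $x\in X$. The parenthetical indecomposability is not an extra hypothesis: by Lemma \ref{minimaldim_lemma} a finite square-free non-degenerate $2$-cancellative braided set satisfying \textbf{M} is automatically indecomposable, so Fact \ref{factquandles} is available for $(X,r)$.

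Next I would translate the minimality condition into a statement about orbit sizes. By Proposition \ref{prop:minimalityprop}, condition \textbf{M} is equivalent to the requirement that every nontrivial $r$-orbit $\Ocal\subset X^2$ has exactly $n$ elements. Since $n\geq 3$, this yields $|\Ocal|=n\geq 3$ for every such orbit; in particular $X^2$ contains no $r$-orbit of size $2$.

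The heart of the argument is then a fixed-point count. Fact \ref{factquandles}(3) asserts that for every $x\in X$ the permutation $\Lcal_x$ has exactly $1+k_2$ fixed points, where $k_2$ counts the $r$-orbits of size $2$ through the base point. By the previous step $k_2=0$, so each $\Lcal_x$ has a single fixed point, which must be $x$ itself since ${}^xx=x$; this proves that every left translation fixes exactly one element. Invoking now the hypothesis $\Lcal_x^2=\id$, each $\Lcal_x$ is an involution of the $n$-element set $X$, and an involution with a unique fixed point moves the remaining $n-1$ points, pairing them into $2$-cycles, so $n-1$ is even. Hence $n=2k+1$ is odd, proving (i), and $\Lcal_x$ is a product of exactly $k=(n-1)/2$ disjoint transpositions, proving (ii).

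I do not expect a genuine obstacle: once the hypotheses are matched correctly the statement is an assembly of earlier results. The only point demanding care is the passage from \textbf{M} to ``no $r$-orbit of size $2$'', which rests squarely on the equivalence (1a)$\Leftrightarrow$(1b) of Proposition \ref{prop:minimalityprop}; it is precisely the maximality of the orbit sizes that annihilates the term $k_2$ and, through the involution hypothesis $\Lcal_x^2=\id$, forces the odd order.
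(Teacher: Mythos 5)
Your proposal is correct and follows essentially the same route the paper intends: translate condition \textbf{M} into ``every nontrivial $r$-orbit has size $n\geq 3$'' via Proposition \ref{prop:minimalityprop}, get indecomposability from Lemma \ref{minimaldim_lemma}, kill the term $k_2$ in the fixed-point count of Fact \ref{factquandles}(3) so that each $\Lcal_x$ fixes only $x$, and conclude by the parity argument for an involution with a unique fixed point. The only difference is presentational: the paper routes this through its preceding remark on indecomposable quandles whose nontrivial orbits have order at least $3$, whereas you invoke Fact \ref{factquandles}(3) directly, which amounts to the same thing.
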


The following result is well known to the experts.
\begin{lemma}
\label{Mremark}
If (X,r) is a dihedral quandle of prime order  $|X|=p$
then each non-trivial  $r$-orbit $\cO$ in $X^2$ has length exactly $p$.
\end{lemma}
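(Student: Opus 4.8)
The plan is to write the map $r$ down explicitly and compute the cyclic $\langle r\rangle$-action on $X^2$ in closed form. Since $(X,\la)$ is the dihedral quandle on $X=\mathbb{Z}/p\mathbb{Z}$ with $x\la y = 2x-y$ (Example \ref{dihedralquandledef}), and the associated solution is self-distributive, Definition \ref{SDdef} together with Remark \ref{rackremark} give
\[
r(x,y) = ({}^xy,\,x) = (2x-y,\,x).
\]
An $r$-orbit is a $\Dcal_2(r)=\langle r\rangle$-orbit in $X^2$, so it suffices to determine the order of $r$ on each pair $(x,y)$.

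First I would compute the iterates. The key observation is that $r$ preserves the difference of the two coordinates: if $a-b=d$, then
\[
r(a,b) = (2a-b,\,a) = (a+d,\,a),
\]
and the new difference is again $(a+d)-a=d$. Hence $r$ acts by simultaneously adding the constant $d=x-y$ to both entries, and a one-line induction yields the closed form
\[
r^n(x,y) = \bigl(x+n(x-y),\; y+n(x-y)\bigr), \qquad n\ge 0.
\]
Equivalently, one may solve the linear recurrence $a_{n+1}=2a_n-a_{n-1}$, whose characteristic polynomial $(t-1)^2$ has a double root at $1$, forcing the solution to be linear in $n$.

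From this the conclusion is immediate. The orbit $\Ocal(x,y)$ is trivial precisely when $d=x-y=0$, i.e. on the diagonal $\Delta_2$, consistent with $\Fcal(X,r)=\Delta_2$ for square-free sets (Lemma \ref{lem:2cancellativeprop}). For a nontrivial orbit we have $x\neq y$, so $d\not\equiv 0 \pmod p$. The orbit length is then the least positive integer $\ell$ with $r^\ell(x,y)=(x,y)$, i.e. with $\ell d\equiv 0 \pmod p$. Since $p$ is prime and $d$ is a nonzero residue, $d$ is invertible in $\mathbb{Z}/p\mathbb{Z}$, so $\ell d\equiv 0$ forces $\ell\equiv 0 \pmod p$; the minimal positive solution is $\ell=p$. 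Therefore every nontrivial $r$-orbit has length exactly $p$.

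There is no serious obstacle here: the only substantive step is recognizing the iterated action as a simultaneous translation by $d$ in both coordinates, after which primality of $p$ does all the work. The one point worth stating carefully is that primality is exactly what is needed — for composite order a difference $d$ sharing a factor with the modulus would produce shorter orbits, so the argument genuinely uses that $d\neq 0$ makes $d$ a unit.
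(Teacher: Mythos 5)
Your proof is correct and follows essentially the same route as the paper: the paper also computes the iterates in closed form, $r^k(x,y)=((k+1)x-ky,\,kx-(k-1)y)$, which is exactly your $(x+kd,\,y+kd)$ with $d=x-y$, and then invokes primality of $p$ to conclude the orbit has length $p$. Your observation that $r$ acts as simultaneous translation by the invariant difference $d$ is a slightly cleaner way to obtain that formula than the paper's bare induction, but it is the same argument in substance.
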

\begin{proof}
Let $x,y \in X$, $x\neq y$. Then, by definition, $r(x,y)=(2x-y, x)$.  One
proves by induction that
\begin{equation}
\label{dihedralorbiteq}
r^k(x,y)=( (k+1)x-ky, kx-(k-1)y)
\end{equation}
But all maps $r^k$ are bijections and $(k+1)x-ky=x$ if and only if $k=0$ mod $p$,
which implies that the $r$-orbit $\cO(x,y)$ in $X^2$ has size $p$.
\end{proof}
Recall that the dihedral quandles, and the Alexander quandles are well known since decades. A classification of Alexander quandles of prime order $p$
 can be found for example in \cite{Nel03}.
The particular proof of Lemma \ref{Mremark} was kindly provided to us by Leandro Vendramin.
\begin{definition}
\label{Mremark2}
If (X,r) is a dihedral quandle of prime order  $|X|=p$, it is called \emph{an Alexander quandle}. It can be identified with the set of reflections of a regular
$p$-gon (elements of the dihedral group $D_{2p}$).
\end{definition}

\begin{corollary}
\label{Mcorollary}
Suppose that  $(X,r)$ is a square-free \textbf{SD} solution, corresponding to a dihedral quandle of order $|X|= p>2$,  where $p$ is a prime number.
Then $(X,r)$ satisfies the minimality condition \textbf{M}.
\end{corollary}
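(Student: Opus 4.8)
The plan is to deduce the statement immediately from the orbit-length computation in Lemma \ref{Mremark} together with the characterization of the minimality condition in Proposition \ref{prop:minimalityprop}. Write $n = p$ and let $(X,r)$ be the \textbf{SD} solution with ${}^xy = 2x - y$ over $\mathbb{Z}/p\mathbb{Z}$, so $r(x,y) = (2x-y, x)$. First I would record the structural facts needed to invoke Proposition \ref{prop:minimalityprop}: since $(X,r)$ is a quandle, Remark \ref{rackremark} gives that it is a square-free non-degenerate braided set, so in particular $\Fcal(X,r) = \Delta_2$ and $|\Fcal(X,r)| = n$.

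The one point that genuinely must be checked is $2$-cancellativity, since this is the standing hypothesis of Proposition \ref{prop:minimalityprop}. For this I would use the explicit formula (\ref{dihedralorbiteq}), namely $r^k(x,y) = ((k+1)x - ky,\ kx - (k-1)y)$. As observed in the proof of Lemma \ref{Mremark}, for $x \neq y$ the orbit $\Ocal(x,y)$ in $X^2$ has length exactly $p$, so the order of $r$ on $X^2$ is $p$ (the remaining orbits being the $p$ singletons of the diagonal $\Delta_2$). Thus it suffices to treat $0 < k < p$. If $r^k(x,y) = (x,z)$, then comparing first coordinates gives $(k+1)x - ky = x$, i.e.\ $k(x-y) = 0$ in $\mathbb{Z}/p\mathbb{Z}$; as $p$ is prime and $0 < k < p$, this forces $x = y$, and then $z = y$ as well. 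The condition $r^k(x,y) = (t,y)$ is handled identically by comparing second coordinates, which again yields $k(x-y) = 0$. Hence $r$ is $2$-cancellative.

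With $2$-cancellativity in hand, Lemma \ref{Mremark} tells us that every nontrivial $r$-orbit $\Ocal_i$ in $X^2$ has order $|\Ocal_i| = p = n$. This is precisely condition (1b) of Proposition \ref{prop:minimalityprop}, which by that proposition is equivalent to condition (1a), namely that $(X,r)$ satisfies the minimality condition \textbf{M}, i.e.\ $\dim_{\textbf{k}} A_2 = 2n - 1 = 2p - 1$. This completes the argument.

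I do not expect any serious obstacle here: the whole content has already been isolated in Lemma \ref{Mremark} and Proposition \ref{prop:minimalityprop}, and the corollary is essentially their composition. The only care required is the verification of $2$-cancellativity above, which is what licenses the use of Proposition \ref{prop:minimalityprop}; alternatively one could observe that a dihedral quandle of prime order is indecomposable and faithful, hence injective by Fact \ref{factquandles}, and then invoke the remark after Definition \ref{def:cancellsol} that every injective quadratic set is $2$-cancellative.
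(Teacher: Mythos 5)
Your proof is correct and follows essentially the route the paper intends: the corollary is stated immediately after Lemma \ref{Mremark} with no separate proof, its implicit argument being exactly your composition of that lemma with the equivalence (1a) $\Leftrightarrow$ (1b) of Proposition \ref{prop:minimalityprop}. Your explicit verification of $2$-cancellativity via the formula (\ref{dihedralorbiteq}) supplies a hypothesis of Proposition \ref{prop:minimalityprop} that the paper leaves unchecked, so it is a welcome completion of the same argument rather than a different approach.
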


\subsection{Concrete examples of quandles}
We have applied our results on square-free solutions $(X,r)$ to find various examples, as solutions on the following natural problem.
\begin{problem}
\label{problemM}
\textbf{\emph{Given the following data:}}
(a) A set $X$ of odd cardinality $n = 2k+1$;
(b) a cyclic  permutation $r_0 \in \Sym (X^2 \setminus \Delta_2)$ of order $n$
\[\cO: a_1b_1 \longrightarrow_{r_0} a_2b_2 \longrightarrow_{r_0} \cdots \longrightarrow_{r_0}   a_{n} b_{n}\longrightarrow_{r_0}a_1b_1
\]
where $a_i\neq b_i, 1 \leq i \leq n$, $a_i \neq a_j, b_i \neq b_j$, whenever $i \neq j$, $1\leq i,j\leq n$.

\textbf{\emph{Find}} \emph{an extension} $r : X\times X  \longrightarrow X\times X $ of $r_0$
(equivalently, \emph{find all maps $\Lcal_x, x \in X$ explicitly}),
so that

(i) $(X,r)$ is a 2-cancellative square-free \textbf{SD}
\emph{quadratic set} (we do not assume $(X,r)$ is a solution) ;

(ii) $\Lcal_x^2 = \id, \forall  x \in X$.

\emph{Analyze the obtained quadratic set}. In particular, decide (a) whether this data determines an \textbf{SD} solution of YBE?
(b) If moreover, $n$ is a prime number
 and the quadratic set $(X,r)$ satisfies the minimality condition \textbf{M} does this imply that $(X,r)$ is a braided set?
\end{problem}

\begin{remark} In earlier versions of this paper Problem \ref{problemM} was posed for the case when  $(X,r)$ is a 2-cancellative square-free
self distributive \emph{braided set}, with $\Lcal_x^2 = \id$, see \cite[Problem 6.4.1]{GI19}.  Under the strong assumption that $(X,r)$ is an SD braided set this problem has been solved in \cite[Prop.6.2]{CJO19}.
It is interesting and more difficult to consider Problem \ref{problemM} in its present version, see also Open question in Subsec \ref{sec:open_questions}
\end{remark}


We give some concrete examples.
The first illustrates a solution of Problem \ref{problemM} on a quadratic set $(X,r)$ of order $5$.

\begin{proposition}
\label{quandle5}
Let $X$ be a set of order $|X|=5$, to simplify notation we set $X = \{ 1, 2, 3, 4, 5\}$, as it is often used for racks.
Suppose $(X,r)$ is a quadratic \textbf{SD} set, so $r(x,y)= ({}^xy, x), x,y \in X$,
and $\Lcal_x^2 = \id, \forall x \in X$. Suppose $\langle r\rangle$ has an orbit of length $5$,
say:
\begin{equation}
\label{orbit1}
\cO(12):\quad   54\longrightarrow_{r} 35 \longrightarrow_{r} 23 \longrightarrow_{r} 12 \longrightarrow_{r} 41 \longrightarrow_{r} 54.
\end{equation}
Then the following conditions hold.
\begin{enumerate}
\item
The orbit (\ref{orbit1}) determines the maps $\Lcal_i, i\in X$ (and $r$) uniquely, so that  $(X, r)$ is a nondegenerate 2-cancellative quadratic set with minimality condition. In this case the left actions are:
\begin{equation}
\label{orbit2}
\begin{array}{l}
\Lcal_1 = (2\;4)(3\;5),\quad  \Lcal_2 = (1\;3)(4\;5), \quad  \Lcal_3 = (2\;5)(1\;4)\\
\Lcal_4 = (1\;5)(2\;3), \quad \Lcal_5 = (3\;4)(1\;2).
\end{array}
\end{equation}
$X^2$ splits into $4$ $r$-orbits of length $5$: $\cO(1\;i), 2 \leq i \leq 5$, and 5 one-element orbits for the elements of the diagonal $\Delta_2$.
\item
Consider the degree-lexicographic order $\leq$ on $\asX,$ induced by  $1 < 2 < 3 < 4 < 5$ on $X$.
The set  of defining relations $\Re(r)$
reduces (and is equivalent) to the following set of 16 quadratic relations:
\begin{equation}
\label{defrel1}
\begin{array}{lllll}
\textbf{R}= \{ &54 = 12   &41 = 12 &35 = 12   &23 = 12  \\
          &53 = 14    &45 = 14 &32 = 14  &21 = 14 \\
          &52  =15    &43= 15  &31 = 15   &24 = 15\\
          &51  = 13   &42 = 13 &34 = 13   &25= 13\}.
\end{array}
\end{equation}

\item
Moreover, $(X,r)$ is a braided set isomorphic to the dihedral quandle of order $5$.
The solution $(X,r)$  is also injective.
\item
Let $A =A(\textbf{k}, X, r) = \textbf{k}\langle X ; \textbf{R}\rangle \cong \textbf{k}\asX/(I) $ be the associated  quadratic algebra graded by length. The ideal $I$ is generated by the set  \[\textbf{R}_0  = \{u-v \mid u = v \in \textbf{R} \} \subset \textbf{k}\asX.\]
   It is not difficult to find that the reduced Gr\"{o}bner  basis $\textbf{GB}(\textbf{R}_0)$ contains  $4$ additional relations:
   \[\textbf{GB}(\textbf{R}_0) = \textbf{R}_0 \bigcup \{155-122, \;144-122, \; 133-122, \; 1222-1112\}.\]
  It follows that $A$ is standard finitely presented.
  \item $A$ is left and right Noetherian.
 \item $GK\dim A= 1;$  $A_3^{!}= 0$
\item
The monoid $S$ is not cancellative, $S$ satisfies the relations (\ref{defrel1})
and the  following relations derived from the Gr\"{o}bner basis $\textbf{GB}(\textbf{R}_0)$
\[155=122, \;144=122, \; 133=122,\; 1222=1112. \]
\item
The group $G(X,r)$
satisfies the relations (\ref{defrel1})
which (only in the group case) give rise to the following new relations in $G$:
\[55 = 44= 33= 22= 11.\]
We have deduced these relations straightforwardly from the  Gr\"{o}bner basis (without using the theory of racks). Of course, they agree with Fact \ref{factquandles}
\end{enumerate}
\end{proposition}
\begin{sketchofproof}
Our assumption $\Lcal_x^2 = \id, \forall x \in X$, and (\ref{orbit1}) imply that the permutations $\Lcal_i, 1 \leq i \leq 5$ are products of disjoint cycles of the shape
\[
\begin{array}{l}
\Lcal_1 = (2\;4)\sigma_1,\quad  \Lcal_2 = (1\;3)\sigma_2, \quad  \Lcal_3 = (2\;5)\sigma_3\\
\Lcal_4 = (1\;5)\sigma_4, \quad \Lcal_5 = (3\;4)\sigma_5,
\end{array}
\]
where for each $1\leq i \leq 5$, $\sigma_i$ is either a transposition, or $\sigma_i= id_X$.
However, we assume that $(X,r)$ satisfies the minimality condition and therefore by Lemma
\ref{minimaldim_lemma}
$\Lcal_x(x) = x, \quad \Lcal_x(y) \neq y, \forall x, y \in X, y \neq x.$ Therefore the maps $\Lcal_x, x \in X$ are uniquely determined and satisfy
(\ref{orbit2}). It is not difficult to check that $(X,r)$ is a braided set, isomorphic to the dihedral quandle of order $5$.
One uses routine technique of computation with Gr\"{o}bner bases to verify the remaining properties of $(X,r)$ listed above.
\end{sketchofproof}

\begin{corollary}
\label{Corquandle5}
Suppose $(X,r)$ is a 2-cancellative \textbf{SD} quadratic set of order $|X|=5$, and $\Lcal_x^2 = \id, \forall x \in X$. The following conditions are equivalent.
\begin{enumerate}
\item $\Lcal_x(y) \neq y, \forall x, y\in X, x \neq y$; \item $(X,r)$ satisfies the minimality condition; \item $(X,r)$ is a braided set;
\item $(X,r)$ is isomorphic to the dihedral quandle of order $5$.
\end{enumerate}
\end{corollary}

\begin{example}
\label{TypeA_order9}
Suppose $(X,r)$ is an \textbf{SD} quadratic set of order $9$.
Assume that $(\Lcal_x)^2 = 1,  \forall x \in X$, and the map $r$
 has a concrete orbit of order $9$, say:
\begin{equation}
\label{orbit9}
\begin{array}{ll}
\cO(98) : \quad 98  &\rightarrow_{r} 79 \rightarrow_{r} 67 \rightarrow_{r} 56 \rightarrow_{r} 45\\
&\rightarrow_{r} 34\rightarrow_{r} 23
\rightarrow_{r}   12\rightarrow_{r} 81  \rightarrow_{r} 98
\end{array}
\end{equation}
Then the orbit (\ref{orbit9}) determines the permutations $\Lcal_x, x \in X$ uniquely, so that  $(X, r)$ is a nondegenerate 2-cancellative braided set, with $\Lcal_x^2 = 1, x \in X$.
More precisely,
\begin{enumerate}
\item
$(X,r)$ is a braided set \emph{iff}  the left actions satisfy
\begin{equation}
\label{orbit2a}
\begin{array}{l}
\Lcal_{9} = (1\;6)(2\;5)(3\;4)( 7\;8)\quad \Lcal_{8} = (1\;9)(2\;7)(3\;6)(4\;5)\\
\Lcal_{7} = (1\;4)(2\;3)(5\;8)( 6\;9) \quad \Lcal_{6} = (1\;2)(3\;8)(4\;9)(5\;7)\\
\Lcal_{5} = (1\;8)(2 \;9)(3 \;7)(4 \;6)\quad \Lcal_{4} = (1\;7)(2\;6)(3\;5)(8\;9)\\
\Lcal_{3} = (1\;5)(2\;4)(6\;8)(9\;7) \quad \Lcal_{2} = (1\;3)(4\;8)(6\;7)(5\;9)\\
\Lcal_{1} = (2\;8)(3\;9)(5\;6)(4\;7)
\end{array}
\end{equation}
\item
In this case the non-trivial $r$- orbits of $X^2$ are:
\begin{equation}
\label{orbit2a}
\begin{array}{ll}
\cO(12), \cO(13),\cO(15), \cO(16),  \cO(18), \cO(19)  \quad \text{these are $6$ orbits of order 9}\\
 \cO(14), \cO(41), \cO(36), \cO(63), \cO(29), \cO (92) \quad \text{these are $6$ orbits of order 3}.
\end{array}
\end{equation}
Moreover, $(X, r)$ decomposes as a union of three $r$ -invariant subsets
\[X = X_1\bigcup X_2\bigcup X_3, \quad X_1 = \{1, 4, 7\}, X_2 = \{3, 6, 8\}, X_3 = \{2, 5, 9\},\]
and $r$ induces maps
\[X_1\times X_2\longrightarrow X_3\times X_1\longrightarrow X_2\times X_3\longrightarrow X_1\times X_2 . \]
Each $(X_i, r_i), 1 \leq i\leq 3$ is an \textbf{SD} solution whose quandle is isomorphic to the dihedral quandle of order $3$.
 \end{enumerate}
\end{example}

Next we give an example of an indecomposable square-free solution $(X,r)$ of order
 $|X|= 4$ which fails to satisfy the minimality condition \textbf{M}.

 \begin{example}
\label{order4}
Suppose $(X,r)$ is a square-free quadratic \textbf{SD} set of order $|X|=4$, so
$r(x,y)= ({}^xy, x)$. We again simplify notation setting $X = \{ 1, 2, 3, 4\}$.
Suppose $\Lcal_4$ is not involutive, and $\langle r\rangle$ has an orbit of length $3$
say:
\begin{equation}
\label{orbit14}
\cO(24):\quad   43\longrightarrow_{r} 24 \longrightarrow_{r} 32 \longrightarrow_{r} 43.
\end{equation}

 The orbit (\ref{orbit14}) determines the maps $\Lcal_i, i\in X$ (and $r$)  uniquely, so that  $(X, r)$ is a 2-cancellative solution.
More precisely,
\begin{enumerate}
\item
$(X,r)$ is a braided set \emph{iff}  the left actions are:
\begin{equation}
\label{orbit24}
\begin{array}{l}
\Lcal_1 = (2\;3\;4),\quad  \Lcal_2 = (1\;4\;3), \\
\Lcal_3 = (1\;2\;4), \quad \Lcal_4 = (1\;3\;2).
\end{array}
\end{equation}
\item
In this case
 $X^2$ has 4 $r$-orbits of length $3$
 (it is easy to write them explicitly), and 4 one-element orbits for the elements of $\diag(X^2)$.
\item
We consider the degree-lexicographic order $\leq$ on $\asX,$ induced by   $1 < 2 < 3 < 4$ on $X$.
The set  of defining relations $\Re(r)$
reduces (and is equivalent) to the following set of 8 quadratic relations:
\begin{equation}
\label{defrel1a}
\begin{array}{lllll}
\textbf{R}= \{ &43 = 24   &32 = 24 &   42= 14   &21 = 14  \\
          &41 = 13    &34 = 13 &   31 = 12  &23 = 12.
\end{array}
\end{equation}

\item
Let $A =A(\textbf{k}, X, r) = \textbf{k}\langle X ; \textbf{R}\rangle \cong \textbf{k}\asX/(I) $ be the associated  quadratic algebra graded by length. The ideal $I$ is generated by the set  \[\textbf{R}_0  = \{u-v \mid u = v \in \textbf{R} \} \subset \textbf{k}\asX.\]
   It is not difficult to show that the reduced Gr\"{o}bner basis $\textbf{GB}(\textbf{R}_0)$ contains  $4$ additional relations:
   \[\textbf{GB}(\textbf{R}_0) = \textbf{R}_0 \bigcup \{244-133, \;224-122, \; 1444-1222, \; 1333-1222\}.\]
  It follows that $A$ is standard finitely presented.

  \item
  The set $\Ncal$ of normal (mod $I$) monomials, which projects to a $\textbf{k}$-basis of $A$ satisfies:
  \[
    \begin{array}{ll}
    \Ncal \supset &X\bigcup \{12, 13, 14, 24\}\bigcup\{112, 113, 114, 122, 124, 133, 144 \}\\
    &\bigcup\{1^k2^m, k\geq 1, m \geq 3\}\bigcup \{x^k\mid x \in X, k \geq 2\}.
\end{array}\]
\item $\dim_{\textbf{k}} A_2 = 8 > 2|X|-1$.
\item $GK\dim_ A= 2.$
\item $(X,r)$ is indecomposable, and injective, but $(X,r)$ does not satisfy the minimality condition \textbf{M}.
\item
The monoid $S$ satisfies the relations (\ref{defrel1a})
and also the  following relations derived from the Gr\"{o}bner basis
\[244=133, \;224=122, \; 1444=1222,\; 1333=1222. \]
In particular $S$ is 3-cancellative, but $S$ is not cancellative .
\item
The group $G(X,r)$
satisfies the relations (\ref{defrel1a})
which (only in the group case) give rise to the following new relations in $G:$ $444 = 333= 222=111$.
\end{enumerate}
\end{example}

\section{A class of special extensions}
\label{sec:specialext}

\begin{remark} Let $(X,r)$ be a quadratic set.  A permutation $\tau \in \Sym(X)$ is \emph{an automorphism
of $(X, r)$} (or shortly an \emph{r-automorphism}) if
$(\tau \times \tau) \circ  r = r \circ (\tau \times \tau)$. The
group of $r$-automorphisms of $(X, r)$ is denoted by $\Aut(X,
r)$.
\end{remark}

In the hypothesis of the following theorem $(X,r_X), (Y,r_Y)$ are most general disjoint braided sets. No restrictions like nondegeneracy
or 2-cancellativeness are imposed.
\begin{theorem}
\label{theor:irregularext} Let $(X,r_X),$ and $(Y,r_Y)$ be
disjoint braided sets, and  let $Z=X\bigcup Y$. Suppose $\sigma
\in \Sym (X), \;\sigma \neq 1$, $\tau \in \Sym (Y), \;\tau\neq 1$.
 Define a bijective map $r: Z\times Z
\longrightarrow Z\times Z$, as follows
 \[
 \begin{array}{l}
   r(y,x):=(\sigma(x),\tau(y)); \quad r(x,y):=(\tau(y),\sigma(x)),\;  \forall \; x\in X, y \in
Y.\\
r(x_1,x_2):= r_X(x_1,x_2),\; \forall
x_1,x_2\in X,\quad
 r(y_1,y_2):= r_Y(y_1,y_2),\; \forall y_1,y_2\in Y, \\
\end{array}
\]
Then $(Z,r)$ is a quadratic set which satisfies the following conditions.
\begin{enumerate}
\item \label{irregularext1} $(Z,r)$ is nondegenerate \emph{iff}
both $(X,r_X),$ and $(Y,r_Y)$ are nondegenerate. \item
\label{irregularext2} $(Z,r)$ is 2-cancellative \emph{iff} (i)
both $(X,r_X),$ and $(Y,r_Y)$ are 2-cancellative; and (ii) the
maps $\sigma$ and $\tau$ (considered as permutations) are products
of disjoint cycles of the same length $q$. Clearly, in this case
 $|\sigma|=|\tau|= q$.
\item
\label{irregularext3}
Suppose the conditions (\ref{irregularext2}) are satisfied.
For each pair $x\in X, y\in Y$ consider the $r$-orbit $\cO(xy) = \{r^k(xy)\mid k \geq 0\}$ in $Z^2$.
\begin{enumerate}
\item If $q$ is even,  $q = 2m,$ then $|\cO(xy)|= q$. In this case the order $|r|$ of the map $r$ is the least common multiple
of the three orders,  \textbf{LCM}$(|r_X|, |r_Y|, q$).
\item If $q$ is odd,  $q = 2m+1$, then $|\cO(xy)|= 2q.$ In this case the order $|r|$ of $r$ is the least common multiple
\textbf{LCM}$(|r_X|, |r_Y|, 2q)$.
\end{enumerate}
\item
\label{irregularext4}
 The quadratic set $(Z, r)$ is \emph{a regular extension} of $(X,r_X),$ and $(Y,r_Y)$,  in the sense of \cite{GIM08}  if and only if $\sigma^2 = \tau^2 = 1$.
Moreover, $(Z, r)$ is involutive \emph{iff} (i) $\sigma^2 = \tau^2 = 1$, and (ii) $(X,r_X)$ and $(Y, r_Y)$ are involutive.
\item
\label{irregularext5}
$(Z,r)$ obeys the YBE if and only if the following conditions hold:
\begin{enumerate}
\item $\sigma \in \Aut (X,r_X)$ and  $\tau \in \Aut (Y,r_Y)$;
\item The left and the right actions satisfy the following conditions.
\begin{equation}
\label{eqAut01}
\begin{array}{llll}
&\Lcal_{\sigma^2(x)} = \Lcal_{x},\quad& \Rcal_{\sigma^2(x)} = \Rcal_{x}&\text{hold in} \; (X,r_X), \;\forall x \in X.\\
&\Lcal_{\tau^2(y)} = \Lcal_{y}\quad &\Rcal_{\tau^2(y)} = \Rcal_{y}&\text{hold in} \; (Y,r_Y), \;\forall y \in Y.
\end{array}
\end{equation}
\end{enumerate}
In this case $(Z,r) = (X,r_X)\stu^{\ast} (Y,r_Y)$ is
a generalized strong twisted union of $X$ and $Y$,
see Definition \ref{STUgendef}.
\end{enumerate}
\end{theorem}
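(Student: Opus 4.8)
The plan is to exploit the block structure of $r$ on $Z^2 = X^2 \sqcup (X\times Y)\sqcup (Y\times X)\sqcup Y^2$. On the two homogeneous blocks $r$ restricts to $r_X$ and $r_Y$, while it interchanges the two mixed blocks $X\times Y$ and $Y\times X$. A direct computation gives the crucial closed forms: for $x\in X,\ y\in Y$,
\[
r^{2k}(x,y)=(\sigma^{2k}x,\ \tau^{2k}y),\qquad r^{2k+1}(x,y)=(\tau^{2k+1}y,\ \sigma^{2k+1}x),
\]
and symmetrically on $Y\times X$; in particular $r^2$ acts on the mixed part as the diagonal map $\sigma^2\times\tau^2$. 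Reading off the actions, the cross left/right actions are \emph{constant}: ${}^xy=\tau(y)$, $x^y=\sigma(x)$ for $x\in X,y\in Y$, and ${}^yx=\sigma(x)$, $y^x=\tau(y)$, each independent of the acting letter. These formulas are the engine for all five parts.

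For (\ref{irregularext1}) I would observe that for $a\in X$ the map $\Lcal_a$ preserves the splitting $Z=X\sqcup Y$, acting as the $r_X$-action $\Lcal^X_a$ on $X$ and as $\tau$ on $Y$ (and dually for $\Rcal_a$, and for $a\in Y$); since $\sigma,\tau$ are bijections, $\Lcal_a,\Rcal_a$ are bijective iff the corresponding $r_X$- or $r_Y$-actions are, giving the claimed equivalence. For (\ref{irregularext2}) the homogeneous blocks force 2-cancellativity of $r_X$ and $r_Y$ (condition (i)), so the content is the mixed orbits: by the closed form, an $r$-orbit of $(x,y)$ collides, i.e. has two distinct members sharing a first (resp. last) coordinate, exactly when some $r^{2k}$ fixes one coordinate while moving the other, which is controlled by the cycle lengths of $x$ under $\sigma$ and of $y$ under $\tau$. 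Avoiding all such collisions for every pair $(x,y)$ is what pins the cycle structure down to a common value $q$; the necessity direction, manufacturing an explicit collision whenever the two cycle lengths are incompatible, is the delicate combinatorial step here. Part (\ref{irregularext3}) is then pure arithmetic: the orbit of a mixed pair closes at the least \emph{even} common multiple of the two equal cycle lengths $q$, namely $q$ if $q$ is even and $2q$ if $q$ is odd, and $|r|$ is the least common multiple of $|r_X|,\,|r_Y|$ and this mixed length. For (\ref{irregularext4}) the identity $r^2=\sigma^2\times\tau^2$ on the mixed part shows $r$ restricted there is an involution iff $\sigma^2=\tau^2=\id$, which is precisely regularity of the extension; adding $r_X^2=r_Y^2=\id$ (involutivity of the factors) upgrades this to $r^2=\id_{Z\times Z}$.

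The heart of the theorem, and the main obstacle, is (\ref{irregularext5}). Here I would verify the braid relation on $Z^3$ through conditions \textbf{l1}, \textbf{r1}, \textbf{lr3} of Remark \ref{ybe}, splitting the triples $(a_1,a_2,a_3)$ into the $2^3=8$ types according to membership in $X$ or $Y$. The homogeneous types $XXX$ and $YYY$ reduce verbatim to the YBE for $r_X$ and $r_Y$ and hold by hypothesis. Tracking the block of each intermediate letter, which is where the bookkeeping is genuinely delicate because the constant cross actions repeatedly send one back into the opposite block, the four types with a repeated-block pair ($XXY$, $XYY$, $YXX$, $YYX$) collapse, through these identities, precisely to the statements that $\sigma$ preserves both $r_X$-actions and $\tau$ preserves both $r_Y$-actions, that is $\sigma\in\Aut(X,r_X)$ and $\tau\in\Aut(Y,r_Y)$ (condition (a)), while \textbf{lr3} is satisfied automatically in these cases. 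The two alternating types $XYX$ and $YXY$ are the subtle ones: after substituting the automorphism relations just obtained, the raw identities simplify to $\Lcal_{\sigma^2x}=\Lcal_x$, $\Rcal_{\sigma^2x}=\Rcal_x$ and their $\tau$-analogues, which is exactly condition (b), equation (\ref{eqAut01}). Conversely, assuming (a) and (b) one checks that every one of the eight types is satisfied, so $(Z,r)$ obeys the YBE; identifying the resulting data with Definition \ref{STUgendef} then exhibits $(Z,r)=(X,r_X)\stu^{\ast}(Y,r_Y)$ as a generalized strong twisted union.
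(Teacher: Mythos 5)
Your proof of part (5) --- the only part the paper actually proves (parts (1)--(4) are dismissed there with ``easy, and we leave their proof to the reader'') --- is essentially the paper's own argument: the paper runs the Yang--Baxter diagrams on the triples $\alpha y z\in Y\times X\times X$ and $x\alpha\beta\in X\times Y\times Y$ to extract $\sigma\in\Aut(X,r_X)$ and $\tau\in\Aut(Y,r_Y)$, then on the alternating triple $x\alpha y\in X\times Y\times X$ to extract $\sigma({}^{\sigma(x)}y)={}^x{(\sigma(y))}$ and $\sigma(x^{\sigma(y)})=\sigma(x)^y$, which after substituting the automorphism property become $\Lcal_{\sigma^2(x)}=\Lcal_x$ and $\Rcal_{\sigma^2(y)}=\Rcal_y$; the remaining block types are declared analogous and the converse is read off from the fact that each step is an equivalence. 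That is exactly your eight-type decomposition, with the repeated-block types yielding condition (a) and the alternating types yielding condition (b). Your closed forms $r^{2k}(x,y)=(\sigma^{2k}x,\tau^{2k}y)$ and $r^{2k+1}(x,y)=(\tau^{2k+1}y,\sigma^{2k+1}x)$ are correct, and your sketches of (1), (3), (4) are sound and go beyond what the paper supplies.

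The genuine gap is precisely the step you deferred in part (2): ``manufacturing an explicit collision whenever the two cycle lengths are incompatible.'' This cannot be completed, because the ``only if'' half of (2)(ii) fails as stated. By your closed forms, a mixed pair can violate 2-cancellativity only through an even power $r^{2j}$, and a violation occurs exactly when $\sigma^{2j}$ (resp.\ $\tau^{2j}$) has a fixed point while $\tau^{2j}$ (resp.\ $\sigma^{2j}$) is not the identity. Now take $r_X,r_Y$ to be trivial solutions, let $\sigma$ be the product of a disjoint $3$-cycle and $6$-cycle on a set $X$ with $|X|=9$, and let $\tau$ be a $3$-cycle on $Y$ with $|Y|=3$. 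For even $k$ and odd $q$ one has $q\mid k\Leftrightarrow 2q\mid k$, so $\sigma^{2j}$ has a fixed point iff $3\mid j$ iff $\sigma^{2j}=\id$ iff $\tau^{2j}=\id$; hence no collision ever occurs, $(Z,r)$ is 2-cancellative, and yet $\sigma$ has cycles of two different lengths and $|\sigma|=6\neq 3=|\tau|$. The correct characterization is the dichotomy ``for every even power, a fixed point of either $\sigma$- or $\tau$-power forces both powers to be the identity,'' which is implied by, but strictly weaker than, equal cycle lengths. So your instinct that this direction is the delicate point was accurate --- it is in fact an error in the statement, not merely a hard step --- and the paper's proof offers no help, since it covers only part (5). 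The sufficiency direction of (2), and part (3), which uses only that direction, are unaffected.
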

\begin{proof}
Parts (1), (2), (3) and (4) are easy, and we leave their proof to the reader.
We shall prove part (5).
Assume $(Z,r)$ obeys YBE.
We shall prove condition (a) and (b).
Consider the diagram
(\ref{ybediagramAut}), where $\alpha \in Y, y, z \in X$. This diagram contains elements of the orbit of
the monomial $\alpha yz \in Z^3$, under the action of the group
$\Dcal_3(r).$ All monomials occurring in this orbit are equal
elements of $S$
\begin{equation}
\label{ybediagramAut}
\begin{CD}
 \alpha yz @>r_{12}>> r(\alpha y)z=({{}^{\alpha}y}{\alpha}^y)z=\sigma(y)\tau(\alpha)z\\
@V  r_{23} VV @VV r_{23} V\\
  \alpha r(yz)=\alpha({{}^yz}y^z)@. (\sigma(y))r(\tau(\alpha)z)=(\sigma(y))(\sigma(z))\tau^2(\alpha)\\
@V r_{12} VV @VV r_{12} V\\
\kern -20pt  r(\alpha({{}^yz}))y^z=\sigma({}^yz)\tau(\alpha)y^z@.
\kern-80pt  \quad\quad\quad\quad \quad \quad \quad  ({}^{\sigma(y)}{\sigma(z)})( \sigma(y)^{\sigma(z)})\tau^2(\alpha)\\
  @V r_{23} VV  \\
 \sigma({}^yz)\sigma(y^z)\tau^2(\alpha)@.
 \end{CD}
\end{equation}
Therefore,
\begin{equation}
\label{eqAut}
\begin{array}{ll}
r_{12}r_{23}r_{12}(\alpha yz) &= r_{23}r_{12}r_{23}(\alpha yz), \quad \forall\; \alpha\; \in Y, \;y, z \in X
\\
&\Longleftrightarrow (\sigma \times \sigma)\circ r_X (yz) =r_X\circ (\sigma \times \sigma)(yz),\; \forall y,z \in X \\
&\Longleftrightarrow \sigma \in \Aut(X, r_X).
\end{array}
\end{equation}

Similarly, a diagram starting with an arbitrary monomial of the shape $x\alpha\beta$, where $x \in X$, $\alpha,\beta \in Y$
shows that
\begin{equation}
\label{eqAut2}
\begin{array}{ll}
r_{12}r_{23}r_{12}(x\alpha\beta) &= r_{23}r_{12}r_{23}(x\alpha\beta)\quad \forall\; x \in X, \alpha,\beta \in Y\\
                                 &\Longleftrightarrow (\tau \times \tau)\circ r_Y (\alpha\beta)
 =r_Y\circ (\tau \times \tau)(\alpha\beta), \; \forall   \alpha,\beta \in Y \\
 &\Longleftrightarrow \tau \in \Aut(Y, r_Y).
\end{array}
\end{equation}

We have proven (a). Next we shall prove  (\ref{eqAut01}). Consider the following diagram:
\begin{equation}
\label{ybediagramAut2}
\begin{CD}
 x\alpha y @>r_{12}>>  \tau (\alpha)\sigma(x)y      \\
@V  r_{23} VV @VV r_{23} V\\
 x \sigma(y)\tau( \alpha) @.    \tau (\alpha)({}^{\sigma(x)}y) (\sigma(x)^y) \\
@V r_{12} VV @VV r_{12} V\\
\kern -20pt {}^x {\sigma(y)} x^{\sigma(y)} \tau( \alpha) @.
\kern-80pt \quad\quad\quad\quad \quad\quad\quad  \sigma(({}^{\sigma(x)}y)) \tau^2 (\alpha)  (\sigma(x)^y)  \\
  @V r_{23} VV  \\
 {}^x {\sigma(y)}\tau^2( \alpha) \sigma(x^{\sigma(y)})  @.
 \end{CD}
\end{equation}

The following implication holds:
\begin{equation}
\label{eqAut3}
\begin{array}{ll}
&r_{12}r_{23}r_{12}(x\alpha y) = r_{23}r_{12}r_{23}(x\alpha y),\quad \forall \; x, y\in X, \alpha \in Y\\
&\Longleftrightarrow
\sigma(({}^{\sigma(x)}y)) = {}^x {(\sigma(y))} \;\text{and}\; \sigma(x^{(\sigma(y))}) = \sigma(x)^y,\;  \forall\; x,y \in X.
\end{array}
\end{equation}

But $\sigma\in \Aut(X, r_X),$ so it follows from  (\ref{eqAut3}) that
\[
 {}^x {(\sigma(y))} = \sigma(({}^{\sigma(x)}y))= {}^{\sigma(\sigma(x))}{\sigma(y)} = {}^{(\sigma^2(x))}{(\sigma(y))}, \forall x,y \in X.
\]
The map $\sigma: X \longrightarrow X$ is bijective, hence
\[
{}^{(\sigma^2(x))} z = {}^xz, \; \forall x,z \in X,
\]
which is equivalent to
\[\Lcal_{\sigma^2(x)} = \Lcal_{x} \;\;\text{holds in} \;\; (X, r_X), \;\forall x \in X. \]
Similarly, the equalities  \[(\sigma(x))^{(\sigma^2(y))} = \sigma(x^{\sigma(y)}) = (\sigma(x))^y, \forall x,y \in X\]
are equivalent to
\[\Rcal_{\sigma^2(y)} = \Rcal_{y} \;\;\text{holds in} \;\; (X,r_X), \;\forall y \in X. \]
This proves the first two equalities in (\ref{eqAut01}).
Analogous argument proves the remaining  equalities in (\ref{eqAut01}).
We have shown that if $(Z, r)$ obeys YBE then conditions (a) and (b) hold.
Conversely, assume that conditions (a) and (b) are satisfied. The above discussion implies easily that $(Z,r)$ is a solution of YBE. This proves part (5).

It is clear that our construction gives a particular case of a generalized strong twisted union
$(Z,r) = (X,r_X)\stu^{\ast} (Y,r_Y)$, see Definition \ref{STUgendef}.
\end{proof}

One  may apply the results of Theorem \ref{stuthm} and get more information on the braided monoid $S(Z,r)$ and the braided group $G(Z,r)$.

To construct  concrete extensions via automorphisms, and also for some kind of classification of this type of extensions
it may be practical to use results from \cite{Chouraqui}.

Clearly, if $(X,r)$ is a trivial solution, then $\Aut (X,r) = \Sym(X)$ and for every $\sigma \in \Sym (X)$ there is an equality
$\Lcal_{\sigma^2(x)} = \Lcal_{x} = \id_X.$ Hence we have at disposal
an easy method to construct nondegenerate 2-cancellative square-free braided sets $(Z,r)$,
$Z = X\bigcup Y$ , where the order of the map $r$ may vary
as $2 \leq |r| \leq |Z|$.

\begin{corollary}
\label{irregularextcor}
Let $(X,r_X),$ $(Y,r_Y)$ be disjoint trivial symmetric sets, $|X|= m, |Y|= n$, say $m \leq n$.  Let $Z=X\bigcup Y.$
Suppose $\sigma \in \Sym (X)$, $\tau \in \Sym (Y).$
 Define $r: Z\times Z
\longrightarrow Z\times Z$, as follows
 \[
 \begin{array}{l}
   r(x_1,x_2):= r_X(x_1,x_2)=(x_2,x_1),\; \forall
x_1,x_2\in X,\quad
 r(y_1,y_2):= r_Y(y_1,y_2)=(y_2,y_1),\; \forall y_1,y_2\in Y, \\
 r(x,y):=(\tau(y),\sigma(x)),\quad  r(y,x):=(\sigma(x),\tau(y)); \;\forall \; x\in X,\; y \in
Y.
\end{array}
\]
\begin{enumerate}
\item $(Z,r)$ is a nondegenerate square-free braided set.
\item
Moreover, $(Z,r)$ is 2-cancellative \emph{iff} the permutations $\sigma$ and  $\tau$ are products of disjoint cycles of the same length
$q \leq m$, in particular,
$|\sigma|= |\tau| = q$; In this case either (a) $q$ is even, and $|r|= q$; or (b)  $q$ is odd  and $|r|= 2q$.
\end{enumerate}
\end{corollary}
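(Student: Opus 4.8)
The plan is to obtain Corollary \ref{irregularextcor} as a direct specialization of Theorem \ref{theor:irregularext} to the case in which the two ingredient solutions are trivial. First I would record the properties of a trivial symmetric set $(X,r_X)$, $r_X(x_1,x_2)=(x_2,x_1)$: it is non-degenerate and square-free with $\Lcal_x=\Rcal_x=\id_X$ for every $x$, it is involutive (so $|r_X|=2$ whenever $|X|\geq 2$), and by Remark \ref{rem:nondeg_invol_implies2-cancel} it is $2$-cancellative; the same holds for $(Y,r_Y)$. The structural observation that drives everything is that for a trivial solution \emph{every} permutation is an automorphism, i.e. $\Aut(X,r_X)=\Sym(X)$: for any $\tau\in\Sym(X)$ both $(\tau\times\tau)\circ r_X$ and $r_X\circ(\tau\times\tau)$ send $(a,b)\mapsto(\tau(b),\tau(a))$.

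For part (1) I would apply Theorem \ref{theor:irregularext}. Non-degeneracy of $(Z,r)$ is immediate from part (\ref{irregularext1}) of that theorem, since both ingredients are non-degenerate. Square-freeness I would check directly: a diagonal pair $(z,z)$ has both entries in the same part ($X$ or $Y$), so $r(z,z)=r_X(z,z)=(z,z)$ or $r(z,z)=r_Y(z,z)=(z,z)$, the cross-rules never applying to $(z,z)$. For the braid relation I would invoke part (\ref{irregularext5}): condition (a) there holds automatically because $\Aut(X,r_X)=\Sym(X)$ and $\Aut(Y,r_Y)=\Sym(Y)$, and condition (b), i.e. the equalities (\ref{eqAut01}), holds trivially since all left and right actions are the identity, whence $\Lcal_{\sigma^2(x)}=\id=\Lcal_x$ and $\Rcal_{\sigma^2(x)}=\id=\Rcal_x$ (and likewise on $Y$). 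Hence $(Z,r)$ obeys the YBE and is a non-degenerate square-free braided set. (The hypothesis $\sigma,\tau\neq 1$ of Theorem \ref{theor:irregularext} is not needed for these three checks; when $\sigma=\tau=1$ the map $r$ is simply the trivial symmetric set on $Z$, which is evidently non-degenerate, square-free and braided.)

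For part (2), the $2$-cancellativity criterion is exactly part (\ref{irregularext2}) of Theorem \ref{theor:irregularext}: since both ingredients are already $2$-cancellative, $(Z,r)$ is $2$-cancellative iff $\sigma$ and $\tau$ are products of disjoint cycles of one common length $q$, forcing $|\sigma|=|\tau|=q$. As all cycles of $\sigma\in\Sym(X)$ then have length $q$, we get $q\mid m$ and hence $q\leq m\leq n$. For the order of $r$ I would feed the equalities $|r_X|=|r_Y|=2$ into part (\ref{irregularext3}): when $q$ is even, $|r|=\textbf{LCM}(2,2,q)=q$; when $q$ is odd, $|r|=\textbf{LCM}(2,2,2q)=2q$.

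The computation is almost entirely bookkeeping; the only point needing care — and the one I would flag as the main (if minor) obstacle — is the verification that the action conditions (\ref{eqAut01}) are vacuous here, which rests precisely on a trivial solution having identity left/right actions and full automorphism group. Once that is in place, the corollary is a clean readout of Theorem \ref{theor:irregularext}.
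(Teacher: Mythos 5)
Your proposal is correct and follows essentially the same route as the paper: the paper gives no separate proof of Corollary \ref{irregularextcor}, deriving it from Theorem \ref{theor:irregularext} via precisely your key observation that a trivial solution has $\Aut(X,r_X)=\Sym(X)$ and identity left/right actions, so that conditions (a) and (b) of part (\ref{irregularext5}) hold vacuously, while parts (\ref{irregularext1})--(\ref{irregularext3}) supply non-degeneracy, the 2-cancellativity criterion, and the order formula $|r|=q$ or $2q$. Your extra checks (direct verification of square-freeness and the degenerate case $\sigma=\tau=1$, which falls outside the theorem's hypothesis $\sigma\neq 1$, $\tau\neq 1$) are sound and, if anything, slightly more careful than the paper's own treatment.
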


\begin{example}
\label{irregularextcor}
Let $X = \{x_1, x_2, x_3\}$, $Y = \{y_1, y_2, y_3\}$, be disjoint sets and let $(X,r_X),$ $(Y,r_Y)$ be trivial solutions.
Set  $\sigma = (x_1\;x_2\;x_3) \in \Sym (X)$, $\tau = (y_1\;y_2\;y_3) \in \Sym (Y).$
 Define $r: Z\times Z
\longrightarrow Z\times Z$, as follows
 \[
 \begin{array}{ll}
 r(y,x)=(\sigma(x),\tau(y)), & r(x,y)=(\tau(y),\sigma(x)),\;  \forall \; x\in X,\; y \in
Y;\\
   r(x_i,x_j)= (x_j, x_i),
 & r(y_i,y_j)= (y_j, y_i),\;
1\leq i, j\leq 3.
\end{array}
\]
Then
 $(Z,r)$ is a nondegenerate square-free braided set of order $|Z|=6$, $Z = X\stu^{\ast}Y$.
$(Z,r)$ is 2-cancellative,  the order of $r$ is $|r|= 6= |Z|.$ The algebra $A= A(\textbf{k}, Z, r)$ satisfies $\dim A_2 = 2\binom{3+1}{2}+3 = 15$.
More detailed computation shows that the associated graded algebra $A$ does not have a finite Gr\"{o}bner basis with respect to any ordering
of $Z$.
\end{example}

\section{The braided monoid $S(X,r)$ and  extensions of solutions}
\label{BraidedMonoidSec}
\subsection{The braided monoid $S(X,r)$ of a braided set $(X,r)$}
\label{BraidedMonoid1}
In \cite{GIM08} we introduced the notion of \emph{a braided monoid} analogously to the term `braided group' in the
 sense of  \cite{Takeuchi}, \cite{LYZ}.
We recall some definitions and results from \cite{GIM08}.

To each braided set $(X,r)$ with $S=S(X,r)$  we  associate a
matched pair $(S,S)$  with left and right actions
uniquely determined by $r,$
which defines a unique `braided monoid'
$(S, r_S)$ associated to $(X,r).$
This is not a surprise given the analogous results for
 the group $G(X,r)$, see \cite{LYZ}, but our approach is necessarily  different. In fact we first construct
the matched pair of monoids which is a
self-contained result and then consider the map $r_S: S\times S \longrightarrow S\times S $, see \cite[Theorem 3.6]{GIM08}. We prove, see
\cite[Theorem 3.14]{GIM08}, that $r_S$ is bijective and obeys the
YBE (as would be true in the group case), moreover, we show that
$(S, r_S)$ is a graded braided monoid.

The reader should  be aware that due to the possible
lack of cancellation in $S$ the proofs of our results for monoids   are
difficult and necessarily involve different computations and combinatorial
arguments. In general, the results can not be extracted from the
already known results for the group case. Nevertheless, the monoid case is the one naturally arising in this context. Both the monoid  $S(X,r)$ and the
quadratic algebra  $A = A(\textbf{k},X,r)$ over a field $\textbf{k}$ are of particular interest.
The theory of general braided monoids $(S, r_S)$ gives interesting classes of braided objects. However it seems that the approach to these is different and  more difficult from the approach to braided groups (equivalently skew braces).
We recall some basic definitions.

\begin{definition} \label{MLaxioms} \cite{GIM08}
The pair $(S,T)$ is a matched pair of monoids if $T$
 acts from the left on $S$ by ${}^{(\ )}\bullet$ and $S$ acts on $T$
 from the right by $\bullet^{(\ )}$ and these two actions obey
\[\begin{array}{lclc}
{\bf ML0}:\quad & {}^a1=1,\quad  {}^1u=u;\quad &{\bf MR0:} \quad &1^u=1,\quad a^1=a \\
 {\rm\bf ML1:}\quad& {}^{(ab)}u={}^a{({}^bu)},\quad& {\rm\bf MR1:}\quad  & a^{(uv)}=(a^u)^v \\
{\rm\bf ML2:}\quad & {}^a{(u.v)}=({}^au)({}^{a^u}v),\quad &{\rm\bf
MR2:}\quad & (a.b)^u=(a^{{}^bu})(b^u),
\end{array}\]
for all $a, b\in T, u, v \in S$.
\end{definition}

 \begin{definition}
 \label{braidedmonoiddef}  \cite{GIM08}
 An \emph{{\bf M3}-monoid} is a monoid $S$ forming part of a matched pair $(S,S)$
 for which the actions are such that
 \[ {\rm\bf M3}:\quad {}^uvu^v=uv\]
holds in $S$ for all $u,v\in S$. We define the \emph{associated map}   $r_S: S\times S\to S\times S$  by  $r_S(u,v)= ({}^uv,u^v).$
 A \emph{braided monoid}  is an {\bf M3}-monoid $S$,  where
 $r_S$ is bijective and obeys the YBE.
 \end{definition}

\begin{fact}(\cite{GIM08}, Theor. 3.6, Theor. 3.14.)
\label{theoremA}
Let  $(X,r)$ be a braided set and
$S=S(X,r)$ the associated monoid.
Then
\begin{enumerate}
\item
The left and the
right actions
$
{}^{(\;\;)}{\bullet}: X\times X  \longrightarrow
 X  , \; \text{and}\;\; \bullet^{(\;\;)}: X
\times X \longrightarrow  X
$
defined via $r$ can be extended in a unique way to a left and a
right action
\[{}^{(\;\;)}{\bullet}: S\times S  \longrightarrow
 S  , \; \text{and}\;\; \bullet^{(\;\;)}: S
\times S \longrightarrow  S.
\]
which make $S$ a strong graded {\bf M3}-monoid, in particular, $(S, r_S)$ is a set-theoretic solution of YBE. The associated
bijective map $r_S$ restricts to $r$.
\item Moreover, the following conditions hold.
\begin{enumerate}
\item
\label{theoremAextra1}
$(S,r_S)$ is a graded braided
monoid, that is the actions agree with the grading of $S$:
$|{}^au|= |u|= |u^a|,$ for all $a,u \in S$.
 \item
 \label{theoremAextranondeg}
$(S, r_S)$ is a nondegenerate solution of YBE \emph{iff} $(X,r)$ is nondegenerate.
\item
\label{theoremAextrainvol}
$(S, r_S)$ is involutive \emph{iff}
$(X,r)$ is involutive.
\end{enumerate}
\end{enumerate}
\end{fact}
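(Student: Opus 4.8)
The plan is to build the two actions on $S=S(X,r)$ by first extending them to the free monoid $\asX$ and then showing that they descend modulo the defining relations $\Re(r)$. On generators the actions are prescribed by $r$, namely $\Lcal_x(y)={}^xy$ and $\Rcal_y(x)=x^y$, and the matched-pair axioms themselves dictate the only possible extension: \textbf{ML1} forces ${}^{(x_1\cdots x_m)}v = {}^{x_1}({}^{x_2}(\cdots {}^{x_m}v))$, \textbf{MR1} forces $a^{(y_1\cdots y_n)} = ((a^{y_1})^{y_2}\cdots)^{y_n}$, and \textbf{ML2}, \textbf{MR2} then determine the action on products in the acted-upon argument. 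I would therefore first define ${}^{(\ )}\bullet$ and $\bullet^{(\ )}$ on $\asX\times\asX$ by these recursions; uniqueness is automatic, since any extension obeying \textbf{ML1--ML2}, \textbf{MR1--MR2} must coincide with them. The next task is to check that the matched-pair axioms \textbf{ML0--ML2}, \textbf{MR0--MR2} genuinely hold on $\asX$; most follow by a routine induction on word length, but the mixed compatibility between the two actions ultimately rests on the braided-set conditions \textbf{l1}, \textbf{r1} and especially \textbf{lr3} of Remark \ref{ybe}.

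The main obstacle is to show that the free-monoid actions are compatible with the defining relations, so that they descend to well-defined maps on $S$. Concretely, whenever $w=w'$ holds in $S$ one must verify ${}^aw={}^aw'$ and $a^w=a^{w'}$, and symmetrically for the left argument, for every $a\in\asX$. By Remark \ref{orbitsinG}, two words are equal in $S$ precisely when they lie in the same $\Dcal_m(r)$-orbit in $X^m$, i.e.\ differ by a finite sequence of elementary $r$-moves $r^{ii+1}$; hence it suffices to treat a single move applied inside a word. The cleanest way I would argue this is diagrammatically: represent a word as a bundle of strands and the braiding $r$ as a crossing, so that transporting a block $a$ past $w$ is a sequence of elementary crossings, and pushing this block past an $r$-move becomes exactly an instance of the braid relation, which holds because $(X,r)$ is a braided set. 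Equivalently, one invokes the $B_n$-action on $X^n$ recorded after Remark \ref{dimA2rem}, whose defining relations (\ref{braidgrouprel}) are guaranteed precisely by the YBE for $r$. This is the delicate point because $S$ need not be cancellative, so one cannot import the computation from the group case and must track the orbit combinatorics directly.

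Once the actions are defined on $S$, the remaining assertions follow with less effort. Property \textbf{M3}, ${}^uv\,u^v=uv$, holds on generators by the very definition of $\Re(r)$, since $r(x,y)=({}^xy,x^y)$ yields $xy={}^xy\,x^y$ in $S$, and extends to arbitrary $u,v$ by induction using \textbf{ML2} and \textbf{MR2}; this makes $(S,S)$ a strong graded \textbf{M3}-monoid in the sense of Definition \ref{braidedmonoiddef}, with $r_S(u,v)=({}^uv,u^v)$ restricting to $r$ on $X\times X$. Bijectivity of $r_S$ follows because it is assembled from the invertible crossings $r$, its inverse being assembled from $r^{-1}$, which completes part (1). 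For part (2): $r_S$ obeys the YBE, upgrading the \textbf{M3}-monoid to a graded braided monoid, by the same braid-relation argument used for descent, now applied one level up; the grading is respected because $r_S$ sends $X^n\times X^m$ to $X^m\times X^n$ and is length-preserving; non-degeneracy of $r_S$ reduces to that of $r$ since, by \textbf{ML1} and \textbf{MR1}, the action of a word factors as a composition of the bijective actions of its letters; and $r_S^2=\id$ iff $r^2=\id$, again by reducing the block computation to crossings of single generators. As in the descent step, the subtlety throughout is that all of these reductions must be justified without cancellation in $S$, so that the orbit-theoretic and diagrammatic bookkeeping, rather than group-theoretic inverses, carries the argument.
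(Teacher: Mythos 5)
Your proposal is essentially correct, but note that the paper itself does not prove this statement: it is stated as a \emph{Fact}, imported from \cite{GIM08} (Theorems 3.6 and 3.14), so the honest comparison is with that source. There, as in your first step, the extension of the actions is forced by \textbf{ML1}, \textbf{MR1}, \textbf{ML2}, \textbf{MR2} and built recursively on word length; but both the crucial descent modulo $\Re(r)$ and the YBE for $r_S$ are established by direct algebraic induction, the key inputs being the invariance conditions \textbf{l2} and \textbf{r2} of Definition \ref{extendedleftaction} (equivalent to \textbf{l1}, \textbf{r1}, \textbf{lr3} by Remark \ref{leml1r2}), after which the braidedness of $(S,r_S)$ is deduced from the matched-pair and \textbf{M3} identities. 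You replace this bookkeeping by the braid-group mechanism: since $(X,r)$ is braided, $B_n$ acts on $X^n$ via $b_i\mapsto r^{ii+1}$, and pushing a block past an elementary $r$-move is the conjugation identity for block-transposition braids, so descent, \textbf{M3}, bijectivity of $r_S$ and the YBE for $r_S$ all become braid identities read off through this action. That is genuinely a different route: yours is more conceptual and treats all the orbit bookkeeping uniformly (exactly the point where cancellation in $S$ cannot be used), while the inductive route of \cite{GIM08} is more elementary, self-contained, and produces the explicit \textbf{ML}/\textbf{MR} computations that the rest of the paper repeatedly invokes. Two details you should still write out to make your sketch complete: for part (2b), \textbf{ML1}/\textbf{MR1} only reduce non-degeneracy to a single letter $x$ acting on $S_m$, and bijectivity of that action needs the triangular inversion argument using non-degeneracy of $r$, together with the check that the inverse map again sends $\Dcal_m(r)$-orbits to orbits; and bijectivity of $r_S$ requires running your descent argument also for the inverse block map assembled from $r^{-1}$, which is legitimate because $(X,r^{-1})$ is again a braided set.
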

Suppose $(X,r)$ is a noninvolutive solution. The set $X$ is always embedded in the braided monoid $(S, r_S)$. Moreover, in contrast with the group $G(X,r)$, the monoid $S$ preserves more detailed information about the solution $(X,r)$. In particular, there is an equality $u=v$ in $S$, if and only if $|u|= |v|= m$, and $u$ and $v$ are in the same $\Dcal_m(r)$-orbit in $X^m$. In general, this is not true in $G(X,r),$ where a great portion of information about $(X,r)$ is lost.

\begin{corollary}
\label{cor_SDmonoids}
Suppose $(X,r)$ is a self distributive braided set, $S=S(X,r)$, $G=G(X,r)$. Then
\begin{enumerate}
\item[(i)]
the braided monoid $(S, r_S)$ is a self distributive solution;
\item[(ii)]
the  braided group $(G, r_G)$ is self distributive.
\end{enumerate}
\end{corollary}

\subsection{General extensions of braided sets}
\begin{definition}
\label{extensiongeneraldef} Let $(X,r_X)$ and
$(Y,r_Y)$ be disjoint quadratic sets. Let $(Z,r)$ be a set with a
bijection $r: Z\times Z\longrightarrow Z\times Z.$ We say that
$(Z,r)$ is \emph{a (general)  extension of} $(X,r_X),(Y,r_Y),$ if
$Z= X\bigcup Y$ as sets, and  $r$ extends the maps $r_X$ and
$r_Y,$ i.e. $r_{\mid X^2}= r_X $, and $r_{\mid Y^2}=r_Y.$ Clearly
in this case $X, Y$ are $r$-invariant subsets of $Z$. $(Z,r)$ is
\emph{a YB-extension of} $(X,r_X)$, and $(Y,r_Y)$ if $r$ obeys YBE.
\end{definition}

\begin{remark}
\label{extensionsrem} In the assumption of the above definition,
suppose $(Z,r)$ is \emph{a nondegenerate}  extension of
$(X,r_X),(Y,r_Y).$ Then the equalities $r(x,y) = ({}^xy,x^y),$
$r(y,x) = ({}^yx,y^x),$ and the nondegeneracy of $r$, $r_X,$
$r_Y$ imply that
\[
{}^yx, x^y \in X, \;\;\text{and }\;\; {}^xy, y^x \in Y,\;\;
\text{for all}\;\; x \in X, y\in Y.
\]
Therefore, $r$ induces bijective maps
\begin{equation}
\label{rhosigma} \rho: Y\times X \longrightarrow X\times Y ,  \;
\text{and} \;\sigma: X\times Y \longrightarrow Y\times X,
\end{equation}
and left and right ``actions"
\begin{equation}
\label{ractions1}
{}^{Y}{\bullet}: Y\times X \longrightarrow
X,\;\;\; {\bullet}^{X}: Y\times X \longrightarrow Y,\;
\text{projected from}\; \rho
\end{equation}
\begin{equation}
\label{ractions2} {}^{X}{\bullet}: X\times Y \longrightarrow
Y,\quad  {\bullet}^{Y}: X\times Y \longrightarrow X, \
\text{projected\ from}\; \sigma.
\end{equation}
Clearly,  the 4-tuple of maps $(r_X, r_Y, \rho, \sigma)$ uniquely
determine the extension $r.$ The map $r$ is also uniquely
determined by $r_X$, $r_Y$, and the maps (\ref{ractions1}),
(\ref{ractions2}).
\end{remark}
We call the actions (\ref{ractions1}) and
(\ref{ractions2})  projected from $r_{|Y\times X}$ and $r_{|X\times Y}$ \emph{the associated ground actions}.

\begin{lemma}
\label{extlemma}
Suppose  $(Z,r)$ is a  nondegenerate braided set
which splits
 as a disjoint union $Z = X\bigcup Y$ of two $r$-invariant subsets $X$ and $Y$. Denote
by $(X,r_1)$ and $(Y, r_2)$ the induced sub-solutions.  The
following conditions hold.
\begin{enumerate}
\item
\label{a)}
The assignment
$\alpha\longrightarrow {}^{\alpha}\bullet= \Lcal_{\alpha|X}$
extends to a left action of the associated monoid $S_Y$ on $X$, and induces a left action of $G_Y$ on $X$.
The assignment $\alpha
\longrightarrow   {\bullet}^{\alpha}= \Rcal_{\alpha|X}$
extends to a right action of  the associated monoid $S_Y$ on $X$, and induces a right action of $G_Y$ on $X$.
\item
The assignment $x
\longrightarrow {}^{x}\bullet= \Lcal_{x|Y}$
extends to a left action of  the associated monoid $S_X$ on $Y$, and induces a left action of $G_X$ on $Y$.
The assignment $x
\longrightarrow   {\bullet}^{x}=\Rcal_{x|Y}$
extends to a right action of the associated monoid $S_X$ on $Y$ and induces a right action of $S_X$ on $Y$.
\item
Moreover, if the braided set $(Z,r)$ is injective (that is the natural map $Z\longrightarrow G_Z$ is embedding) then each of the assignments in part (a) extends to an action
of $G_Y$ on $X$, and each of the assignments in part (b) extends to an action of $G_X$ on $Y$.
\end{enumerate}
\end{lemma}

Recall that in \cite{GIM08}  a (general)  extension $(Z,r)$ of $(X,r_X),(Y,r_Y)$
is called \emph{a regular extension} of $(X,r_X)$, and $(Y,r_Y)$ if $r$ is bijective, and the
restrictions $r_{\mid Y\times X}$ and $r_{\mid X\times Y}$ satisfy
\[(r\circ r)_{\mid Y\times X}= \id_{\mid Y\times X}, \quad (r\circ r)_{\mid
X\times Y}= \id_{\mid X\times Y},\] but $r$ is not necessarily
involutive on $X\times X,$ neither on $Y\times Y.$
Regular extensions of arbitrary braided sets were introduced and studied in \cite{GIM08}, where the theory of matched pairs of monoids
 was applied to characterize regular extensions and their monoids.
A regular extension $(Z,r)$ of two involutive solutions is also involutive.
The extensions constructed in Section \ref{sec:specialext} are not regular.

In this paper we have a
particular interest in \emph{noninvolutive} nondegenerate braided sets $(Z,r)$, and
it is natural to search for methods proposing constructions of "new" solutions using already known braided sets.
 We have shown in
Section \ref{sec:specialext}, see Theorem \ref{theor:irregularext}
that one can construct new noninvolutive solutions $(Z,r)$ with a
prescribed orders $|Z|$, and $|r|$ using general (non-regular) extensions of
well-known involutive solutions.
 So it is natural to study general extensions $(Z,r)$, possibly \emph{not regular} in the sense of \cite{GIM08}).
 In notation and assumptions as above, let $(Z,r)$ be a nondegenerate braided set which is an extension of the disjoint braided sets
 $(X,r_X), (Y,r_Y)$.
 Denote $S = S(X,r_X), T= S(Y,r_Y), U = S(Z,r)$.
 It follows from  Fact \ref{theoremA} that $U = S(Z,r)$ has the structure of a graded braided monoid $(U, r_U)$
 with a braiding operator $r_U$ extending $r$, moreover $(U, r_U)$ is an extension of the disjoint braided monoids $(S, r_S)$ and $(T, r_T)$, and one can apply the theory of matched pairs of monoids to give more detailed description of the behaviour of the matched pairs  (S,T), (T,S), $(U,U)$, etc,  in the spirit of the results in \cite{GIM08}).
We propose an explicit construction- \emph{generalized strong twisted unions of braided sets}.

\subsection{Generalized strong twisted unions of nondegenerate braided sets}
\label{subsec:StuBraidedsets}
Theorem \ref{theor:irregularext} gives a method to construct a new type of extensions of braided sets. The properties of these extensions motivate
our Definition \ref{STUgendef} of \emph{generalized strong twisted unions of solutions}
which is a generalization of the notion of a strong twisted union of solutions, see Definition 5.1. \cite{GIM08}. According the "old" definition, the notion of a strong twisted union is restricted only to \emph{regular extensions}.
Note that a strong twisted union $(Z,r)$ of solutions
$(X,r_X)$ and $(Y, r_Y)$ does not necessarily obey YBE, but if $(Z,r)= X\stu Y$ is (a regular) extension of symmetric sets and obeys the YBE,
then $(Z,r)$ is also a symmetric set ($r^2=1$).

In our new settings if $(X,r_X)$ and $(Y, r_Y)$ are symmetric (or
braided) sets with $|X| > 2, |Y|> 2$, we construct extensions $(Z,
r)$ which are braided sets (satisfy YBE),  but the solution $r$
may have order $>2$, see for example Section \ref{sec:specialext}
and the results therein.

\begin{definition}
\label{STUgendef} Suppose  $(X,r_X)$ and $(Y, r_Y)$ are disjoint
quadratic sets. We call an extension $(Z,r)$
 \emph{a generalized strong twisted union} of $(X,r_X)$ and $(Y, r_Y)$, and write $Z = X\stu^{\ast} Y$ if the
ground actions satisfy
 \begin{equation}
\label{stueq1}
 \begin{array}{lll}
 {\rm\bf stu 1 :} &{}^{{\alpha}^y}x  =  {}^{\alpha}x;
 \quad\quad
 {\rm\bf stu 2 :} & x^{{}^y{\alpha}} = x^{\alpha};\\
 &&\\
 {\rm\bf stu 3 :} &{}^{x^{\beta}}{\alpha}  =  {}^x{\alpha}; \quad\quad
 {\rm\bf stu 4 :} & {\alpha}^{{}^{\beta}x}  =  {\alpha}^x, \;
\end{array}
\end{equation}
for all   $x, y \in X, \; \alpha,\beta \in Y$.

 We define a generalized strong
 twisted union of more than two quadratic sets, analogously to
\cite{GIC}, Definition 3.5.
Let $(Z, r )$ be a nondegenerate quadratic set of
arbitrary cardinality, let $X_i$ , $i \in I$, be a set of
pairwise disjoint $r$-invariant proper subsets of $Z$, where $I$ is a set of indices, $|I|\geq 2$.
We say that $(Z, r )$ is \emph{a generalized strong twisted union of
$X_i, i \in I$}, and write $Z = \stu^{\ast}_{i\in I} X_i$ if
$Z = \bigcup_{i\in I} X_i,$
and for each pair $i, j\in I , i\neq j,$ the $r$-invariant subset $X_{i j} = X_i\bigcup X_j$ is a generalized
strong twisted union, $X_{i j}=X_i\stu^{\ast} X_j$. In the
particular case, when $I$ is a finite set, $I= \{1 \leq i \leq  m\}$,
we write $X = X_1 \stu^{\ast} X_2 \stu^{\ast}\cdots
\stu^{\ast}X_m$.
 \end{definition}

 \begin{lemma}
 \label{SD_stu_lemma}
 Suppose
 $(X,r)$ is an \textbf{SD} nondegenerate braided set i.e. $r(x,y)= ({}^xy, x), \forall x,y \in X$ (so $(X,\la)$ is a rack).
 If $(X, r)$ decomposes as a union of disjoint $r$-invariant subsets $X= \bigcup_{1 \leq i \leq m} X_i,$ then $X$ is a generalized strong twisted union of racks, $X = X_1 \stu^{\ast} X_2 \stu^{\ast}\cdots
\stu^{\ast}X_m$, where for $1 \leq i \leq m$, $(X_i, r_i)$ is the corresponding subsolution.
\end{lemma}

\begin{lemma}
\label{lemmixedactions}
  Let $(Z,r)$ be a nondegenerate quadratic set, which splits as a disjoint union $Z=X\bigcup Y$ of its $r$-invarint subsets $(X,r_X), (Y, r_Y)$, so $Z$ is an extension of $X$ and $Y$. Suppose $x,y \in X, \ \alpha \in Y.$ Each two of the following conditions imply the third.
\[
\begin{array}{lll}
(1)&  {\bf l1(\alpha, y, x)    :}\quad& {}^{\alpha}{({}^yx)}={}^{{}^{\alpha}y}{({}^{{\alpha}^y}{x})}   \\
&&\\
(2)&  {\bf laut(\alpha, y, x)  :}\quad& {}^{\alpha}{({}^yx)}={}^{{}^{\alpha}y}{({}^{\alpha}x)} \\
&&\\
(3)&  {\bf stu1 (\alpha, y, x) :}\quad&  {}^{{\alpha}^y}x  =  {}^{\alpha}x.
\end{array}
\]
\end{lemma}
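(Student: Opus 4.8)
The plan is to observe that the three identities are really three equalities among only three elements of $X$, after which the statement reduces to the triviality ``any two of $P=Q_1$, $P=Q_2$, $Q_1=Q_2$ force the third''. First I would record the types of the elements involved, using Remark \ref{extensionsrem}: since $\alpha \in Y$ and $x,y \in X$, we have ${}^{\alpha}y \in X$, $\alpha^y \in Y$, ${}^{\alpha^y}x \in X$, ${}^{\alpha}x \in X$, and ${}^yx \in X$. Thus, writing $P := {}^{\alpha}{({}^yx)}$, $Q_1 := {}^{{}^{\alpha}y}{({}^{{\alpha}^y}{x})}$ and $Q_2 := {}^{{}^{\alpha}y}{({}^{\alpha}x)}$, all three are elements of $X$; condition \textbf{l1} reads $P = Q_1$, condition \textbf{laut} reads $P = Q_2$, and \textbf{stu1} reads ${}^{\alpha^y}x = {}^{\alpha}x$.

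The key step is to rewrite $Q_1$ and $Q_2$ through a single map. Both have the form $\Lcal_{{}^{\alpha}y}(-)$, applied to the inner elements ${}^{\alpha^y}x$ and ${}^{\alpha}x$ respectively, where $\Lcal_{{}^{\alpha}y}$ denotes the left action of the element ${}^{\alpha}y \in X$. Here I would verify that $\Lcal_{{}^{\alpha}y}$ restricts to a bijection of $X$: since $X$ is $r$-invariant and $(Z,r)$ is non-degenerate, the induced sub-solution $(X,r_X)$ is itself non-degenerate — a bijection $\Lcal_z$ of $Z$ preserving the decomposition $Z=X\sqcup Y$ must restrict to a bijection of each part — so $\Lcal_{{}^{\alpha}y}|_X$ is injective. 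Consequently $Q_1 = Q_2$ if and only if ${}^{\alpha^y}x = {}^{\alpha}x$; that is, \textbf{stu1} is equivalent to the equality $Q_1 = Q_2$.

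With this equivalence in hand the lemma follows formally. We have three candidate equalities, $P=Q_1$ (\textbf{l1}), $P=Q_2$ (\textbf{laut}), and $Q_1=Q_2$ (equivalent to \textbf{stu1}); any two of them imply the third by transitivity of equality. Explicitly, \textbf{l1} and \textbf{laut} give $Q_1=P=Q_2$, hence \textbf{stu1}; \textbf{l1} and \textbf{stu1} give $P=Q_1=Q_2$, hence \textbf{laut}; and \textbf{laut} and \textbf{stu1} give $P=Q_2=Q_1$, hence \textbf{l1}.

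The only genuinely non-formal point is the injectivity of $\Lcal_{{}^{\alpha}y}$ on $X$, which I expect to be the main (though minor) obstacle: it rests on the fact that restricting a non-degenerate quadratic set to an $r$-invariant subset again yields a non-degenerate quadratic set, so that the inner arguments ${}^{\alpha^y}x$ and ${}^{\alpha}x$ may be cancelled. Everything else is the elementary three-term logic above.
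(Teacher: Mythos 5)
Your proposal is correct. The paper itself states this lemma without any proof (it is treated as evident and used immediately afterwards in Proposition \ref{stuprop}), and your argument is precisely the one the paper implicitly relies on: writing the three conditions as $P=Q_1$, $P=Q_2$, and ${}^{\alpha^y}x={}^{\alpha}x$, observing that $Q_1=\Lcal_{{}^{\alpha}y}({}^{\alpha^y}x)$ and $Q_2=\Lcal_{{}^{\alpha}y}({}^{\alpha}x)$, and using injectivity of $\Lcal_{{}^{\alpha}y}$ (this is exactly where the non-degeneracy hypothesis enters) to identify \textbf{stu1} with $Q_1=Q_2$, after which everything is transitivity. One small simplification: you do not need the full claim that $\Lcal_{{}^{\alpha}y}$ restricts to a bijection of $X$ — injectivity of $\Lcal_{{}^{\alpha}y}$ on all of $Z$, which is immediate from the non-degeneracy of $(Z,r)$, already suffices to cancel the inner arguments; the type-checking via Remark \ref{extensionsrem} is only needed to make sense of the expressions, not for the cancellation itself.
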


Let  $(Z,r)$ be a  nondegenerate braided set
which split as a disjoint union $Z = X\bigcup Y$ of two $r$-invariant subsets $X$ and $Y$, $G_Z=G(Z,r).$
 Denote by $(X,r_X)$ and $(Y, r_Y)$ the induced
 subsolutions. Due to the nondegeneracy of $r$, each of the sets $X$ and $Y$ is invariant under the left action of
 $G_Z$ on $Z$, similarly,
 $X$ and $Y$ are invariant under the right action of $G_Z$ on $Z$. (We call such sets  \emph{$G$-invriant}).
 Let $\alpha \in Y$, and let $\Lcal_{\alpha}$ be the corresponding left action on $Z$. Denote by
 $\Lcal_{\alpha|X}, \alpha \in Y$  the restriction of $\Lcal_{\alpha}$ on $X$.
 The restrictions $\Rcal_{\alpha|X}$, $\Lcal_{x|Y}, \Rcal_{x|Y}$ are defined analogously for $x \in X$, and $\alpha \in Y$.
\begin{proposition}
\label{stuprop}
Suppose  $(Z,r)$ is a  nondegenerate braided set
which splits as a disjoint union $Z = X\bigcup Y$ of two $r$-invariant subsets $X$ and $Y$, denote
by $(X,r_1)$ and $(Y, r_2)$ the induced subsolutions. The following conditions hold.
\begin{enumerate}
\item
 $\Lcal_{\alpha|X} \in \Aut (X, r_1)$ if and only if
\[ {}^{\alpha}{({}^yx)}={}^{{}^{\alpha}y}{({}^{\alpha}x)},\;\;\text{and}\;\;
{}^{\alpha}{(x^y)}= ({}^{\alpha}x)^{({}^{\alpha}y)},\;\forall x,y \in X.\]
\item
$\Rcal_{\alpha|X} \in \Aut (X, r_1)$ if and only if
\[{({}^yx)}^{\alpha}={}^{(y^{\alpha})}{(x^{\alpha})}, \;\;\text{and}\;\;
{(x^y)}^{\alpha}= (x^{\alpha})^{(y^{\alpha})}, \forall  x,y \in X.
\]
\item  The following implications hold
\begin{equation}
\label{stueq2}
 \begin{array}{lll}
 {\rm\bf stu 1 :} &{}^{{\alpha}^y}x  =  {}^{\alpha}x,\; \forall \alpha\in Y, x,y \in X \Longleftrightarrow & \Lcal_{\alpha|X} \in \Aut (X, r_1),  \forall \alpha\in Y\\
{\rm\bf stu 2 :} & x^{{}^y{\alpha}} = x^{\alpha}, \;\forall \alpha\in Y, x,y \in X \Longleftrightarrow & \Rcal_{\alpha|X} \in \Aut (X, r_1),  \forall \alpha\in Y\\
 {\rm\bf stu 3 :} &{}^{x^{\beta}}{\alpha}  =  {}^x{\alpha}, \; \forall x \in X, \alpha, \beta\in Y  \Longleftrightarrow & \Lcal_{x|Y} \in \Aut (Y, r_2),  \;\forall x\in X\\
 {\rm\bf stu 4 :} & {\alpha}^{{}^{\beta}x}  =  {\alpha}^x,\;\forall x \in X, \alpha, \beta\in Y  \Longleftrightarrow & \Rcal_{x|Y} \in \Aut (Y, r_2),  \;\forall x\in X.
\end{array}
\end{equation}
\end{enumerate}
\end{proposition}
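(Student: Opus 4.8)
The plan is to handle the three parts in order, using (1) and (2) as definitional input for (3), where the braid relations carry the weight.

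First I would note that, by Lemma~\ref{extlemma} and the non-degeneracy of $r$ (Remark~\ref{extensionsrem}), the restriction $\Lcal_{\alpha|X}\colon x\mapsto {}^{\alpha}x$ is a well-defined bijection of $X$, and similarly $\Rcal_{\alpha|X}\colon x\mapsto x^{\alpha}$; this is what makes it meaningful to ask whether they lie in $\Aut(X,r_1)$. Given this, Parts (1) and (2) are just the definition of an $r_1$-automorphism unwound: writing $r_1(x,y)=({}^xy,x^y)$ and substituting $\tau=\Lcal_{\alpha|X}$ into $(\tau\times\tau)\circ r_1=r_1\circ(\tau\times\tau)$ reproduces, componentwise, the two displayed identities of Part (1) (the first after the harmless relabelling $x\leftrightarrow y$), and substituting $\tau=\Rcal_{\alpha|X}$ reproduces those of Part (2). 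These two parts use only that $(X,r_1)$ is the induced subsolution and that the restrictions are bijections; the braiding of $Z$ is not needed yet.

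For Part (3) I would prove the first equivalence $\textbf{stu 1}\Leftrightarrow\Lcal_{\alpha|X}\in\Aut(X,r_1)$ fully and obtain the rest by symmetry. Since $(Z,r)$ is braided, the mixed relation $\textbf{l1}(\alpha,y,x)$ holds for all $\alpha\in Y$, $x,y\in X$ (Remark~\ref{ybe}); feeding this into Lemma~\ref{lemmixedactions} as its condition~(1) makes its conditions~(2) and~(3) equivalent, that is $\textbf{laut}(\alpha,y,x)\Leftrightarrow\textbf{stu 1}(\alpha,y,x)$. But $\textbf{laut}(\alpha,y,x)$, taken for all $x,y$, is exactly the first automorphism identity of Part (1). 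To recover the second identity ${}^{\alpha}(x^y)=({}^{\alpha}x)^{({}^{\alpha}y)}$ I would apply $\textbf{lr3}$ to the triple $(\alpha,x,y)$, which reads $({}^{\alpha}x)^{({}^{\alpha^x}y)}={}^{(\alpha^{{}^xy})}(x^y)$, and then use $\textbf{stu 1}$ twice: as ${}^{\alpha^x}y={}^{\alpha}y$ in the exponent on the left, and as ${}^{\alpha^{({}^xy)}}(x^y)={}^{\alpha}(x^y)$ on the right (both legitimate because ${}^xy,\,x^y\in X$ by $r$-invariance). This collapses the equation to $({}^{\alpha}x)^{({}^{\alpha}y)}={}^{\alpha}(x^y)$, the missing condition. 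Hence $\textbf{stu 1}$ implies both automorphism identities, while conversely $\Lcal_{\alpha|X}\in\Aut(X,r_1)$ already supplies $\textbf{laut}$, whence $\textbf{stu 1}$ via Lemma~\ref{lemmixedactions}.

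The other three equivalences run on the same template applied to the appropriate mixed triples: $\textbf{stu 2}\Leftrightarrow\Rcal_{\alpha|X}\in\Aut(X,r_1)$ uses the right-hand mirror of Lemma~\ref{lemmixedactions} (with $\textbf{r1}$ in place of $\textbf{l1}$) together with $\textbf{lr3}$, while $\textbf{stu 3}$ and $\textbf{stu 4}$ follow by interchanging the roles of $X$ and $Y$ (and of $\alpha\in Y$ with $x\in X$). The main obstacle is purely bookkeeping: for each of the four cases one must choose the correct mixed YBE relation among $\textbf{l1}$, $\textbf{r1}$, $\textbf{lr3}$ with the $Y$-element in the right slot, and apply the relevant $\textbf{stu}$ identity in both its guises—as a substitution in the base and in the exponent—verifying each time that every intermediate term remains inside the $r$-invariant set on which the action is defined.
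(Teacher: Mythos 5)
Your proposal is correct and follows essentially the same route as the paper's own proof: parts (1) and (2) by componentwise unwinding of $(\tau\times\tau)\circ r_1=r_1\circ(\tau\times\tau)$, and part (3) by feeding \textbf{l1} (supplied by the braiding of $Z$) into Lemma~\ref{lemmixedactions} to pass between \textbf{stu 1} and \textbf{laut}, then applying \textbf{lr3} with \textbf{stu 1} used twice (in the exponent and on the right-hand side) to recover ${}^{\alpha}(x^y)=({}^{\alpha}x)^{({}^{\alpha}y)}$, with the converse again via Lemma~\ref{lemmixedactions}. The only difference is cosmetic: you explicitly note that the remaining three equivalences need the mirror (right-hand) version of Lemma~\ref{lemmixedactions}, which the paper leaves implicit under ``analogous argument.''
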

\begin{proof}
(1). By definition $\Lcal_{\alpha|X} \in \Aut (X, r_X)$ \emph{iff} \[(\Lcal_{\alpha|X}\times \Lcal_{\alpha|X})\circ r =
r\circ (\Lcal_{\alpha|X}\times \Lcal_{\alpha|X}),\] so part (1) follows straightforwardly  from the equalities in $X^2$ given below:
\[
\begin{array}{ll}
(\Lcal_{\alpha|X}\times \Lcal_{\alpha|X}\circ r)(x,y) &= ({}^{\alpha}{({}^xy)},{}^{\alpha}{(x^y)}) \\
r\circ (\Lcal_{\alpha|X}\times \Lcal_{\alpha|X})(x,y) &=({}^{{}^{\alpha}x}{({}^{\alpha}y)},({}^{\alpha}x)^{({}^{\alpha}y)}), \; \alpha \in Y, \; x,y \in X.
\end{array}
\]
Part (2) is analogous.

(3). We shall prove the first implication
\begin{equation}
\label{stueq3}
{\rm\bf stu 1 :}\quad {}^{{\alpha}^y}x  =  {}^{\alpha}x,\; \forall \alpha\in Y, x,y \in X \;\Longleftrightarrow \;\Lcal_{\alpha|X} \in \Aut (X, r_1), \; \forall \alpha\in Y.
\end{equation}
Recall first that the braided set $(Z,r)$ satisfies conditions \textbf{l1}, \textbf{lr3}, see Remark \ref{ybe}.

\textbf{stu 1} $\Longrightarrow \Lcal_{\alpha|X} \in \Aut (X, r_1)$.
Assume \textbf{stu 1} holds in $Z$. This is condition (3) of Lemma \ref{lemmixedactions}. Note that $(Z,r)$ satisfies \textbf{l1}, and therefore condition (1) in Lemma \ref{lemmixedactions} is also satisfied. Hence by Lemma \ref{lemmixedactions} the remaining condition (2) is also in force. This gives
\begin{equation}
\label{stueq4}
{}^{\alpha}{({}^yx)} = {}^{{}^{\alpha}y}{({}^{\alpha}x)},
\quad \forall \alpha\in Y, x,y \in X.
\end{equation}
We shall prove
\begin{equation}
\label{stueq5}
{}^{\alpha}{(x^y)}= ({}^{\alpha}x)^{({}^{\alpha}y)}, \quad
 \forall \alpha\in Y, x,y \in X.
\end{equation}
We use \textbf{lr3} and \textbf{stu1} to deduce the following equalities
\[
 \begin{array}{ll}
{({}^{\alpha}x)}^{({}^{{\alpha}^x}{y})} \ = \ {}^{({\alpha}^{{}^xy})}{(x^y)} \quad &:\text{by \textbf{lr3}}\\
{}^{({\alpha}^{{}^xy})}{(x^y)} = {}^{\alpha}{(x^y)}\quad &: \text{by \textbf{stu1}}\\
{({}^{\alpha}x)}^{({}^{{\alpha}^x}{y})} = {({}^{\alpha}x)}^{({}^{\alpha}{y})}\quad &:\text{by \textbf{stu1}},
\end{array}
\]
which imply  (\ref{stueq5}). Hence  $\Lcal_{\alpha|X} \in \Aut (X, r_1), \;\; \forall \alpha \in Y.$

$\Lcal_{\alpha|X} \in \Aut (X, r_1) \Longrightarrow \textbf{stu 1}$. Suppose $\Lcal_{\alpha|X} \in \Aut (X, r_1)$,
so by part (1) of our proposition
\[
{}^{\alpha}{({}^yx)} = {}^{{}^{\alpha}y}{({}^{\alpha}x)}, \forall x,y \in X,\]
which is exactly condition (2)
of Lemma \ref{lemmixedactions}. Condition (1)
 of Lemma \ref{lemmixedactions} holds, this is \textbf{l1},
and therefore, the remaining condition (3) of Lemma \ref{lemmixedactions}  is also satisfied, but this is exactly \textbf{stu 1}.
We have proven the equivalence (\ref{stueq3}).
Analogous argument proves the remaining three equivalences in (\ref{stueq2}).
\end{proof}

Lemma \ref{extlemma} and Proposition \ref{stuprop} imply straightforwardly the following.
\begin{corollary}
\label{stucor}
Suppose  $(Z,r)$ is a  nondegenerate injective braided set
which splits as a disjoint union $Z = X\bigcup Y$ of its $r$-invariant subsets $X$ and $Y$. Let
 $(X,r_1)$ and $(Y, r_2)$ be the induced subsolutions (so $(X, r_1)$ and $(Y, r_2)$ are also injective).
Then $(Z,r) = X\stu^{\ast} Y$ is a generalized strong twisted union if and only if the following four conditions hold.
\begin{enumerate}
\item[(i)]
\label{stutheor1}
The assignment $x
\longmapsto \Lcal_{x|Y}$
extends to a  group homomorphism
\[
\Lcal_{X|Y}:  G_X \longrightarrow \Aut(Y, r_Y).
  \]
\item[(ii)] The assignment $x \longrightarrow    \Rcal_{x|Y}$
extends to a  group homomorphism
\[\Rcal_{X|Y}:  G_X \longrightarrow \Aut(Y,r_Y).  \]
\item[(iii)]
\label{stutheor2}
The assignment $\alpha
\longrightarrow \Lcal_{\alpha|X}$
extends to a  group homomorphism
\[\Lcal_{Y|X}:  G_Y \longrightarrow \Aut(X,r).  \]
\item[(iv)] The assignment $\alpha
\longrightarrow   \Rcal_{\alpha|X}$
extends to
a  group homomorphism
\[\Rcal_{Y|X}:  G_Y \longrightarrow \Aut(X,r).  \]
\end{enumerate}
\end{corollary}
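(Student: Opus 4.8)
The plan is to deduce the corollary by combining the group-action statement of Lemma \ref{extlemma} with the pointwise automorphism criteria of Proposition \ref{stuprop}, together with the elementary observation that a homomorphism into a symmetric group has image inside a prescribed subgroup precisely when the images of a generating set do. First I would recall that, by Definition \ref{STUgendef}, the assertion $Z = X\stu^{\ast} Y$ is \emph{by definition} the conjunction of the four ground-action identities \textbf{stu 1}--\textbf{stu 4} displayed in (\ref{stueq1}). Thus it suffices to show that each of the four conditions (i)--(iv) is equivalent to the corresponding identity among \textbf{stu 1}--\textbf{stu 4}, and then to take the conjunction.

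Next I would invoke Lemma \ref{extlemma}: each of the four assignments $x\longmapsto \Lcal_{x|Y}$, $x\longmapsto \Rcal_{x|Y}$ (for $x\in X$) and $\alpha\longmapsto \Lcal_{\alpha|X}$, $\alpha\longmapsto \Rcal_{\alpha|X}$ (for $\alpha\in Y$) already extends to a group homomorphism into the \emph{full} symmetric group, namely into $\Sym(Y)$ from $G_X$ in the first two cases and into $\Sym(X)$ from $G_Y$ in the last two. Here the injectivity hypothesis on $(X,r_1)$ and $(Y,r_2)$ is what guarantees (via Definition \ref{injectivedef}) that $X$ embeds in $G_X$ and $Y$ in $G_Y$, so that ``extending the assignment on generators'' is unambiguous and the codomain $\Aut(X,r)$ in (iii)--(iv) is genuinely the automorphism group of the induced subsolution $(X,r_1)$.

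The one substantive step is then purely group-theoretic. Since $X$ generates $G_X$ and $Y$ generates $G_Y$, the image of any such homomorphism is generated by the images of the generators; and since $\Aut(Y,r_Y)\leq \Sym(Y)$ and $\Aut(X,r_1)\leq \Sym(X)$ are subgroups, hence closed under products and inverses, the homomorphism factors through the automorphism subgroup if and only if every generator is sent into it. Concretely, condition (i) holds iff $\Lcal_{x|Y}\in \Aut(Y,r_Y)$ for all $x\in X$, and likewise (ii), (iii), (iv) hold iff $\Rcal_{x|Y}$, $\Lcal_{\alpha|X}$, $\Rcal_{\alpha|X}$ respectively lie in the relevant automorphism group for all generators. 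By the four equivalences of Proposition \ref{stuprop}, (\ref{stueq2}), these pointwise conditions are \emph{precisely} \textbf{stu 3}, \textbf{stu 4}, \textbf{stu 1}, \textbf{stu 2} respectively. Taking the conjunction, (i)--(iv) hold simultaneously iff \textbf{stu 1}--\textbf{stu 4} hold, i.e. iff $Z = X\stu^{\ast} Y$, which is the claim.

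I do not anticipate a genuine obstacle, since all the real content has been isolated upstream: Lemma \ref{extlemma} supplies the homomorphisms into $\Sym$, and Proposition \ref{stuprop} supplies the automorphism criteria. The only point demanding care is bookkeeping, namely matching each condition (i)--(iv) to the correct identity among \textbf{stu 1}--\textbf{stu 4} (note that the left/right and $X/Y$ roles are transposed between the two lists, so (i) pairs with \textbf{stu 3} rather than \textbf{stu 1}), and confirming that the generating-set argument is applied with the right generators in each of $G_X$ and $G_Y$.
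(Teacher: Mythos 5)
Your proposal is correct and takes essentially the same route as the paper: the paper states that Lemma \ref{extlemma} and Proposition \ref{stuprop} ``imply straightforwardly'' the corollary, and your argument is exactly that combination, with the elementary generating-set observation and the pairing of (i)--(iv) with \textbf{stu 3}, \textbf{stu 4}, \textbf{stu 1}, \textbf{stu 2} (which you match correctly) spelled out explicitly.
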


\begin{theorem}
\label{stuthm}
Suppose  $(Z,r)$ is a  nondegenerate 2-cancellative braided set
which splits as a generalized strong twisted union $Z = X\stu^{\ast} Y$ of its $r$-invariant subsets $X$ and $Y$. Let
 $(X,r_X)$ and $(Y, r_Y)$ be the induced subsolutions, $S=S(X,r_X)$, $T = S(Y, r_Y)$, $U= S(Z,r)$ in usual notation.
 Let $(S,r_S)$,  $(T,r_T)$, $(U,r_U)$ be the corresponding braided monoids, see Fact \ref{theoremA}.
 Then the following conditions hold.

 \begin{enumerate}
\item
The braided monoid $(U,r_U)$ has a canonical structure of a generalized strong twisted union
\[ (U,r_U) =  (S,r_S)\stu^{\ast}(T,r_T),\]
extending the ground actions  of the generalized strong twisted union $Z = X\stu ^{\ast} Y.$
\item
 Let
$(G_Z,r_{G_Z})$,  be the associated braided group. Suppose furthermore that $(Z, r)$ is injective, so
$X$ and $Y$ are also embedded in $G_Z$, and
let $G_1$ and $G_2$ be the subgroups of $G_Z$ generated by $X$ and $Y$, respectively. Then $G_1$ and $G_2$ are $r_{G_Z}$-invariant
and the braided group $(G_Z,r_{G_Z})$ has a canonical structure of a generalized strong twisted union
\[ (G_Z,r_{G_Z}) =  (G_1,r_1)\stu^{\ast}(G_2,r_2),\]
where $r_1$ is the restriction of $r_{G_Z}$ on $G_1\times G_1$ and $r_2$ is the restriction of $r_{G_Z}$ on $G_2\times G_2$.
\end{enumerate}
\end{theorem}

\begin{proof}
(1). It follows from  Fact \ref{theoremA} that $U = S(Z,r_Z)$ has the structure of a graded braided monoid $(U, r_U)$
 with a braiding operator $r_U$ extending $r$, moreover $(U, r_U)$ is a (general) extension of the disjoint braided monoids $(S, r_S)$ and $(T, r_T)$. We have to show that the four \textbf{stu} conditions are satisfied, see (\ref{stueq1}).
  We shall use induction on lengths of words to prove
\begin{equation}
\label{stueq7}
 \begin{array}{l}
 {\rm\bf stu 1 :} \quad {}^{u^b}a  =  {}^{u}a,\; \forall u\in T, a,b \in S.
\end{array}
\end{equation}
\emph{Step 1.} First we prove  (\ref{stueq7}) for all $a \in S$, $b=y\in X$, $u = \alpha \in Y$ by induction on the length $|a|$ of $a$.
Condition \textbf{stu 1} on $Z$ gives the base for the induction. Assume (\ref{stueq7}) is true for all $u \in Y, b \in X$ and all
 $a\in S$, with $|a| \leq n$. Suppose $a \in S, |a| = n+1$, $u = \alpha \in Y, b = y \in X$. Then $a =tc$, where $c\in S, |c|= n, t \in X$, and
 the following equalities hold in $U$.
 \begin{equation}
\label{stueq8}
 \begin{array}{lll}
 {}^{{\alpha}^y}a  = {}^{{\alpha}^y}{(tc)} &=({}^{{\alpha}^y}t) {}^{({\alpha}^y)^t}c \quad &:\text{by \textbf{ML2}}\\
                                           &=({}^{\alpha}t) {}^{({\alpha}^y)}c \quad &:\text{by \textbf{stu 1} and IH}\\
                                           &=({}^{\alpha}t) ({}^{\alpha}c) \quad &:\text{by \textbf{stu 1} and IH}.
   \end{array}
\end{equation}
where IH is the inductive assumption.
\begin{equation}
\label{stueq9}
 \begin{array}{lll}
 {}^{\alpha}a  = {}^{\alpha}{(tc)} &=({}^{\alpha}t) {}^{({\alpha}^t)}c \quad &:\text{by \textbf{ML2}}\\
                                       &=({}^{\alpha}t) ({}^{\alpha}c) \quad &:\text{by \textbf{stu 1} and IH}. \\
   \end{array}
\end{equation}
Equalities (\ref{stueq8}) and (\ref{stueq9}) imply ${}^{{\alpha}^y}a = {}^{{\alpha}}a$, and therefore
\begin{equation}
\label{stueq10}
 {}^{{\alpha}^y}a = {}^{{\alpha}}a \quad \forall a \in S, \forall y\in X, \alpha \in Y.
 \end{equation}

\emph{Step 2.} We use induction on the length $|u|$ of $u \in T$ to prove
\begin{equation}
\label{stueq11}
 {}^{{u}^y}a = {}^{{u}}a \quad \forall a \in S, u \in T, y\in X.
 \end{equation}

Condition (\ref{stueq10}) gives the base for the induction.
Assume (\ref{stueq11}) holds for all $a \in S, y \in X$, and all $u \in T,$ with $|u|\leq n$.
Let $a \in S, y \in X$, and  $u \in T, |u|= n+1$. Then $u = \alpha v, v \in T, |v|= n, \alpha \in Y$ and the following equalities hold
in $U$.

\begin{equation}
\label{stueq12}
 \begin{array}{lll}
{}^{{u}^y}a= {}^{{(\alpha v)}^y}a  &= {}^{({\alpha}^{{}^vy}) (v^y)}a \quad &:\text{by \textbf{MR2}}\\
                                  &={}^{({\alpha}^{{}^vy})} {({}^{(v^y)}a)} &\\
                                  &={}^{\alpha}{({}^va)} \quad &:\text{by \textbf{stu 1} and IH}\\
                                  &=  {}^{(\alpha v)}{a}  = {}^ua. \quad &
   \end{array}
\end{equation}
This proves (\ref{stueq11})

\emph{Step 3.} Finally, we prove (\ref{stueq7}), for all $a,b \in S, u \in T,$  by induction on the length $|b|$ of $b$.
The base of the induction is given by (\ref{stueq11}). Assume (\ref{stueq7}) holds for all $b \in S$, with $|b|\leq n$.
Let $b = cy, c \in S, |c| = n,  y \in X$.

\begin{equation}
\label{stueq13}
 \begin{array}{lll}
{}^{{u}^b}a= {}^{({u}^{cy})}a  &= {}^{({u}^c)^y}a&\\
                         &= {}^{({u}^c)}a \quad &:\text{since ${u}^c\in T$ and by  IH}\\
                         &= {}^ua \quad &:\text{by  IH}.\\

   \end{array}
\end{equation}
The remaining \textbf{stu} conditions, see (\ref{stueq1}), are proven by a similar argument.
We have proven part (1).

Each of the parts (1) and (2)  should be proved separately, although we use similar arguments, since, in general, the braided monoids $U, S, T$ are not embedded in the corresponding braided groups.

Sketch of proof of (2).
Note that every element $a \in G$ can be presented as a monomial
\begin{equation}
\label{keyeq2} a = \zeta_1\zeta_2 \cdots \zeta_n,\quad \zeta_i \in Z
\bigcup Z^{-1}.
\end{equation}
By convention we consider  \emph{a reduced form of} $a$, that is a
presentation \ref{keyeq2} with minimal length $n$.
Bearing this in mind, we prove (\ref{stueq1}) in $G_Z$,
using an argument similar to our argument for monoids, but at each step we use
induction on the length $n$ of \emph{the reduced form} of the corresponding words $a$, $u$, $b$.
\end{proof}

 \begin{corollary}
 \label{SD_stu_lemmaCOR}
In notation as in Theorem \ref{stuthm}.
  Suppose $(Z,r)$ is a 2-cancellative \textbf{SD}
 braided set (that is $(X,\la)$ is a rack),
 which decomposes as a union of disjoint $r$-invariant subsets $Z= X \bigcup Y.$ Then
 $Z$ is a generalized strong twisted union of racks, $Z = X \stu^{\ast} Y$.
 Moreover, (i) the braided monoids $(U,r_U),  (S,r_S),  (T, r_T)$, are self distributive
 and $U$ is a generalized strong twisted union
 \[ (U,r_U) =  (S,r_S)\stu^{\ast}(T,r_T);\]
 (ii) Let
$(G_Z,r_{G_Z})$,  be the associated braided group, and suppose $(Z, r)$ is injective,
so $X$ and $Y$ are also embedded in $G_Z$.
Let $G_1$ and $G_2$ be the subgroups of $G_Z$ generated by $X$ and $Y$, respectively. Then $G_1$ and $G_2$ are $r_{G_Z}$-invariant
and the braided group $(G_Z,r_{G_Z})$ has a canonical structure of a generalized strong twisted union
\[ (G_Z,r_{G_Z}) =  (G_1,r_1)\stu^{\ast}(G_2,r_2),\]
where $r_1$ is the restriction of $r_{G_Z}$ on $G_1\times G_1$ and $r_2$ is the restriction of $r_{G_Z}$ on $G_2\times G_2$.
\end{corollary}

\subsection{"Local" conditions  sufficient for a generalized strong twisted unions of nondegenerate braided sets to be also a braided set}
\label{subsec:StuBraidedsets}
\begin{definition}\label{extendedleftaction} \cite{GIM08}
Given a quadratic set $(X,r)$ we extend the  actions ${}^x\bullet$ and  $\bullet
^x$ on $X$ to left and right actions on $X\times X$ as follows.
For $x,y,z \in X$ we define:
\[ {}^x{(y,z)}:=({}^xy,{}^{x^y}z), \quad
\text{and} \quad (x,y)^z:= (x^{{}^yz}, y^z).\]
The map $r$ is called, respectively, \emph{left and right invariant} if
\[\begin{array}{lclc}
 {\bf l2:}\quad&  r({}^x{(y,z)})={}^x{(r(y,z))},
 \quad\quad\quad
 & {\bf r2:}\quad&
r((x,y)^z)={(r(x,y))}^z
\end{array}
\]
hold for all $x,y,z\in Z$.
\end{definition}
Conditions \textbf{l2} and \textbf{r2} give a more compact way to express \textbf{l1}, \textbf{r1}, \textbf{lr3}, since
the following implications hold:
\begin{equation}
\label{l2l1}
{\bf l2} \Longleftrightarrow {\bf l1,lr3};\quad\quad\quad\quad\quad\quad
{\bf r2} \Longleftrightarrow {\bf r1,lr3}.
\end{equation}

\begin{remark}
\label{leml1r2} \cite{GIM08}
Let $(X,r)$ be a quadratic set.  Then the following three conditions are equivalent:
(a) $(X,r)$ is a braided set;
(b) $(X,r)$ satisfies {\bf l1} and {\bf r2};
(c) $(X,r)$ satisfies {\bf r1} and {\bf l2}.
\end{remark}

\begin{notation}
\label{mlnotation}
\cite{GIM08}
 When we study extensions it is convenient to have a `local' notation for some of our conditions, in which the specific elements for which the condition is being imposed will be explicitly indicated in lexicographical order of first appearance. Thus for example {\bf l1(x,y,z)} means the condition exactly as written in Remark~\ref{ybe} for the specific elements
 $x,y,z$. Similarly {\bf r2(x,y,z)} means for the elements $x,y,z$ exactly  as appearing as in Definition~\ref{extendedleftaction}.

In this section we consider triples in the set $Z^3$ so for example
\[ {\bf l1(x, \alpha,y)}:\quad {}^x{({}^{\alpha}y)}={}^{{}^x{\alpha}}{({}^{x^{\alpha}}y)}, \; \alpha,x,y\in Z.\]

Finally, we use this notation to specify the restrictions of any of our conditions to subsets of interest. For example
\[\begin{array}{l}
{\bf l1(X,Y,X)}:=\{{\bf l1(x, \alpha,y)},\quad \forall
 \; x,y\in X, \;\alpha\in Y\}.\\
{\bf r2(X,Y,X)}:=
(r(x, \alpha))^y =r({(x, \alpha)}^y),  \quad \forall x, y\in X, \alpha\in Y.
\end{array}
\]
  \end{notation}

The following result gives a necessary and sufficient condition so that a (general) quadratic set which is a generalized strong twisted union
$(Z, r) =(X,r_1)\stu^{\ast}(Y,r_2)$ of two disjoint braided sets is also a braided set.

\begin{proposition}
\label{stutheor}
Suppose a nondegenerate and injective quadratic set $(Z,r)$ is a generalized strong twisted union of two disjoint 2-cancellative braided sets
$(X,r_X)$ and $(Y, r_Y)$.
 Then $(Z,r)$ obeys YBE \emph{iff} the following hold.
\begin{enumerate}
\item
\label{stutheor1}
Conditions (i) through (iv) in Corollary \ref{stucor} are satisfied;
\item
\label{stutheor3}
The actions satisfy the following four "mixed" conditions
\begin{equation}
\label{mixedcond}
{\rm\bf l1(X,Y,X)},\; \;{\rm\bf r2(X,Y,X)}, \;\; {\rm\bf l1(Y,X,Y)},\; \; {\rm\bf r2(Y,X,Y)}.
\end{equation}
\end{enumerate}
\end{proposition}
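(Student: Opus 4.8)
The plan is to test the Yang--Baxter equation via the local criterion of Remark~\ref{leml1r2}: $(Z,r)$ is a braided set if and only if \textbf{l1} and \textbf{r2} hold for every triple $(a,b,c)\in Z^3$. Since $Z=X\cup Y$, each triple falls into one of eight types according to the membership of $a,b,c$ in $X$ or $Y$, and I will dispatch them type by type. Two remarks are used at the outset: injectivity of $(Z,r)$ forces it to be $2$-cancellative, and the hypothesis that $(Z,r)$ is a generalized strong twisted union gives the four ground identities \textbf{stu1}--\textbf{stu4} of~(\ref{stueq1}) for free.

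For the implication ``YBE $\Rightarrow$ (1) and (2)'', suppose $(Z,r)$ obeys the YBE. Then it is a non-degenerate $2$-cancellative braided set splitting as $X\stu^{\ast}Y$, and its sub-solutions are injective by Theorem~\ref{stuthm}(2), so Corollary~\ref{stucor} applies and yields conditions (i)--(iv), i.e.\ (1). Being braided, $(Z,r)$ also satisfies \textbf{l1} and \textbf{r2} on all triples, in particular on the alternating types $XYX$ and $YXY$; these four instances are precisely the mixed conditions \textbf{l1(X,Y,X)}, \textbf{r2(X,Y,X)}, \textbf{l1(Y,X,Y)}, \textbf{r2(Y,X,Y)} of~(2). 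This direction is thus immediate.

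For the converse, assume (1) and (2) and verify \textbf{l1}, \textbf{r2} on each type. The pure types $XXX$, $YYY$ hold since $(X,r_X)$ and $(Y,r_Y)$ are braided, and the alternating types $XYX$, $YXY$ are exactly~(2). The work lies in the four ``$2+1$'' types $XXY$, $YYX$, $YXX$, $XYY$. Here conditions (i)--(iv) provide, directly by their phrasing, both that the ground maps are automorphisms (each $\Lcal_{\alpha|X},\Rcal_{\alpha|X}\in\Aut(X,r_X)$ and each $\Lcal_{x|Y},\Rcal_{x|Y}\in\Aut(Y,r_Y)$) and that the associated actions of $G_X$ on $Y$ and of $G_Y$ on $X$ factor through the groups. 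The strategy is to reduce each fragment \textbf{l1}, \textbf{r1}, \textbf{lr3} of the braid relation on such a triple to one of these two kinds of statement, using a \textbf{stu} identity to replace a modified action such as ${}^{\alpha^x}y$ or $x^{{}^y\alpha}$ by its plain form ${}^{\alpha}y$ or $x^{\alpha}$. For instance, on $YXX$ (with $\alpha\in Y$, $x,y\in X$) \textbf{stu1} turns \textbf{l1} into ${}^{\alpha}({}^xy)={}^{{}^{\alpha}x}({}^{\alpha}y)$ and \textbf{lr3} into ${}^{\alpha}(x^y)=({}^{\alpha}x)^{({}^{\alpha}y)}$, the two halves of $\Lcal_{\alpha|X}\in\Aut(X,r_X)$ (the first being exactly the implication furnished by Lemma~\ref{lemmixedactions} from \textbf{stu1} and \textbf{laut}), while \textbf{r1} becomes the orbit identity $(\alpha^x)^y=(\alpha^{{}^xy})^{x^y}$, which holds because the right $G_X$-action on $Y$ respects the relation $xy=({}^xy)(x^y)$ of $G_X$ (condition (ii)). Dually, on $XXY$ (with $x,y\in X$, $\alpha\in Y$) \textbf{l1} is the identity ${}^{x}({}^{y}\alpha)={}^{{}^xy}({}^{x^y}\alpha)$, i.e.\ ${}^{xy}\alpha$ read two ways through the well-defined left $G_X$-action on $Y$ (condition (i)), whereas \textbf{r1} and \textbf{lr3} reduce, via \textbf{stu2} and the unwinding of $\Rcal_{\alpha|X}\in\Aut(X,r_X)$, to $(x^y)^{\alpha}=(x^{\alpha})^{(y^{\alpha})}$ and $({}^xy)^{\alpha}={}^{x^{\alpha}}(y^{\alpha})$. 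The types $XYY$ and $YYX$ are the mirror images under $X\leftrightarrow Y$, using \textbf{stu3}, \textbf{stu4} and conditions (iii), (iv). Having checked all eight types, $(Z,r)$ satisfies \textbf{l1} and \textbf{r2} everywhere and hence obeys the YBE.

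The step I expect to be the main obstacle is this dispatch of the four ``$2+1$'' types: one must pair each of \textbf{l1}, \textbf{r1}, \textbf{lr3} with the right \textbf{stu} identity and the right half of the relevant automorphism condition, and, above all, extract from the clause ``extends to a group homomorphism'' in (i)--(iv) the orbit-invariance identities of the form ${}^{xy}\alpha={}^{({}^xy)(x^y)}\alpha$. The point is that $xy=({}^xy)(x^y)$ holds in $G_X$ by the very definition of $G(X,r_X)$, and, since the action factors through $G_X$, this descends to the desired equality on $Y$; the analogous relation $\alpha\beta=({}^{\alpha}\beta)(\alpha^{\beta})$ in $G_Y$ handles the $X$-valued actions. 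Keeping track of which fragments need the automorphism half and which need the homomorphism half, so that the two alternating-type inputs (2) and the four homomorphism/automorphism inputs (1) together cover exactly the six mixed types, is the crux of the argument.
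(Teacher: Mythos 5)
Your proposal is correct and follows essentially the same route as the paper's own proof: the forward direction via Remark \ref{leml1r2} and Corollary \ref{stucor} (with Theorem \ref{stuthm}(2) supplying injectivity of the subsolutions), and the converse by splitting $Z^3$ into the two pure types, the four ``$2+1$'' types handled by condition (1), and the two alternating types handled by condition (2). In fact your case-by-case reductions of \textbf{l1}, \textbf{r1}, \textbf{lr3} to the automorphism halves and to the group-homomorphism identities (via \textbf{stu1}--\textbf{stu4} and Lemma \ref{lemmixedactions}) supply exactly the details that the paper's proof only asserts in its final two sentences.
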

\begin{proof}
The proof is routine and an experienced reader may skip it.

Assume $(Z,r)$ obeys YBE. Then, by Remark \ref{leml1r2}, conditions {\bf l1} and {\bf r2}, (and {\bf r1} and {\bf l2}) are satisfied for any triple $(a,b,c) \in Z^3$. In particular, the mixed conditions (\ref{mixedcond}) hold, which proves (2).
By assumption the braided set $(Z,r)$ is a strong twisted union $Z = X\stu^{\ast} Y$, so the hypothesis of Corollary \ref{stucor} is satisfied, which implies (1).

Assume now that (1) and (2) are satisfied. We have to show that $(Z,r)$ is a braided set.
Recall that the YB-diagram starting with the triple $(a, b, c)\in Z^3$ shows that
\[\begin{array}{lll}
r^{12}r^{23}r^{12}(a,b,c)&=r^{23}r^{12}r^{23}(a,b,c)& \Longleftrightarrow {\rm\bf r1(a,b,c), l2(a,b,c)} \\
                                     & &\Longleftrightarrow {\rm\bf l1(a,b,c), r2(a,b,c)},\\
                                     &\forall a,b,c \in Z.&
\end{array}
 \]
 There is nothing to prove if $(a,b,c) \in X^3,$ or $(a,b,c) \in Y^3$, since by hypothesis $(X,r_X)$ and $(Y, r_Y)$ are braided sets.

Our argument uses the presentation of the set $Z^3 \setminus (X^3\bigcup Y^3)$ as a union of six disjoint subsets
\[\begin{array}{ll}Z^3 \setminus (X^3\bigcup Y^3)= &
                    (X\times X\times Y) \bigcup (Y\times X\times X)\bigcup(X\times Y\times Y)\\& \bigcup(Y\times Y\times X)\bigcup(X\times Y\times X) \bigcup (Y\times X\times Y).
                    \end{array}
                    \]
Clearly, $(Z,r)$ obeys YBE \emph{iff} each of the sets on the right-hand side of the above equality satisfies simultaneously the "mixed" conditions \textbf{l1} and  \textbf{r2} (or equivalently, \textbf{r1} and  \textbf{l2}).
Analyzing with details each of the corresponding six cases we note that

Condition (1) implies (a) \textbf{l1}(X,X,Y) and \textbf{r2}(X,X,Y); (b) \textbf{l1}(Y,X,X), and \textbf{r2}(Y,X,X);
(c) \textbf{l1}(X,Y,Y), and \textbf{r2}(X,Y,X); (d) \textbf{l1}(Y,Y,X), and \textbf{r2}(Y,Y,X).
(In fact  (1) encodes exactly these eight (mixed) conditions).

 Condition (2) gives the missing  "mixed" conditions (\ref{mixedcond}) not encoded in  (1).
\end{proof}

\section{Questions}
\label{sec:questions}
\subsection{Some open questions}
\label{sec:open_questions}

\begin{question}
\label{new_question1}
Let $(X,r)$ be a square-free nondegenerate
   \emph{ quadratic set} of finite order $|X|=n$.
  Suppose its associated algebra $A=  A(\textbf{k}, X, r)$ is a \emph{PBW} algebra.

  (We know that these assumptions imply that  $r^2 = 1$, and $(X,r)$ is 2-cancellative, see Sec 3).
\begin{enumerate}
 \item
  Is it true that the algebra $A$ has polynomial growth?

An equivalent question is:
 \item Is it true that the algebra $A$ has finite global dimension?
\end{enumerate}
\end{question}
This is so for $|X|=3,$ see Lemma \ref{lem:n3}.

For each $n \geq 3$ an affirmative answer of (1), or (2) would imply that $(X,r)$ is a solution of YBE, and all conditions (1) through (8) in Theorem
\ref{thm:new_theorem} are satisfied. A counterexample would also be interesting.

\begin{question}
\label{new_question2}
Suppose $(X, r)$ is a square-free 2-cancellative \emph{quadratic set} of finite
order $|X| \geq 3$.

\begin{enumerate}
\item
\label{new_question2_1}
Is it true that if $(X, r)$ is self distributive and satisfies the minimality condition, $\dim A_2 = 2|X|-1$, then  $(X, r)$ is a braided set?

Our assumptions imply that $(X, r)$ is nondegenerate and $\Lcal_x(y)\neq y, \forall  x,y\in X, x\neq y$, see Lemma \ref{minimaldim_lemma}.
\item
\label{new_question2_2}
In particular,  is it true that if $(X, r)$ is a self distributive quadratic set, of prime
order $|X| = p$ and satisfies the minimality condition, $\dim A_2 = 2|X|-1$, then $\Lcal_x^2= \id_X, \forall x \in X$?


\item  What can be said about a (general) square-free 2-cancellative \emph{quadratic set} $(X, r)$ if its Koszul dual algebra satisfies $A^{!}_3 =0$?
In
particular, study the braided sets $(X, r)$ for which $A^{!}_3 =0$.
\end{enumerate}
\end{question}

It follows from Corollary \ref{Corquandle5}, and Lemma \ref{minimaldim_lemma2} that the answers to (\ref{new_question2_1}) and (\ref{new_question2_2})  are affirmative, whenever $3 \leq |X|\leq 5$. In this case  (up to isomorphism) there are two \textbf{SD} quadratic sets with 2-cancellation and satisfying the minimality condition, namely:
(a) $(X,r)$ is the quadratic set corresponding to the dihedral quandle of order $3$; and (b)
$(X,r)$ is the quadratic set corresponding to the dihedral quandle of order $5$. Clearly, each of those is a braided set.


\begin{problem}
\label{problemM1}
\textbf{\emph{Given the following data:}}
(a) A set $X$ of odd cardinality $n = 2k+1$;
(b) a cyclic  permutation $r_0 \in \Sym (X^2 \setminus \Delta_2)$ of order $n$
\[\cO: a_1b_1 \longrightarrow_{r_0} a_2b_2 \longrightarrow_{r_0} \cdots \longrightarrow_{r_0}   a_{n} b_{n}\longrightarrow_{r_0}a_1b_1,
\]
where $a_i\neq b_i, 1 \leq i \leq n$, $a_i \neq a_j, b_i \neq b_j$, whenever $i \neq j$, $1\leq i,j\leq n$.

\textbf{\emph{Find}} \emph{an extension} $r : X\times X  \longrightarrow X\times X $ of $r_0$
(equivalently, \emph{find all maps $\Lcal_x, x \in X$ explicitly}),
so that

(i) $(X,r)$ is a 2-cancellative square-free \textbf{SD}
\emph{quadratic set} (\emph{we do not assume that $(X,r)$ is a solution}) ;

(ii) $\Lcal_x^2 = \id, \forall  x \in X$.

\emph{Analyze the obtained quadratic set}. In particular, decide (a) whether this data determines an \textbf{SD} solution of YBE?
(b) If moreover, $n=p$ is a prime number
 and the quadratic set $(X,r)$ satisfies the minimality condition \textbf{M} does this imply that $(X,r)$ is a braided set?
\end{problem}

\subsection{Questions posed in a previous version of this work which have been recently answered}

Various questions on braided sets posed in \cite{GI19}v3, and v1  were recently answered in \cite{CJO19}. We give an account of some of our previous questions .

\textbf{Question 5.8.}, \cite{GI19}v3.  (1) For which integers n this lower bound is attainable, that is
there exists a braided set $(X, r)$, $|X| = n$ satisfying the minimality condintion \textbf{M}?
(2) Classify the square-free solutions $(X, r)$ satisfying the minimality condition \textbf{M}. - A complete answer is given in \cite{CJO19}.

\textbf{Conjecture} \textbf{5.10},\cite{GI19}v3.
Let $(X,r)$ be an arbitrary finite nondegenerate braided set with 2-cancellation. Then the monoid $S(X,r)$ is cancellative if and only if $r$ is involutive.

Theorem 5.5, \cite{GI19}v3 (here Theorem \ref{mainth}) confirms this conjecture in the case when $(X,r)$ is an arbitrary square-free nondegenerate braided set of order $|X| =n$.
It was shown in \cite{JespersKuAntwerp}, Theorem 4.5,  that the Conjecture is true for arbitrary finite nondegenerate set-theoretic
solution $(X,r)$ of the Yang-Baxter equation.

\textbf{Questions 6.3.1.} \cite{GI19}v3.
The following questions refer to finite square-free solutions $(X, r)$
which are 2-cancellative.
\begin{enumerate}
\item
Is it true that if a dihedral quandle (X, r) satisfies the minimality condition
M then its order |X| is a prime number?
- \emph{Confirmed in  }\cite{CJO19}.

\item Suppose $(X, r)$ is an indecomposable quandle such that the corresponding
solution $(X, r)$ satisfies the minimality condition M. Does this imply that
the quandle $(X, r)$ is simple?
 -\emph{Yes}, \cite{CJO19}
\item Which of the known simple quandles satisfy the minimality condition M?
\emph{Answer-the dihedral quandles of prime order $p$, see} \cite{CJO19}

\item Study general square-free noninvolutive, braided sets (X, r) which are not
self distributive.
\emph{-This is an ongoing project.}

 Our results in Section \ref{sec:specialext}, see Theorem \ref{theor:irregularext}, and Corollary \ref{irregularextcor} give a method for constructions
  of 'new' noninvolutive solutions $(Z,r)$ with a
prescribed orders $|Z|$, and $|r|$. In this case $(Z,r)$ is a generalized strong twisted union $Z = X\stu^{\ast} Y$  of
 involutive (or noninvolutive) disjoint solutions $(X,r_X)$, $(Y, r_Y)$.

\item Classify the square-free noninvolutive, braided sets (X, r) whose quadratic
algebra satisfy $\gkdim A(k, X, r) = 1$.

Some answers are given in \cite{CJO19}, Example 5.1

\item Classify the square-free noninvolutive braided sets of small orders. In particular, classify the square-free noninvolutive, and not \textbf{SD} braided sets
$(X, r)$ of small order.

\item
 Find examples of indecomposable (not \textbf{SD}) finite square-free solutions.

\item Find examples of indecomposable (not SD) square-free solutions which
satisfy the minimality conditions M.

A complete classification of (general) square-free nondegenerate solutions $(X,r)$ satisfying the minimality condition \textbf{M} is given by  Ced\'{o}, Jespers, and Okninski,
see \cite{CJO19}, Theorem 5.5, and Cor. 5.6. The classification
is made in terms of the so called \emph{derived solution} $(X,r^{\prime}).$
\end{enumerate}

 \begin{remark}
 We have shown that if $(X,r)$ is a finite nondegenerate square-free braided set, where $r$ is not involutive, then the monoid $S=S(X,r)$ is not cancellative (even if $(X,r)$ is 2-cancellative).
 This gives a negative answer
 to Open Question 3.24 in \cite{GIM08}:
 "\emph{Is it true that if $(X,r)$ is a 2-cancellative braided set, then
 the associated monoid $S(X,r)$ is cancellative?}"
 \end{remark}

{\bf Acknowledgments}. This paper was written during my visits at the Abdus Salam
International Centre for Theoretical Physics (ICTP), Trieste, Summer
2018, and at Max Planck Institute for Mathematics, Bonn in 2019. It is my pleasant duty to thank MPIM, Bonn, and the Mathematics group of ICTP
for the inspiring
atmosphere. I thank the referees for their comments and very useful suggestions.


\begin{thebibliography}{1}
\bibitem{AnGra}
{\sc{N. ~Andruskiewitsch,  and M. ~Gra\~{n}a}}, {\em From racks to pointed Hopf algebras}, Adv. in Math.  {\bf{178.2}} (2003)~ 177--243.

\bibitem{AS}
{\sc{M.~Artin,  W.~Schelter}}, {\em Graded algebras of global
dimension 3},
  Adv. in Math. {\bf{66}} (1987)~171--216.

\bibitem{BCV}
{\sc{D.~Bachiller,  F.~Ced\'{o}, L.~Vendramin}}, {\em A characterization of finite multipermutation solutions of the Yang–Baxter equation}, Publicacions Matemàtiques  {\bf{62.2}} (2018)~ 641--649.


\bibitem{Kazhdan}
{\sc{Arkady Berenstein, and David Kazhdan}}, {\em Hecke-Hopf algebras},
  Adv. in Math. {\bf 353} (2019) ~312--395.

\bibitem{Bergman}
{\sc{G.~ M.~ Bergman}}, {\em The diamond lemma for ring theory},
  Adv. in Math. {\bf 29} (1978) ~178--218.


\bibitem{Carter}
{\sc{J.~ S.~ Carter,  M.~Elhamdadi, M.~Saito}}, {\em{Homology theory for the set-theoretic Yang-Baxter
equation and knot invariants from generalizations of quandles}},  Fund. Math. \textbf{184}  (2004), ~31--54.

\bibitem{Catino}
{\sc{M. ~Castelli, F. ~Catino, and G. ~Pinto}}, {\em{A new family of set-theoretic solutions of the Yang–Baxter equation}},
 Communications in Algebra {\bf 46.4}, (2018) ~1622--1629.



\bibitem{CGISm18}
{\sc{F.~Ced\'{o}, T.~Gateva-Ivanova, A.~Smoktunowicz}},
{\em{Braces and symmetric groups with special conditions}},
 J. of Pure and Applied Algebra \textbf{222}, (2018)  ~3877--3890.

\bibitem{CJO19}
{\sc{F.~Ced\'{o}, E.~Jespers, J.~Okninski}}, {\em{Set-theoretic solutions of the Yang--Baxter equation, associated quadratic algebras and the minimality condition}}, Rev. Mat. Complut. (2020)
https://doi.org/10.1007/s13163-019-00347-6.





\bibitem{CJO14}
{\sc{F.~Ced\'{o}, E.~Jespers, J.~Okninski}}, {\em{Retractability of
set theoretic solutions of the Yang--Baxter equation}}, Adv. in
Math. {\bf{224}} (2010) ~2237--2738.


\bibitem{Chouraqui}
{\sc{F.~ Chouraqui}}, {\em{An Algorithmic construction of group automorphisms and the quantum Yang-Baxter equation}}, Communications in Algebra \textbf{46.11} (2018), ~4710-4723.


\bibitem{D}
{\sc{V.~G. Drinfeld}}, {\em On some unsolved problems in quantum group
theory}, Quantum Groups (P.~P. ~Kulish, ed.), Lect. Notes in Mathematics, vol. 1510,
  Springer Verlag, 1992, ~1--8.

\bibitem{ESS}
{\sc{P.~Etingof, T.~Schedler, A.~Soloviev}}, {\em Set-theoretical
solutions  to the quantum Yang-Baxter equation}, Duke Math. J. {\bf
100} (1999) ~169--209.


\bibitem{GI94}
{\sc{T.~Gateva-Ivanova}}, {\em{Noetherian properties of skew polynomial rings with binomial relations
}}, Trans. Amer. Math. Soc. {\bf 343} (1994), no.~1, ~203--219.



\bibitem{GI96}
{\sc{T.~Gateva-Ivanova}}, {\em{Skew polynomial rings with binomial
relations}}, J.\ Algebra {\bf{185}} (1996) ~710--753.


\bibitem{GI04}
{\sc{T.~Gateva-Ivanova}},
{\em{A combinatorial approach to the set-theoretic
solutions of the Yang--Baxter equation}},
 J.Math.Phys., {\bf{45}} (2004) ~3828--3858.


\bibitem{GI04s}
{\sc{T.~Gateva-Ivanova}}, {\em Quantum binomial algebras,
Artin-Schelter regular rings, and solutions of the Yang--Baxter
equations}, Serdica Math. J. {\bf{30}}  (2004) ~431--470.



\bibitem{GI11} {\sc{T. Gateva-Ivanova}}, {\em{Garside structures on
monoids with quadratic square-free relations}} , Algebr. Represent.
Theor \textbf{14} (2011), ~779--802.


\bibitem{GI12}
{\sc{T.~Gateva-Ivanova}},
{\em{Quadratic algebras, Yang-Baxter equation, and Artin-
Schelter regularity}}, Advances in Mathematics, \textbf{230}  (2012),
~ 2152--2175.


\bibitem{GI18}
 {\sc{ Tatiana Gateva-Ivanova}}, {\em{Set--theoretic solutions of the
Yang--Baxter equation, Braces and Symmetric groups}},
 Advances in Mathematics \textbf{338} (2018), ~ 649--701.

\bibitem{GI19}
  {\sc{ Tatiana Gateva-Ivanova}},
  {\em{A combinatorial approach to noninvolutive set-theoretic solutions of the Yang-Baxter equation}},  arXiv preprint arXiv:1808.03938 (2018) variant 3.




\bibitem{GI21}
  T.~Gateva--Ivanova,
  {\em{A combinatorial approach to noninvolutive set-theoretic solutions of the
  Yang-Baxter equation}},
Publicacions Matem\`{a}tiques 65(2) (2021) 747--808.


\bibitem{GIC}  {\sc{Tatiana Gateva-Ivanova, Peter
Cameron}},  {\em{Multipermutation solutions of the Yang-Baxter
   equation}} ,
Comm. Math. Phys, \textbf{309} (2012),~ 583--621.




\bibitem{GIM08}
{\sc{T.~ Gateva-Ivanova, S.~Majid }}, {\em{Matched pairs approach
to set theoretic solutions of the Yang--Baxter equation}}, J.
Algebra {\bf{319}} (2008) ~ 1462--1529.

\bibitem{GIM11} {\sc{T. ~Gateva--Ivanova, S. ~Majid}}, {\em{Quantum
spaces associated to multipermutation solutions of
   level two}},   Algebr. Represent. Theor., \textbf{14} (2011) ~341--376

\bibitem{GIM24} {\sc{T. ~Gateva--Ivanova, S. ~Majid}}, {\em{Quadratic algebras and idempotent braided sets}},
arXiv preprint arXiv:2409.02939 (2024).

\bibitem{GIVB}
{\sc{T.~Gateva-Ivanova,  M.~Van den Bergh}}, {\em{Semigroups of
$I$-type}}, J.\ Algebra {\bf{206}} (1998) ~97--112.



\bibitem{GrHVendramin11}
{\sc{M. ~Grana,  I. ~Heckenberger and L. ~Vendramin}}, {\em{ Nichols algebras of group type with many quadratic relations}}, Adv. in Math.  {\bf{227}} (2011)~ 1956--1989.

\bibitem{GuaVendramin17}
{\sc{L. ~Guarnieri  and L. ~Vendramin}}, {\em{ Skew braces and the Yang-Baxter equation}}, Math. Comp.  86 {\bf{307}} (2017)~ 2519--2534.

\bibitem{JespersKuAntwerp}
 {\sc{L
E. ~Jespers, L. ~ Kubat  and A. ~ Van Antwerpen}},  {\em{ The structure monoid and algebra of a nondegenerate set-theoretic solution of the Yang–Baxter equation}},  Transactions of the American Mathematical Society \textbf{372.10} (2019): ~7191--7223.

\bibitem{LeVendramin18}
{\sc{V. ~Lebed  and L. ~Vendramin}}, {\em{ On structure groups of set-theoretic solutions to the Yang-Baxter equation}},
Proceedings of the Edinburgh Mathematical Society, \textbf{62} (3),(2018) ~683--717.


\bibitem{LYZ}
{\sc{J.~Lu, M.~Yan, Y.~Zhu}}, {\em{ On the set-theoretical
Yang-Baxter
  equation}}, Duke Math. J. {\bf 104} (2000) ~1--18.

\bibitem{MajidQG}
{\sc{S.~Majid}}, {\em{ Foundations of the Quantum Groups}},
Cambridge University Press, 1995.



\bibitem{Maninpaper}
{\sc{Yu.~I. Manin}}, {\em{ Some remarks on Koszul algebras and
quantum groups}}, Ann. de L'Inst. Fourier, {\bf 37} (1987) ~191--205.

\bibitem{Maninpreprint}
{\sc{Yu.~I. Manin}}, {\em{Quantum groups and non-commutative
geometry}}, Les publications CRM, Universit\'{e} de Montreal  (1988).


\bibitem{Nel03}
{\sc{S.~Nelson}}, {\em{Classification of finite Alexander quandles}}, Proceedings of the Sping Topology and Dynamical Systems Conference.Topology Proc.
 {\bf 27} (2003) ~245--258.

\bibitem{PP}
{\sc{A.~Polishchuk, L. Positselski}}, {\em{  Quadratic Algebras}},
University Lecture Series, \textbf{v.37 } American Mathematical Society, 2005.


\bibitem{priddy}
{\sc{St.~Priddy}}, {\em{  Koszul resolutions}}, Trans. Amer. Math. Soc.
 {\bf 152} (1970) ~39--60.


\bibitem{RTF}
{\sc{~ N.Yu.~ Reshetikhin, L.A.~ Takhtadzhyan, L.D. ~Faddeev}},
{\em{Quantization of Lie groups and Lie algebras}}, Algebra i
Analiz 1 (1989) 178-206 (in Russian); English translation in:
Leningrad Math. J. 1 (1990) ~193--225.


\bibitem{Rump}
{\sc{W. ~ Rump}}, {\em{Classification of cyclic braces, II}}, Transactions of the American Mathematical Society {\bf 372.1},  (2019): ~305--328.

\bibitem{SmokVendr18}
{\sc{A. ~Smoktunowicz and L. ~Vendramin}}, {\em{On skew braces (with an appendix by N. Byott and L. ~Vendramin}}, J. Comb. Algebra  2 {\bf{1}} (2018)~ 47--86.

\bibitem{Soloviev}
{\sc{A. ~Soloviev}}, {\em{Non-unitary set-theoretic solutions to the quantum Yang-Baxter equation}}, Math. Res. Lett.,   7{\bf{5-6}} (2000)~ 557--596.

\bibitem{Takeuchi}
{\sc{M.~Takeuchi}}, {\em{ Survey on matched pairs of groups. An
elementary approach to the ESS-LYZ theory}}, Banach Center Publ. {\bf
61} (2003) ~305--331.

\bibitem{V16}
{\sc{L. ~Vendramin}}, {\em{Extensions of set-theoretic solutions of the Yang-Baxter equation and a conjecture of Gateva-Ivanova}},
J. Pure Appl. Algebra, {\bf 220} (2016) 2064--2076 .
\end{thebibliography}
\end{document}